\newcommand{\ch}{\mathrm{char}}
\newcommand{\rmB}{\mathrm{B}}
\newcommand{\rmH}{\mathrm{H}}
\newcommand{\rmS}{\mathrm{S}}
\newcommand{\rmC}{\mathrm{C}}
\newcommand{\rmK}{\mathrm{K}}
\newcommand{\rmQ}{\mathrm{Q}}
\newcommand{\rme}{\mathrm{e}}
\newcommand{\bbF}{\mathbb{F}}
\newcommand{\bbP}{\mathbb{P}}
\newcommand{\bbS}{\mathbb{S}}
\newcommand{\bbZ}{\mathbb{Z}}
\newcommand{\modmod}{/\!\!/}
\newcommand{\bbSp}{\bbS\modmod p}
\newcommand{\bbSt}{\bbS\modmod 2}
\newcommand{\calE}{\mathcal{E}}
\newcommand{\calS}{\mathcal{S}}
\newcommand{\UmodO}{\mathrm{U}/\mathrm{O}}
\newcommand{\bo}{\mathrm{bo}}
\newcommand{\BO}{\mathrm{BO}}
\newcommand{\moravak}{\mathrm{k}}
\newcommand{\K}{\mathrm{K}}
\newcommand{\M}{\mathrm{M}}
\newcommand{\MO}{\mathrm{MO}}
\newcommand{\GL}{\mathrm{GL}}
\newcommand{\ev}{\mathrm{ev}}
\newcommand{\cof}{\mathrm{cof}}
\newcommand{\Tor}{\mathrm{Tor}}
\newcommand{\THH}{\mathrm{THH}}
\newcommand{\TAQ}{\mathrm{TAQ}}
\newtheorem{proposition}{Proposition}[section]
\newtheorem{theorem}[proposition]{Theorem}
\newtheorem{corollary}[proposition]{Corollary}
\theoremstyle{definition}
\newtheorem{remark}[proposition]{Remark}
\newtheorem{definition}[proposition]{Definition}
\newtheorem{example}[proposition]{Example}
\numberwithin{equation}{section}
\begin{document}

\title{Commutative~\texorpdfstring{$\bbS$}{S}-algebras of prime characteristics\\ and applications to unoriented bordism}

\author{Markus Szymik}

\date{February 2014}

\maketitle


\renewcommand{\abstractname}{}

\begin{abstract}\noindent
  The notion of highly structured ring spectra of prime characteristic is made precise and is studied via the versal examples~$\bbSp$ for prime numbers~$p$. These can be realized as Thom spectra, and therefore relate to other Thom spectra such as the unoriented bordism spectrum~$\MO$. We compute the Hochschild and Andr\'e-Quillen invariants of the~$\bbSp$. Among other applications, we show that~$\bbSp$ is not a commutative algebra over the Eilenberg-Mac Lane spectrum~$\rmH\bbF_p$, although the converse is clearly true, and that~$\MO$ is not a polynomial algebra over~$\bbSt$. 
  
  \phantom{}
  
  \noindent2010 MSC:
  55P43 (primary), 
  13A35, 
  55P20, 
  55P42 (secondary). 
  
\end{abstract}


\section*{Introduction}

In the world of ordinary rings, those of prime characteristic are of special importance, and their algebras encode much of the elementary arithmetic of the ring~$\bbZ$ of integers. Let us recall: If~$A$ is a commutative ring, and~$p$ is a prime number, then~$A$ has characteristic~$p$~(written~$\ch(A)=p$) if  the image of~$p$ under the unit~$\bbZ\to A$ is zero. Equivalently, there is a unique ring map~\hbox{$\bbF_p\to A$} from the prime field~$\bbF_p$ with~$p$ elements. The aim of this writing is to generalize and explore this notion of prime characteristic from the setting of ordinary commutative rings in the context of commutative ring spectra in the highly structured sense:~$E_\infty$ ring spectra, or commutative~$\bbS$-algebras in more recent terminology. This can be understood as an attempt to unveil some of the arithmetic surrounding the sphere spectrum~$\bbS$.

There might be more than one way to achieve such a generalization. Clearly, the underlying ring~$\pi_0(E)$ of a commutative~$\bbS$-algebra~$E$ of characteristic~$p$ should be of characteristic~$p$ (in the ordinary sense), and the Eilenberg-Mac Lane spectrum~$\rmH A$ of an ordinary ring~$A$ of characteristic~$p$ should be a commutative~$\bbS$-algebra of characteristic~$p$. This is fulfilled in the present approach. 

If~$E$ is a commutative ring spectrum (up to homotopy or~$E_\infty$), then we will say that~$E$ has characteristic~$p$ and write~$\ch(E)=p$ if~$p$ is mapped to zero under the ring map~\hbox{$\bbZ=\pi_0(\bbS)\to\pi_0(E)$} induced by the unit~$\bbS\to E$ from the sphere spectrum~$\bbS$. In Section~\ref{sec:first_examples} we briefly review the known results about commutative ring spectra of prime characteristic. In Section~\ref{sec:SSp} we define versal examples---commutative~$\bbS$-algebras~$\bbSp$~(one for each prime number~$p$) such that the category of commutative~$\bbSp$-algebras is an appropriate place to study commutative~$\bbS$-algebras of characteristic~$p$. Section~\ref{sec:homology_and_homotopy} contains some homology and homotopy computations which are necessary for the later applications. Section~\ref{sec:Thom} contains a description of all the versal examples~$\bbSp$ as~$E_\infty$ Thom spectra, see Theorems~\ref{thm:Thom_2} and~\ref{thm:Thom_odd}. This relates the spectra~$\bbSp$ to other Thom spectra, and it also enables us to describe the topological Hochschild and Andr\'e-Quillen invariants of the spectra~$\bbSp$. 

The final Section~\ref{sec:applications} contains various applications with an emphasis on the unoriented bordism spectrum~$\MO$. While~$\MO$ is not an algebra under the Eilenberg-Mac Lane spectrum~$\rmH\bbF_2$, it is an algebra under~$\bbSt$. This in turn implies that~$\bbSt$ is not an~$\rmH\bbF_2$-algebra. More generally, we are able to show that~$\bbSp$ is not an~$\rmH\bbF_p$-algebra for any prime~$p$, see Theorem~\ref{thm:no Eoo map}, although the converse is clearly true. It is shown in~\cite{Szymik} that higher bordism spectra, such as Spin and String bordism, can be treated analogously, once one is willing to work chromatically, and once one has set up a theory of characteristics in that context. 


\section*{Conventions}

Throughout the text, commutative~$\bbS$-algebras will be used as the chosen model for ring spectra with an~$E_\infty$ multiplication, see~\cite{EKMM}, in particular Chapter~VII. The category of~$\bbS$-modules has a Quillen model structure such that all objects are fibrant. The cofibrations are the retracts of relative cell~$\bbS$-modules. If~$R$ is a commutative~$\bbS$-algebra~(such as~$R=\bbS$ and later~$R=\bbSp$), then the category of commutative~$R$-algebras has a model structure where the equivalences and fibrations are created on underlying spectra. The cofibrations are the retracts of relative cell commutative~$R$-algebras. This has the effect that~$R$ is always cofibrant as a commutative~$R$-algebra, even if is is not cofibrant as a spectrum.

There are by now various other models for structured ring spectra, most of them discussed and shown to be equivalent in~\cite{MMSS}, and each of them serves our purposes equally well. We will also continue to employ the more generic~$E_\infty$ terminology to emphasize this fact. The notation~$\calE_\infty$, and~$\calS_\infty$, will be used for the category of~$\calE_\infty$ ring spectra/commutative~$\bbS$-algebras, and the category of~$\bbS$-modules/spectra, respectively. We write~$\bbS$ for the sphere spectrum as a commutative~$\bbS$-algebra, and~$\rmS^n=\Sigma^n\bbS$ for the suspension spectra. The notation~$\rmS^n$ will also be used for the usual euclidean spheres. We will sometimes abbreviate~$\rmH\bbF_p$ to~$\rmH$ when the prime is clear from the context. Also, unless otherwise specified, all rings, algebras, ring spectra, and algebra spectra are assumed to be commutative from now on. While this will be our default, we may nevertheless use the word~`commutative' for emphasis.


\section{Examples and counterexamples}\label{sec:first_examples}

In this section, we will recall some known results on commutative ring spectra~$E$ with~$\ch(E)=p$ in the following sense.

\begin{definition}
If~$E$ is a commutative ring spectrum (up to homotopy or~$E_\infty$), and~$p$ is a prime number, then we will say that~$E$ has characteristic~$p$, if the ordinary commutative ring~$\pi_0(E)$ has characteristic~$p$ in the usual sense. We will also use the notation~\hbox{$\ch(E)=p$}.
\end{definition}

\begin{remark}
In the~$E_\infty$ setting, it may be worthwhile noting that any cofibrant replacement of a commutative~$\bbS$-algebra of characteristic~$p$ has characteristic~$p$ as well: An equivalence~$E^\cof\to E$ of commutative~$\bbS$-algebras is an equivalence of underlying spectra, so that~$p\in\pi_0\bbS$ goes to~$0\in\pi_0E^\cof$ under the unit~$\bbS\to E^\cof$ if and only if it does so in~$\pi_0E$.
\end{remark}

We can now discuss some examples and counterexamples: graded Eilenberg-Mac Lane spectra, Morava K-theory spectra, and Moore spectra. See also~\cite{rudyak} for a treatment of some of the
topics discussed here.

\subsection{Graded Eilenberg-Mac Lane spectra}

We will say that a spectrum~$E$ is {\it additively a graded
  Eilenberg-Mac Lane spectrum} if it is equivalent as a spectrum to
$\rmH M_*$ for some graded group~$M_*$. We will say that a ring
spectrum~$E$ (up to homotopy) is {\it multiplicatively a graded
  Eilenberg-Mac Lane spectrum} if it is equivalent as a commutative ring spectrum
(up to homotopy) to~$\rmH A_*$ for some graded commutative ring~$A_*$.

Note that~$\rmH A_*$ is not only a ring spectrum up to homotopy, but has a preferred~$E_\infty$ structure, Boardman's multiplication~\cite{boardman}. In fact, Richter has shown more generally that for a differential graded commutative algebra~$A_*$, the graded Eilenberg-MacLane spectrum~$\rmH A_*$ is an~$E_\infty$-monoid in the category of~$\rmH\bbZ$-module spectra. See~\cite[Proposition~6.1]{Richter:Symmetry}, and~\cite[Theorem 5.6.1]{Richter:Homotopy}. Here, we only need the case where the differential is trivial, so that we have a graded commutative ring. Then the forgetful functor from~$\rmH\bbZ$-modules to~$\bbS$-modules gives rise to an~$E_\infty$ ring spectrum~$\rmH A_*$ that is a commutative~$\rmH A_0$-algebra. This~$E_\infty$ structure is essentially unique if~$A_*$ is concentrated in dimension~$0$, so that~$\rmH A_*$ is discrete. We will see, in Theorem~\ref{thm:non-uniqueness-for-structures}, that it is not unique in general.

\subsection{Results for the mod-\texorpdfstring{$p$}{p} case}

Let us start with a result which reduces the more difficult
multiplicative question to the easier additive question.

\begin{theorem} {\upshape(\cite[Theorem 1.1]{boardman})}
\label{thm:additive_implies_multiplicative}
  Suppose~$E$ is a commutative ring spectrum (up to homotopy) which additively is
  a graded Eilenberg-Mac Lane spectrum, and~$\ch(E)=p$ for some prime number~$p$. Then~$E$
  is multiplicatively a graded Eilenberg-Mac Lane spectrum (up to
  homotopy).
\end{theorem}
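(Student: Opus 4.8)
The plan is to promote the given additive equivalence $E\simeq\rmH M_*$ to a multiplicative one, by first linearising everything over $\bbF_p$ and then recognising $E$ as a ring spectrum under $\rmH\bbF_p$. Since $\ch(E)=p$, the commutative ring $\pi_0(E)$ is an $\bbF_p$-algebra, and $A_*:=\pi_*(E)$, being a module over it, is a graded-commutative $\bbF_p$-algebra; hence $\rmH M_*$, and with it $E$, splits additively as $\bigvee_\alpha\Sigma^{n_\alpha}\rmH\bbF_p$, so in particular $E$ is additively an $\rmH\bbF_p$-module spectrum. It follows that the $\pi_*(\bbS)$-action on $\pi_*(E)$ factors through $\pi_*(\rmH\bbF_p)=\bbF_p$, so the unit $\bbS\to E$ feels only the $\bbF_p$-algebra $A_*$; this also reassures us that classes like $\eta$ act as zero on $\pi_*(E)$, as they must.

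The reduction step is the claim that it suffices to produce a map of commutative ring spectra up to homotopy $\iota\colon\rmH\bbF_p\to E$; such an $\iota$ is automatically unit-compatible, because $[\bbS,E]\cong\pi_0(E)$ forces the composite $\bbS\to\rmH\bbF_p\to E$ to be the unit of $E$. Given $\iota$, the spectrum $E$ becomes a homotopy $\rmH\bbF_p$-algebra and its multiplication factors through $E\wedge_{\rmH\bbF_p}E$. As $\pi_*\rmH\bbF_p=\bbF_p$ is a graded field, the homotopy category of $\rmH\bbF_p$-modules is equivalent, via $\pi_*$, to graded $\bbF_p$-vector spaces, so $E\simeq\rmH A_*$ as $\rmH\bbF_p$-modules; and since $\Tor$ over $\bbF_p$ vanishes, $\rmH A_*\wedge_{\rmH\bbF_p}\rmH A_*\simeq\rmH(A_*\otimes_{\bbF_p}A_*)$, so an $\rmH\bbF_p$-linear map out of it to $\rmH A_*$ is precisely a graded $\bbF_p$-bilinear map $A_*\otimes_{\bbF_p}A_*\to A_*$. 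Hence the homotopy $\rmH\bbF_p$-algebra structure on $E$ is determined by the graded ring $A_*$; the Boardman ring spectrum $\rmH A_*$ realises that structure, and so $E\simeq\rmH A_*$ as ring spectra up to homotopy, as wanted.

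To build $\iota$, start from the additive splitting and let $\iota_0\colon\rmH\bbF_p\to E$ be the inclusion of a wedge summand hitting $1\in\pi_0(E)$; it is unital for free, and one tries to correct it to a multiplicative map by obstruction theory, the failure of multiplicativity lying in $[\rmH\bbF_p\wedge\rmH\bbF_p,E]$, the successive indeterminacies in $[\rmH\bbF_p,E]$, with the usual coboundary relations of the cobar complex of $\rmH\bbF_p$ with coefficients in $E$. The hypothesis controls these groups: $\rmH_*(E;\bbF_p)\cong\bigoplus_\alpha\Sigma^{n_\alpha}\mathcal{A}_*\cong\mathcal{A}_*\otimes_{\bbF_p}A_*$ is a cofree, hence injective, comodule over the dual Steenrod algebra $\mathcal{A}_*$ --- this being the comodule translation of $E$ being additively a wedge of copies of $\rmH\bbF_p$ --- so the $\rmH\bbF_p$-based Adams spectral sequence for $[X,E]$ collapses to $\mathrm{Hom}_{\bbF_p}(\rmH_*(X;\bbF_p),A_*)$ in filtration zero.

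The main obstacle is exactly here. Because the Adams filtration has collapsed, there is no degree reason for the multiplicativity obstruction of $\iota_0$ to vanish: the ambient $\mathrm{Hom}_{\bbF_p}(\mathcal{A}_*^{\otimes\bullet},A_*)$-type groups are huge, of the size of the Steenrod algebra. So one genuinely has to analyse how the multiplication of $E$ interacts with the coalgebra structure of $\rmH\bbF_p$ and produce the correcting cochain, which is the technical core of Boardman's argument. The same point explains why the hypothesis is essential, and not merely convenient: for a general commutative $\bbS$-algebra of characteristic $p$, such as $\bbSp$, the homology $\rmH_*(\bbSp;\bbF_p)$ is not cofree over $\mathcal{A}_*$, and, as Theorem~\ref{thm:non-uniqueness-for-structures} will show, the conclusion genuinely fails once finer structure is remembered.
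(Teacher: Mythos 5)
The paper gives no proof of this statement at all --- it is quoted as \cite[Theorem~1.1]{boardman} --- so your argument has to stand on its own, and as written it does not: it is a strategy whose decisive step is explicitly omitted. The content of the theorem is exactly the construction of a multiplicative-up-to-homotopy map $\iota\colon\rmH\bbF_p\to E$ (equivalently, the straightening of the product on the wedge of suspensions of $\rmH\bbF_p$). You set this up correctly --- the failure of multiplicativity of the summand inclusion $\iota_0$ lies in $[\rmH\bbF_p\wedge\rmH\bbF_p,E]$, to be corrected by elements of $[\rmH\bbF_p,E]$, and both groups are of Steenrod-algebra size with no degree or Adams-filtration reason for vanishing --- and then you defer the resolution to ``the technical core of Boardman's argument.'' That core is the theorem; nothing in your reductions makes it appear, because the hypotheses have so far been used only additively (to split $E$ into suspensions of $\rmH\bbF_p$ and to collapse the Adams spectral sequence), while some genuinely multiplicative use of them is unavoidable: by \cite[Theorem~1.3]{boardman}, also quoted in the paper, the conclusion fails once the characteristic-$p$ hypothesis is dropped.

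The reduction step itself is also not rigorous at the level of structure you actually have. From a map of ring spectra up to homotopy you cannot form $E\wedge_{\rmH\bbF_p}E$ (a homotopy-level action gives no coherence for the bar construction, and the homotopy category lacks the required colimits), and the asserted identification of homotopy $\rmH\bbF_p$-modules with graded $\bbF_p$-vector spaces is false naively: $[\rmH\bbF_p,\rmH\bbF_p]$ is the whole Steenrod algebra, and ``bilinearity up to homotopy'' leaves precisely Steenrod-operation indeterminacy in $[\rmH A_*\wedge\rmH A_*,\rmH A_*]$ --- the same difficulty you postponed in constructing $\iota$. The clean form of your reduction requires an honestly structured map (say $A_\infty$ over $\rmH\bbF_p$), which is strictly more than the theorem's hypotheses provide and cannot be obtained by fiat: the paper's Corollary~\ref{cor:no Eoo map_p=2} and Theorem~\ref{thm:no Eoo map} show that at the $E_\infty$ level such maps can fail to exist for spectra satisfying exactly these hypotheses (e.g.\ $\bbSp$), and the weaker structured maps of Remark~\ref{rem:E2} are obtained there only from a special Thom-spectrum presentation, not from the hypotheses of the theorem. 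So both halves of your outline still contain the unproven content, and the proposal does not constitute a proof.
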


The following result shows that in the case~$p=2$ the Eilenberg-Mac Lane hypo\-thesis is superfluous: there are no other examples.

\begin{theorem}{\upshape(\cite[Theorem 1.1]{würgler},
  \cite{pazhitnov+rudyak})}\label{thm:mod2isEM}
  Suppose~$E$ is a commutative ring spectrum (up to homotopy) with~$\ch(E)=2$. Then~$E$ is multiplicatively an
  Eilenberg-Mac Lane spectrum (up to homotopy).
\end{theorem}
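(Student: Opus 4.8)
The plan is to reduce the claim to Theorem~\ref{thm:additive_implies_multiplicative} by showing that any commutative ring spectrum $E$ (up to homotopy) with $\ch(E)=2$ is \emph{additively} a graded Eilenberg-Mac Lane spectrum, i.e.\ splits as a wedge of suspensions of $\rmH\bbF_2$. The input is that the unit $\bbS\to E$ factors through the mod-$2$ Moore spectrum $\bbS/2$, since $2=0$ in $\pi_0(E)$; more precisely, multiplication by $2$ is null on $E$, because it is null already on $\pi_0(E)$ and hence (being the action of the element $2\in\pi_0\bbS$ on the module spectrum $E$) null on all of $E$. So $E$ is a module spectrum over $\bbS/2$ in the homotopy category, and therefore also over the endomorphism ring, but the cleaner route is: $E\wedge\bbS/2\simeq E\vee\Sigma E$, and more to the point $H\bbF_2\wedge E$ is a module over $H\bbF_2\wedge\bbS/2$.

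The key homotopy-theoretic fact to invoke is that $\bbS/2$ admits no unital multiplication (Araki--Toda; the obstruction is $\eta$, equivalently $\mathrm{Sq}^1\mathrm{Sq}^1\neq 0$-type reasoning on the level of $v_1$), but that is not quite what is needed. What is needed is the structure of the mod-$2$ homology $H_*(E;\bbF_2)$ as a module over the dual Steenrod algebra $\mathcal A_*$. First I would observe that since $2$ annihilates $E$, the Bockstein $\beta$ acts trivially on $H^*(E;\bbF_2)$—indeed $\beta$ is, up to the relevant identifications, the boundary in the long exact sequence associated to $2\colon E\to E$, which is null. Hence $\mathrm{Sq}^1$ acts as zero on $H^*(E;\bbF_2)$. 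One then bootstraps: using that $H^*(E;\bbF_2)$ is a module over the Steenrod algebra $\mathcal A$ on which $\mathrm{Sq}^1$ vanishes, together with the ring structure (the class $1\in H^0$ and Cartan), one forces $H^*(E;\bbF_2)$ to be free over $\bbF_2$ with trivial $\mathcal A$-action in a way compatible with a wedge decomposition—this is exactly the argument of Würgler and of Pazhitnov--Rudyak, and the crux is a connectivity/induction argument killing the possibility of any nonzero higher Steenrod operation on a generator, which would contradict $\ch(E)=2$ after suspending to make things connective and comparing with $H^*(\bbS/2)$.

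Concretely, the steps I would carry out are: (1) reduce to $E$ connective by passing to connective covers, which preserves the ring structure up to homotopy and the characteristic; (2) show $\mathrm{Sq}^1=0$ on $H^*(E;\bbF_2)$ from nullity of $2\colon E\to E$; (3) pick $\bbF_2$-basis elements of $H_*(E;\bbF_2)$ in each degree and lift them to maps $\Sigma^n\rmH\bbF_2\to E$—here one uses that the unit $\bbS\to E$ lifts to a map $\bbS/2\to E$ (possible since $2$ is null on $E$), and then that $\rmH\bbF_2$ is built from $\bbS/2$ by attaching higher cells whose attaching maps die in $E$ by an inductive Steenrod-operation-vanishing argument—assembling these gives a map $\bigvee_i \Sigma^{n_i}\rmH\bbF_2\to E$; (4) check it is a $\pi_*$-iso, hence an equivalence of spectra; (5) apply Theorem~\ref{thm:additive_implies_multiplicative} to upgrade to a multiplicative equivalence with some $\rmH A_*$. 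The main obstacle is step~(3): showing inductively that all the higher attaching maps of $\rmH\bbF_2$ (built cell-by-cell from $\bbS/2$) become null in $E$. This is where $\ch(E)=2$ is used in an essential, non-formal way—the argument is precisely the content of the cited papers of Würgler and Pazhitnov--Rudyak, and I would not expect a shortcut around their Steenrod-algebra bookkeeping.
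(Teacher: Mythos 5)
Your outline fails at its central step, and in a way that points in exactly the wrong direction. The claim in step (2) that $2\cdot\mathrm{id}_E\simeq 0$ forces the Bockstein $\mathrm{Sq}^1$ to act trivially on $\rmH^*(E;\bbF_2)$ confuses two different maps: the Bockstein is the coefficient operation coming from the cofibre sequence $\rmH\bbZ\to\rmH\bbZ\to\rmH\bbF_2$ (equivalently $\bbZ/2\to\bbZ/4\to\bbZ/2$), not the boundary of the cofibre sequence for multiplication by $2$ on the spectrum $E$ itself. The spectrum $E=\rmH\bbF_2$ already refutes the implication: $2\cdot\mathrm{id}\simeq 0$ there, yet $\mathrm{Sq}^1$ acts nontrivially on $\rmH^*(\rmH\bbF_2;\bbF_2)=\mathcal{A}$; likewise Example~\ref{ex:homology_Bockstein} of the paper shows the homology Bockstein of $\bbSp$ is an isomorphism from degree $1$ to degree $0$ even though $p\simeq 0$ there. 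Worse, your intermediate goal ``$\rmH^*(E;\bbF_2)$ free over $\bbF_2$ with trivial $\mathcal{A}$-action'' is incompatible with the conclusion you are trying to prove: a nontrivial wedge of suspensions of $\rmH\bbF_2$ has mod~$2$ cohomology \emph{free} as an $\mathcal{A}$-module, on which $\mathrm{Sq}^1$ acts freely. So step (3), the inductive ``Steenrod-operation-vanishing'' extension of $\rmS^0/2\to E$ over the cells of $\rmH\bbF_2$, is built on a false premise; the genuine content one needs is the opposite kind of statement (freeness over $\mathcal{A}$, or an $\rmH\bbF_2$-module structure on $E$), and the nonformal input in the structured-ring proofs is that $2\simeq 0$ kills $\eta$ and the higher attaching data via power operations, respectively Mahowald's identification of $\rmH\bbF_2$ as the Thom spectrum of $\Omega^2\rmS^3\to\rmB\GL_1(\bbS)$ --- neither of which is available formally for a ring spectrum that is only commutative up to homotopy. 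Your step (1) is also shaky: the theorem has no connectivity hypothesis, and $E$ is not recovered from its connective cover, so ``reduce to $E$ connective'' does not obviously reduce anything.

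For calibration: the paper does not prove this theorem at all; it quotes it from W\"urgler and from Pazhitnov--Rudyak (with unpublished Hopkins--Mahowald versions mentioned), so deferring the crux to those papers is legitimate. But the concrete assertions you do make (Bockstein vanishing, trivial $\mathcal{A}$-action) are not ``exactly the argument of W\"urgler and of Pazhitnov--Rudyak''; they are incorrect statements that would block any proof along your lines. What survives of your sketch is the sound observation that $2\cdot\mathrm{id}_E\simeq 0$, hence $E$ is a retract of $E\wedge\rmS^0/2$, and the correct overall strategy of producing an additive splitting into suspensions of $\rmH\bbF_2$ and then invoking Theorem~\ref{thm:additive_implies_multiplicative}.
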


Versions of this theorem are also attributed to unpublished work of Hopkins and Mahowald, see~\cite[Theorem 5]{yan:proc},~\cite[Theorem 5.1]{yan:trans} as well as~\cite[Theo\-rem~IX.5.5]{rudyak}, for example.

As mentioned above, we will see later, in Theorem~\ref{thm:non-uniqueness-for-structures}, that both theorems become false when the weak up to homotopy notion is replaced by the strong~$E_\infty$ notion of a ring spectrum. In this latter setting, using Dyer-Lashof operations, Steinberger has obtained the following result.

\begin{theorem}{\upshape(\cite[III.4.1]{steinberger})}\label{thm:steinberger:splitting}
  If~$E$ is a commutative~$\bbS$-algebra of finite type with~\hbox{$\pi_0(E)=\bbF_p$} for some prime number~$p$, then~$E$ is
  additively a graded Eilenberg-Mac Lane spectrum.
\end{theorem}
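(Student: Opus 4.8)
Since $E$ is a commutative $\bbS$-algebra, $\pi_*(E)$ is a graded ring with $\pi_0(E)=\bbF_p$, so $p$ acts as zero on $\pi_*(E)$ and every homotopy group is an $\bbF_p$-vector space, finite-dimensional by the finite-type hypothesis. In particular ``additively a graded Eilenberg--Mac Lane spectrum'' means here exactly that $E$ is equivalent, as a spectrum, to a wedge of suspensions of $\rmH\bbF_p$. I would invoke the standard recognition principle: for a bounded-below spectrum $X$ of finite type whose homotopy groups are $\bbF_p$-vector spaces, $X$ is such a wedge if and only if $H^*(X;\bbF_p)$ is a free module over the Steenrod algebra $\mathcal{A}$ (equivalently $H_*(X;\bbF_p)$ is an extended comodule over the dual Steenrod algebra $\mathcal{A}_*$). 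Indeed, choosing homogeneous $\mathcal{A}$-module generators $g_i\in H^{n_i}(X;\bbF_p)$ and viewing them as maps $X\to\Sigma^{n_i}\rmH\bbF_p$, one assembles a map $\Phi\colon X\to\prod_i\Sigma^{n_i}\rmH\bbF_p$ which, by freeness and finite type, is an isomorphism on $\bbF_p$-cohomology; its fibre is then bounded below, $\bbF_p$-acyclic, and has $\bbF_p$-vector-space homotopy, hence contractible. So the entire theorem reduces to proving that $H^*(E;\bbF_p)$ is free over $\mathcal{A}$.

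\textbf{Using the two hypotheses.}
The $E_\infty$ structure enters through Dyer--Lashof operations: $\rmH\bbF_p\wedge E$ is a commutative $\rmH\bbF_p$-algebra, so $H_*(E;\bbF_p)=\pi_*(\rmH\bbF_p\wedge E)$ is a commutative $\bbF_p$-algebra equipped with operations $Q^s$ (and, for $p$ odd, $\beta Q^s$) satisfying the Cartan formula and Adem relations, and interacting with the $\mathcal{A}_*$-coaction through the Nishida relations. The hypothesis $\pi_0(E)=\bbF_p$ enters as follows: the composite of the degree-$p$ map $\bbS\xrightarrow{p}\bbS$ with the unit $\bbS\to E$ represents $p\cdot 1=0$ in $\pi_0(E)=\bbF_p$, hence is null, so the unit extends along $\bbS\to\bbS/p$ to a map $v\colon\bbS/p\to E$ from the mod-$p$ Moore spectrum. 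Let $a\in H_1(E;\bbF_p)$ be the image under $v$ of the fundamental class of the top cell of $\bbS/p$. Naturality of the Bockstein shows $a\neq 0$, since its Bockstein is the image of the generator of $H_0(E;\bbF_p)$; naturality of the coaction shows $a$ is primitive over the bottom class, i.e. $\psi(a)=1\otimes a+\xi_1\otimes 1$ for $p=2$ and $\psi(a)=1\otimes a+\tau_0\otimes 1$ for $p$ odd. Equivalently, $a$ maps to the degree-one generator of $\mathcal{A}_*=H_*(\rmH\bbF_p;\bbF_p)$ under the Postnikov truncation $E\to\rmH\bbF_p$.

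\textbf{The Dyer--Lashof computation and the endgame.}
The point, following Steinberger, is that the Dyer--Lashof operations on the single class $a$ already reconstruct all of $\mathcal{A}_*$ inside $H_*(E;\bbF_p)$. Iterating $Q^{p^i}$ on $a$ (with the appropriate Bocksteins for $p$ odd) produces classes which, under $E\to\rmH\bbF_p$, hit further polynomial generators $\zeta_2,\zeta_3,\dots$ of $\mathcal{A}_*$ (and, at odd primes, the exterior generators $\tau_1,\tau_2,\dots$); using the Cartan formula, the instability relations and the explicit structure of $\mathcal{A}_*$ one checks these classes are algebraically independent in $H_*(E;\bbF_p)$, so the subalgebra they generate is a sub-comodule-algebra mapping isomorphically onto $\mathcal{A}_*$. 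Dually, $\mathcal{A}\to H^*(E;\bbF_p)$ is a split injection of $\mathcal{A}$-modules, giving $H^*(E;\bbF_p)\cong\mathcal{A}\oplus C$. To see $C$ is again free one argues by downward induction on the remaining $\mathcal{A}$-module generators: if a generator were annihilated by some Steenrod operation, pushing that relation through the Nishida relations and applying a suitable Dyer--Lashof operation would contradict the freeness already secured in lower degrees. The finite-type hypothesis keeps this induction locally finite, and we conclude $H^*(E;\bbF_p)$ is free over $\mathcal{A}$, whence $E\simeq\bigvee_i\Sigma^{n_i}\rmH\bbF_p$ by the first paragraph.

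\textbf{The main difficulty.}
The substance of the argument is entirely in the Dyer--Lashof bookkeeping of the third paragraph: showing that the iterated operations on the fundamental class are nonzero and algebraically independent and assemble into a copy of the dual Steenrod algebra, and that no Steenrod operation can annihilate a higher module generator. This is where the Nishida relations must be handled with care, and where the odd-primary case is genuinely more delicate, because of the interplay between the generators $\tau_i$ and $\xi_i$ and the Bockstein operations $\beta Q^s$. By comparison, the reduction to freeness over $\mathcal{A}$ and the passage back from homology to a splitting of $E$ are formal.
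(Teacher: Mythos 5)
First, a point of comparison: the paper does not prove this statement at all --- it is quoted verbatim from Bruner--May--McClure--Steinberger (III.4.1) --- so your proposal can only be measured against Steinberger's original argument. Your overall strategy does match it: reduce to showing that $H^*(E;\bbF_p)$ is free over the Steenrod algebra $\mathcal{A}$ (your first paragraph is fine, and the observation that $\pi_*(E)$ consists of $\bbF_p$-vector spaces is exactly what makes the ``$\bbF_p$-acyclic fibre is contractible'' step work); produce the class $a\in H_1(E;\bbF_p)$ from the extension of the unit over the Moore spectrum; and feed in Steinberger's computation that $\mathcal{A}_*$ is generated over the Dyer--Lashof algebra by $\xi_1$ (resp.\ $\tau_0$). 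One point you should make explicit: the Postnikov truncation $E\to\rmH\bbF_p$ is a map of $E_\infty$ (or at least $H_\infty$) ring spectra, which is what lets you compare Dyer--Lashof operations on the two sides; granting that, the genuine content you import from Steinberger is precisely that the induced map $H_*(E;\bbF_p)\to\mathcal{A}_*$ is \emph{surjective}.

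The genuine gap is your endgame, i.e.\ the passage from that surjectivity to freeness of $H^*(E;\bbF_p)$ over $\mathcal{A}$. Two steps there do not hold up as stated. (i) The subalgebra of $H_*(E;\bbF_p)$ generated by Dyer--Lashof classes lifting the $\xi_i$ (and $\tau_i$) need not be a sub-comodule mapping isomorphically onto $\mathcal{A}_*$: naturality only controls the coaction on a lift after projection to $\mathcal{A}_*\otimes\mathcal{A}_*$, so the coaction of such a lift can involve elements outside the subalgebra, and algebraic independence of the lifts is neither automatic nor needed. Consequently the claimed splitting $H^*(E;\bbF_p)\cong\mathcal{A}\oplus C$ of $\mathcal{A}$-modules is not justified. (ii) Even granting it, the ``downward induction'' for $C$ is not an argument: freeness over $\mathcal{A}$ is not equivalent to no generator being annihilated by a single Steenrod operation, and it is unclear what Dyer--Lashof operation one would apply to an element of an abstract complement $C$. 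The standard (and correct) way to finish uses none of this: $H^*(E;\bbF_p)$ is a connected $\mathcal{A}$-module coalgebra (the coproduct comes from the multiplication of $E$), and the map $\mathcal{A}\to H^*(E;\bbF_p)$, $\theta\mapsto\theta(u)$, on the truncation class $u\in H^0$ is injective --- this is exactly the dual of your surjectivity statement. The Milnor--Moore theorem on connected module coalgebras (the same lemma behind the classical splitting of $\MO$) then gives freeness of $H^*(E;\bbF_p)$ over $\mathcal{A}$ in one step, after which your first paragraph concludes. With that lemma substituted for your third-paragraph bookkeeping, your sketch becomes an accurate account of the cited proof.
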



\subsection{Results for the~\texorpdfstring{$p$}{p}-local case}

Let us add some results on the~$p$-local situation. These will not
be used in the following. For odd primes, the situation is fairly
rigid.

\begin{theorem}{\upshape(\cite[Theorem 1.2]{boardman})}
  Let~$p$ be an odd prime. Suppose the ring spectrum~$E$ is additively
  a graded Eilenberg-Mac Lane spectrum with~$\pi_*(E)$ a free module
  over~$\bbZ_{(p)}$. Then~$E$ is multiplicatively a graded
  Eilenberg-Mac Lane spectrum.
\end{theorem}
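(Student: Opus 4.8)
The plan is to show directly that the homotopy-ring structure carried by the underlying spectrum of $E$ agrees, up to equivalence, with Boardman's canonical multiplication on $\rmH A_*$, where $A_* := \pi_*(E)$. Since $A_*$ is free over $\bbZ_{(p)}$, the hypothesis already provides an additive equivalence $E \simeq \rmH A_* = \bigvee_\alpha \Sigma^{n_\alpha}\rmH\bbZ_{(p)}$ for a homogeneous basis $\{x_\alpha\}$ of $A_*$, so it is enough to produce a ring map $E \to \rmH A_*$ that is an isomorphism on $\pi_*$. First I would reduce to the bounded-below case: the general graded situation follows by applying the argument to connective covers and passing to the (co)limit, the only subtlety being a Milnor $\lim^1$ argument. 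Then I would run an obstruction theory up the Postnikov tower of $E$.

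In the connective case the truncations $\tau_{\le n}E$ are homotopy-commutative ring spectra with $\tau_{\le 0}E = \rmH A_0$, and each truncation map $\tau_{\le n}E \to \tau_{\le n-1}E$ is a ring map with fibre $\Sigma^n\rmH A_n$, which for $n \ge 1$ is a square-zero ideal because $\Sigma^n\rmH A_n \wedge \Sigma^n\rmH A_n$ is $2n$-connected. Assuming inductively that $\tau_{\le n-1}E$ has been identified with $\tau_{\le n-1}\rmH A_*$ carrying Boardman's product, there are two things to check at stage $n$: (i) that the $\tau_{\le n-1}\rmH A_*$-module structure on the fibre $\Sigma^n\rmH A_n$ is the standard one, pulled back along $\tau_{\le n-1}\rmH A_* \to \rmH A_0$ from the $A_0$-module $A_n$; and (ii) that the resulting square-zero extension is classified by the zero class, i.e.\ the truncation map admits a multiplicative section, so that $\tau_{\le n}E \simeq \tau_{\le n-1}\rmH A_* \ltimes \Sigma^n\rmH A_n = \tau_{\le n}\rmH A_*$.

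Claim (i) is cheap: two module structures inducing the same structure on $\pi_*$ differ by a class in the subgroup of $[\tau_{\le n-1}\rmH A_* \wedge \rmH A_n, \rmH A_n]$ that vanishes on $\pi_0$, and decomposing both smash factors into shifted copies of $\rmH\bbZ_{(p)}$ this subgroup is built out of $H^{>0}(\rmH\bbZ_{(p)} \wedge \rmH\bbZ_{(p)}; A_n)$ placed in non-positive total shift, so it vanishes in the connective range, while the remaining $\mathrm{Hom}$-type contributions die because $A_n$ is $\bbZ_{(p)}$-torsion-free (unitality having already pinned down the leading term). Claim (ii) is the heart of the matter. The obstruction lies in a topological Andr\'e--Quillen, equivalently Hochschild, cohomology group of $\tau_{\le n-1}\rmH A_*$ with coefficients in $\Sigma^n\rmH A_n$, which the wedge decomposition reduces to the corresponding invariants of $\rmH\bbZ_{(p)}$ itself, governed by $\THH(\rmH\bbZ_{(p)})$ and the cotangent complex $\TAQ(\rmH\bbZ_{(p)})$, evaluated against the torsion-free module $A_n$. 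Here, and only here, oddness of $p$ is used: at an odd prime the reduced part of these invariants of $\rmH\bbZ_{(p)}$ begins in degree $2p-1 \ge 5$ and carries exactly the multiplicative structure needed for the relevant obstruction groups to vanish against torsion-free coefficients, whereas at $p = 2$ the analogous class already appears in degree $3$ and the argument does not go through.

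Granting (i) and (ii), induction identifies each $\tau_{\le n}E$ with $\tau_{\le n}\rmH A_*$ compatibly with the truncation maps, up to a further Mittag--Leffler check that the chain of equivalences can be made compatible, and passing to the limit yields $E \simeq \rmH A_*$ as homotopy ring spectra. The main obstacle is thus isolating the precise obstruction group appearing in (ii) and establishing its vanishing at odd primes over torsion-free coefficients; the remainder is bookkeeping with the additive splitting and with $\bbZ_{(p)}$-freeness.
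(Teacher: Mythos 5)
The paper itself offers no proof of this statement: it is quoted verbatim from Boardman, \cite[Theorem 1.2]{boardman}, so the only fair comparison is with Boardman's argument, which stays entirely in the homotopy category. Boardman analyzes an arbitrary product map $\rmH A_*\wedge\rmH A_*\to\rmH A_*$ through the additive splitting of $\rmH A_*$ into suspensions of $\rmH\bbZ_{(p)}$ and the known structure of the groups $[\rmH A\wedge\rmH B,\rmH C]$ (whose ``non-principal'' part comes from the torsion in $\rmH\bbZ_{(p)}\wedge\rmH\bbZ_{(p)}$), and shows that for odd $p$ and $\bbZ_{(p)}$-free homotopy the correction terms can be absorbed by changing the additive equivalence; his Theorem 1.3 shows exactly this step fails at $p=2$.

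The genuine gap in your sketch is step (ii), and it is a category-level problem rather than a bookkeeping one. The classification of multiplicative Postnikov extensions by square-zero extensions, with obstruction groups expressed in terms of $\TAQ$ or $\THH$, is a theorem about structured ($A_\infty$ or $E_\infty$) ring spectra; here $E$ is only a commutative ring spectrum up to homotopy, with no coherence data, so neither the multiplicative truncation tower in a category of structured rings nor the identification of the extension-classifying group with a topological Andr\'e--Quillen or Hochschild group is available. This is precisely the distinction the present paper is at pains to draw: Theorem~\ref{thm:non-uniqueness-for-structures} shows that up-to-homotopy identifications with graded Eilenberg-Mac Lane ring spectra do \emph{not} upgrade to the structured setting, so an argument routed through structured obstruction theory is both assuming data you are not given and, where it does apply, proving a kind of statement that is false for closely related spectra such as $\bbSp$ and $\MO$. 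Relatedly, the numerical input you cite (first reduced class of $\THH(\rmH\bbZ_{(p)})$ in degree $2p-1$ versus $3$ at $p=2$) is not obviously the group governing up-to-homotopy multiplications; in Boardman's setting the decisive input is the torsion in $\rmH\bbZ_{(p)*}(\rmH\bbZ_{(p)})$ and its effect on $[\rmH A_*\wedge\rmH A_*,\rmH A_*]$, and the vanishing you need at odd primes against torsion-free coefficients is asserted rather than established. Until step (ii) is recast in terms that make sense for homotopy rings (or $E$ is endowed with an $A_\infty$ structure, which is not part of the hypothesis), the heart of the proof is missing.
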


For the even prime, there are exotic examples. 

\begin{theorem}{\upshape(\cite[Theorem 1.3]{boardman})} There exist
  ring spectra up to homotopy~$E$ with~$\pi_*(E)$ a
  free~$\bbZ_{(2)}$-module and which are additively graded
  Eilenberg-Mac Lane spectra, but not multiplicatively.
\end{theorem}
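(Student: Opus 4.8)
The hypotheses ``$E$ is additively a graded Eilenberg--Mac Lane spectrum'' and ``$\pi_*(E)$ is free over~$\bbZ_{(2)}$'' constrain only the underlying spectrum, so the plan is to fix once and for all a free graded $\bbZ_{(2)}$-module~$M_*$, keep the spectrum~$\rmH M_*$ fixed, and produce an exotic homotopy-commutative, homotopy-associative, unital multiplication on it --- one not equivalent, as a ring spectrum up to homotopy, to~$\rmH B_*$ with Boardman's multiplication for any graded commutative ring~$B_*$. The organizing observation is that for $B_*$ free over~$\bbZ_{(2)}$ the spectrum $\rmH B_*$ is precisely a homotopy $\rmH\bbZ_{(2)}$-algebra: its multiplication factors through the canonical map $\rmH B_*\wedge\rmH B_*\to\rmH B_*\wedge_{\rmH\bbZ_{(2)}}\rmH B_*\simeq\rmH(B_*\otimes_{\bbZ_{(2)}}B_*)$. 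A homotopy ring structure on~$\rmH M_*$ is therefore exotic exactly when its multiplication does not factor through the analogous projection, and the obstruction to such a factorization lives in the $\rmH\bbZ_{(2)}$-homology of~$\rmH M_*$, which at the prime~$2$ is built from the dual Steenrod algebra. The relevant classes are $2$-torsion, hence invisible rationally, consistent with $\pi_*(E)=M_*$ staying free over~$\bbZ_{(2)}$.

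First I would set up the classification of homotopy-associative, homotopy-commutative, normalized-unital multiplications on~$\rmH M_*$ by the usual deformation-theoretic obstruction theory: two such multiplications differ by a perturbation $d\colon\rmH M_*\wedge\rmH M_*\to\rmH M_*$ of degree~$0$ that is symmetric and vanishes along the unit, subject to a pentagon-cocycle condition, coboundaries producing equivalent multiplications. The point is that the group $[\rmH M_*\wedge\rmH M_*,\rmH M_*]$ in degree~$0$ receives, beyond the ``algebraic'' part $\mathrm{Hom}_{\bbZ_{(2)}}(M_*\otimes M_*,M_*)$, contributions from stable mod-$2$ cohomology operations; these are what make exotic perturbations available.

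Next I would exhibit a concrete example in the spirit of Boardman. Choose a small finitely generated free graded $\bbZ_{(2)}$-algebra~$M_*$ on a couple of generators, with degrees arranged so that a low-dimensional integral Steenrod operation --- the $2$-local lift of~$\mathrm{Sq}^2$ --- has room to connect an indecomposable with a decomposable, and set $\mu'=\mu+d$, where $\mu$ is Boardman's multiplication and~$d$ is the degree-$0$ class built from that operation, chosen symmetric and vanishing along the unit so that~$\mu'$ remains homotopy-commutative and strictly unital. Homotopy-associativity of~$\mu'$ amounts to a cocycle condition in a Hochschild-type group; here I expect to win either by a connectivity or degree count forcing the relevant obstruction group to vanish, or by the Cartan formula for the chosen operation. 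A final check that~$d$ is not a coboundary shows~$\mu'$ is genuinely new.

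The remaining, and main, task is to show that $(\rmH M_*,\mu')$ is not equivalent as a homotopy ring spectrum to any~$\rmH B_*$. For this I would use the mod-$2$ homology $H_*(-;\bbF_2)$ as an algebra over the dual Steenrod algebra~$\mathcal{A}_*$ as the separating invariant. For any~$\rmH B_*$ with $B_*$ free over~$\bbZ_{(2)}$ this invariant is forced into the split form $H_*(\rmH B_*;\bbF_2)\cong(B_*\otimes\bbF_2)\otimes H_*(\rmH\bbZ_{(2)};\bbF_2)$, free over $H_*(\rmH\bbZ_{(2)};\bbF_2)$ on $\mathcal{A}_*$-primitive generators that multiply among themselves as $B_*\otimes\bbF_2$ --- because the underlying $\rmH\bbZ_{(2)}$-module splits as a wedge of shifted copies of~$\rmH\bbZ_{(2)}$. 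The perturbation~$d$, however, forces on $H_*((\rmH M_*,\mu');\bbF_2)$ a Cartan-type relation expressing a product in terms of a nonzero Steenrod operation, which no system of primitive free generators can satisfy, and which no regrading or enlargement of~$B_*$ can absorb. The hard part is precisely making this last step airtight: one must compute how~$d$ alters the $\mathcal{A}_*$-comodule-algebra structure on $\bbF_2$-homology and argue that the resulting invariant lies outside the explicitly described family realized by the~$\rmH B_*$. Granting this, $(\rmH M_*,\mu')$ is additively but not multiplicatively a graded Eilenberg--Mac Lane spectrum, and $\pi_*=M_*$ is free over~$\bbZ_{(2)}$ by construction, which proves the theorem.
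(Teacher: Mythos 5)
This statement is quoted by the paper from Boardman (\cite[Theorem 1.3]{boardman}); the paper itself contains no proof, so your attempt has to stand on its own, and as written it does not yet constitute a proof. The overall strategy --- fix the underlying spectrum $\rmH M_*$ with $M_*$ free over $\bbZ_{(2)}$, perturb the Boardman multiplication by a torsion stable operation, and then show the perturbed homotopy ring spectrum is not equivalent to any $\rmH B_*$ --- is a reasonable outline, but every load-bearing step is left as an expectation rather than an argument. Concretely: (i) the proposed perturbing class does not exist as described. There is no ``$2$-local lift of $\mathrm{Sq}^2$'' as an integral operation of degree $2$: one has $[\rmH\bbZ_{(2)},\Sigma^2\rmH\bbZ_{(2)}]=0$, and the first nontrivial torsion integral operation is the degree-$3$ class $\beta\,\mathrm{Sq}^2\rho$ of order $2$. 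Any actual example must be built from such operations, and the degrees of the generators of $M_*$ must be chosen accordingly; you never pin down $M_*$, the degrees, or the map $d$. (ii) Homotopy associativity and commutativity of $\mu'=\mu+d$ is a genuine cocycle condition (the obstruction involves terms like $d(\mu\wedge 1)-d(1\wedge\mu)+\mu(d\wedge 1)-\mu(1\wedge d)$ and the quadratic terms in $d$), and ``a connectivity or degree count or the Cartan formula'' is not an argument --- this must be verified for the specific $d$ you choose, and it constrains the choice of $M_*$.

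(iii) Most importantly, the separating step is missing, and you say so yourself. To prove the theorem you must show that $(\rmH M_*,\mu')$ is not equivalent, as a ring spectrum up to homotopy, to $\rmH B_*$ for \emph{any} graded commutative ring $B_*$ and \emph{any} equivalence of spectra --- in particular equivalences that do not respect your chosen additive splitting into suspensions of $\rmH\bbZ_{(2)}$. Your proposed invariant, the $\mathcal{A}_*$-comodule algebra structure on $H_*(-;\bbF_2)$, may or may not detect the perturbation: since $d$ is built from a $2$-torsion operation, it is not automatic that the induced change of the homology product yields a comodule algebra that is non-isomorphic (abstractly, after arbitrary regrading and change of generators) to the split form attached to every $\rmH B_*$; it is equally possible that the difference is only visible in a finer (e.g.\ integral or secondary) invariant, which is where the real content of Boardman's Theorem 1.3 lies. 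Asserting that ``no system of primitive free generators can satisfy'' the perturbed relation, without computing the perturbed comodule algebra and ruling out all isomorphisms, leaves the central claim unproved. So the proposal is a plausible plan in the spirit of Boardman's construction, but the existence of a suitable cocycle $d$ and the non-realizability argument both remain open in your write-up.
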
 

Boardman's result for the prime 2 is complemented by the following
result of Astey's, which characterizes the 2-local ring spectra which
are graded Eilenberg-Mac Lane spectra. It involves a~$3$-cell complex which is build using the stable Hopf map~$\eta\colon\rmS^1\to\rmS^0$ as well as another map~$t$ in the homotopy~$\pi_2(\bbS_{(2)}\cup_\eta\rme^2)\cong\bbZ_{(2)}$ of its cone: The collapse map induces an injection~$\pi_2(\bbS_{(2)}\cup_\eta\rme^2)\to\pi_2(\rmS^2)=\bbZ$ with image consisting of the subgroup of even numbers, and~$t$ is defined as the (unique) pre-image of~$2$. The cone~$I=(\bbS_{(2)}\cup_\eta\rme^2)\cup_t\rme^3$ is often called an inverted question mark complex---from the point of view of the action of the Steenrod algebra on its mod~$2$ cohomology.

\begin{theorem}{\upshape(\cite[Theorem 1.2]{astey})}
  A~$2$-local ring spectrum up to homotopy~$E$ is additively a
  graded Eilenberg-Mac Lane spectrum if and only if the unit map
  extends over the inverted question mark complex~$I$.
\end{theorem}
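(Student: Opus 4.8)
\emph{Proof strategy.} The plan is to reinterpret ``additively a graded Eilenberg-Mac Lane spectrum'' as ``module over~$\rmH\bbZ_{(2)}$'' and to trace the condition on the unit through this identification; everything takes place $2$-locally. Since $\bbZ_{(2)}$ is a discrete valuation ring, its derived category has global dimension one, so the $k$-invariants of any complex of $\bbZ_{(2)}$-modules lie in vanishing higher $\mathrm{Ext}$-groups; hence every $\rmH\bbZ_{(2)}$-module spectrum splits as a wedge $\bigvee_n\Sigma^n\rmH A_n$. The converse being obvious, a $2$-local spectrum is additively a graded Eilenberg-Mac Lane spectrum precisely when it admits an $\rmH\bbZ_{(2)}$-module structure. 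This gives one direction at once: if~$E$ is such a module, then the unit $\bbS_{(2)}\to E$ factors through the truncation $q\colon\bbS_{(2)}\to\rmH\bbZ_{(2)}$---for any $\rmH\bbZ_{(2)}$-module~$N$ the map $[\rmH\bbZ_{(2)},N]\to[\bbS_{(2)},N]$ induced by~$q$ is onto, as $\rmH\bbZ_{(2)}$-linear maps $\rmH\bbZ_{(2)}\to N$ realise all of $\pi_0 N$---and since $\pi_1\rmH\bbZ_{(2)}=\pi_2\rmH\bbZ_{(2)}=0$ there is no obstruction to extending~$q$ over the cell~$\rme^2$ attached along~$\eta$ and then over the cell~$\rme^3$ attached along~$t$, so the composite $I\to\rmH\bbZ_{(2)}\to E$ extends the unit over~$I$.

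\emph{The converse.} Now suppose an extension $\bar u\colon I\to E$ of the unit~$u$ is given. Extendability over the two cells of~$I$ says exactly that $\eta\cdot 1_E=0$ in $\pi_1(E)$ (so that, multiplying by arbitrary homotopy classes, $\eta$ acts as zero on all of $\pi_*(E)$) and that a secondary operation on~$1_E$---a Toda bracket $\langle 2,\eta,1_E\rangle$---vanishes in $\pi_2(E)/2\pi_2(E)$. I claim these two statements force the composite $j\colon F\hookrightarrow\bbS_{(2)}\xrightarrow{u}E$, where $F=\mathrm{fib}(q)=\tau_{\ge 1}\bbS_{(2)}$ is the $0$-connected cover, to be nullhomotopic, i.e.\ that~$u$ factors through~$q$. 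Granting this, $E$ acquires an $\rmH\bbZ_{(2)}$-module structure, hence is additively a graded Eilenberg-Mac Lane spectrum by the first paragraph: in the cofibre sequence $F\wedge E\to E\xrightarrow{\lambda}\rmH\bbZ_{(2)}\wedge E$ the first map is the smash of $F\hookrightarrow\bbS_{(2)}$ with~$E$, which via the unit of~$E$ is multiplication by $j$, so $j=0$ splits~$\lambda$ and presents~$E$ as a retract, hence a wedge summand, of the $\rmH\bbZ_{(2)}$-module $\rmH\bbZ_{(2)}\wedge E$.

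\emph{The main obstacle.} What is left---and this is the technical core of Astey's theorem---is the nullhomotopy of~$j$. One would run obstruction theory along a cell filtration of $F=\tau_{\ge 1}\bbS_{(2)}$, whose bottom two cells (in dimensions~$1$ and~$2$) produce precisely the obstructions $\eta\cdot 1_E$ and $\langle 2,\eta,1_E\rangle$; the point is that all subsequent obstructions, in $\pi_{\ge 3}(E)$, must then vanish. This fails for a general target spectrum, and it is the ring structure of~$E$ that makes it work, forcing each higher obstruction to be decomposable into products and Toda brackets of the data already known to vanish. Concretely one can argue on mod-$2$ cohomology: the hypotheses pin down the action of $\mathrm{Sq}^1$, $\mathrm{Sq}^2$ and the Bockstein on $\rmH^*(E;\bbF_2)$, and through the $2$-primary Adams spectral sequence for~$\bbS$ one deduces that $\rmH^*(E;\bbF_2)$ is induced from the subalgebra of the Steenrod algebra generated by~$\mathrm{Sq}^1$, which is the classical characterisation of additive Eilenberg-Mac Lane type. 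Apart from this step the argument is formal, so I expect the delicate part to be exactly this propagation of the two low-dimensional vanishing conditions through all higher obstructions.
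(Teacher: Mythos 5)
There is no proof to compare against in the paper itself: the statement is quoted as background from Astey's article, so your argument has to stand on its own, and at present it does not. The formal parts are fine: identifying $2$-local additively graded Eilenberg-Mac Lane spectra with homotopy $\rmH\bbZ_{(2)}$-modules, the easy direction (factor the unit through $\bbS_{(2)}\to\rmH\bbZ_{(2)}$ and extend over the two cells of $I$ using $\pi_1\rmH\bbZ_{(2)}=\pi_2\rmH\bbZ_{(2)}=0$), and the observation that a nullhomotopy of $j\colon\tau_{\ge1}\bbS_{(2)}\to\bbS_{(2)}\xrightarrow{u}E$ splits $E$ off the $\rmH\bbZ_{(2)}$-module $\rmH\bbZ_{(2)}\wedge E$. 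But the converse, which you yourself call the technical core, is precisely the content of Astey's theorem, and your proposal does not prove it. The claim that the ring structure forces every higher obstruction to be ``decomposable into products and Toda brackets of the data already known to vanish'' is not a valid general principle: the obstructions to compressing $j$ lie in $\pi_n(E)$ for all $n\ge3$ and are governed by the cells of the connective cover of $\bbS_{(2)}$, and already the first one, controlled by $\nu$, is neither an $\eta$-multiple nor forced to vanish by the two hypotheses --- for instance $\langle\eta,2,\eta\rangle\ni 2\nu$ only constrains $2\nu\cdot 1_E$, not $\nu\cdot 1_E$. Moreover a nullhomotopy of the map $j$ is strictly stronger than the elementwise statement that $u_*\colon\pi_{>0}\bbS_{(2)}\to\pi_*E$ vanishes, so even killing all these classes would not finish the argument without an additional convergence step.

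The alternative sketch through mod-$2$ cohomology is likewise only a pointer: that the hypotheses ``pin down'' $\mathrm{Sq}^1$, $\mathrm{Sq}^2$ and the Bockstein on $\rmH^*(E;\bbF_2)$, and that one can then ``deduce'' via the Adams spectral sequence that the cohomology is induced from the subalgebra generated by $\mathrm{Sq}^1$, is asserted rather than argued; and the final criterion is not the classical characterisation without further hypotheses --- it needs bounded-below and finite-type assumptions plus a realization result of Baas--Madsen type, mod-$2$ cohomology alone does not see higher Bockstein phenomena such as $\rmH\bbZ/4$ summands, and Astey's statement imposes no finite-type condition. So the key implication (extension of the unit over $I$ implies additively graded Eilenberg-Mac Lane) remains unproved. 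If you want to push your reduction further, note that an extension $\bar u\colon I\to E$ does give, formally, that $E$ is a homotopy retract of $I\wedge E$ via $\mu\circ(\bar u\wedge\id_E)$; the real work is then to exploit the specific Steenrod-module structure of $I$ to split off $\rmH\bbZ_{(2)}\wedge E$, and some genuine input of this kind is exactly what is missing from the proposal.
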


Again, there are also results for~commutative~$\bbS$-algebras~$E$ that satisfy~\hbox{$\pi_0(E)=\bbZ_{(p)}$}, see~\cite[III.4.2 and III.4.3]{steinberger}.


\subsection{Morava K-theory spectra}

Recall that the Morava K-theory spectra~$\K(n)$ have
$\pi_*\K(n)\cong\bbF_p[v_n^{\pm1}]$ with an element~$v_n$ of degree
$2(p^n-1)$. For~$p$ odd, these are homotopy commutative ring spectra of characteristic~$p$. But, it is known that neither the spectra~$\K(n)$ nor their connective covers~$\moravak(n)$ admit an~$E_\infty$ structure. See Lemma 5.6.4~in \cite{rognes:galois}, for example, where this is proven for the~$\moravak(n)$ with the help of Dyer-Lashof operations. And, by~\cite{Baker+Richter:connective}, it suffices to prove the result for these. Of course, the non-existence of~$E_\infty$ multiplications also follows from Steinberger's Theorem~\ref{thm:steinberger:splitting}.


\subsection{Moore spectra}\label{sec:Moore}

To conclude this section, let me comment upon multiplications on the
Moore spectra~$\rmS^0/p$. These are the cofibers of the~$p$-multiplication on~$\rmS^0$. Additional information is contained in~\cite{araki+toda:1} and~\cite{araki+toda:2}.

At the prime~$2$, the Moore spectrum~$\rmS^0/2$ is not a commutative~$\bbS$-algebra. In fact, it is
not even a ring spectrum up to homotopy. One reason is
that~$\pi_2(\rmS^0/2)\cong\bbZ/4$ is not a module
over~$\pi_0(\rmS^0/2)\cong\bbZ/2$. Another reason is that there is
clearly no ring map from~$\pi_0(\rmS^0/2)\cong\bbZ/2$
to~\hbox{$\pi_0\mathrm{Map}(\rmS^0/2,\rmS^0/2)\cong\bbZ/4$}. And of course, it also follows from Theorem~\ref{thm:mod2isEM}, because~$\rmS^0/2$ is not an Eilenberg-Mac Lane spectrum, since multiplication by the Hopf map~$\eta$ is non-trivial on the homotopy groups. In fact, the element~$\eta$ itself would have to be zero, because it is the power operation on~$2=0$ in~$\pi_0(\rmS^0/2)$.
 
For other primes, one may use the power operation~$\beta
P^1\colon\pi_0(E)\rightarrow\pi_{2p-3}(E)$ for~commutative~$\bbS$-algebras~$E$ to
show that there is no~$E_\infty$ structure on~$\rmS^0/p$: One the one
hand~\hbox{$\beta P^1(0)=0$}, but on the other hand~$\beta P^1(p)$ is
a non-trivial multiple of~$\alpha_1$, see~\cite[V.1.13]{steinberger}, for example. This implies that the unit of a commutative~$\bbS$-algebra~$E$ of characteristic~$p$ must map~$\alpha_1$ to zero. But this is not the case for~$\rmS^0/p$.

The Moore spectrum~$\rmS^0/p$ is known to have an~$A_{p-1}$-multiplication which is not~$A_p$, see~\cite[Example 3.3]{Angeltveit}.


\section{Versal examples}\label{sec:SSp}

If~$A$ is an ordinary ring of characteristic~$p$, then every~$A$-algebra is also of characteristic~$p$, and~$\bbF_p$ is the initial object. However, as it turns out, there is no commutative~$\bbS$-algebra of characteristic~$p$ which is initial in the homotopical sense, see~Proposition~\ref{prop:not_initial}. The reason is that the existence of a homotopy~$p\simeq0$ does not imply the uniqueness of such a homotopy. Therefore, we will instead turn our attention to commutative~$\bbS$-algebras which come with a chosen homotopy~$p\simeq0$.

\subsection{The commutative~\texorpdfstring{$\bbS$}{S}-algebras~\texorpdfstring{$\bbSp$}{S//p}}

Let~$p$ be a prime number. As a motivation for the following definition, consider the description
of the corresponding prime field~$\bbF_p$ as a pushout
\begin{center}\mbox{\xymatrix{ 
\bbZ[T]\ar[r]^-{T\mapsto p}\ar[d]_-{T\mapsto 0} & \bbZ\ar[d]\\
\bbZ\ar[r] & \bbF_p
}}
\end{center}
in the category of rings. This can be built as a tensor product
\[
\bbF_p=\bbZ\otimes_{\bbZ[T]}\bbZ
\] 
using the structure maps indicated in the diagram. The tensor product is already the
derived tensor product: the higher~$\Tor$-terms vanish, since~$p$ is not a zero-divisor on~$\bbZ$. 

We will imitate this now in the category of~commutative~$\bbS$-algebras. Let us agree to write~$\calE_\infty(E,F)$ for the {\em derived} mapping space of~$E_\infty$ maps~$E\to F$, i.e.~we tacitly assume that~$E$ has been replaced by an equivalent commutative~$\bbS$-algebra that is cofibrant before computing the actual space of maps. (Similarly for~$\calS_\infty$ and spectra.) Let~$\bbP$ be the left adjoint to the forgetful functor. In other words,~$\bbP X$ is the free~commutative~$\bbS$-algebra on~$X$. If~$X$ is  cofibrant as a spectrum, then~$\bbP X$ is cofibrant as an~$E_\infty$ ring spectrum. There is an adjunction
\[
\calE_\infty(\bbP X,E)\cong\calS_\infty(X,E) 
\]
for commutative~$\bbS$-algebras~$E$, induced by the unit~$X\to\bbP X$ of the adjunction. The~$E_\infty$ map corresponding to~$x\colon X\to E$ will be denoted by~$\ev(x)\colon\bbP X\to E$; it is the~$E_\infty$ map which evaluates to~$x$ on the generator~$X$.

\begin{definition}\label{def:SSp}
The~commutative~$\bbS$-algebra~$\bbSp$ is the homotopy pushout
\begin{center}
  \mbox{
    \xymatrix{ 
      \bbP\rmS^0\ar[r]^{\ev(p)}\ar[d]_{\ev(0)} & \bbS\ar[d]\\
      \bbS\ar[r]& \bbSp
    }
  }
\end{center}
in~commutative~$\bbS$-algebras. 
\end{definition}

For the honest construction of the homotopy pushout we will have to replace the map~\hbox{$\ev(0)\colon\bbP\rmS^0\to\bbS=\bbP*$} by an equivalent cofibration, for example by~$\bbP(\rmS^0\to\rmC\rmS^0)$. In other words, the commutative~$\bbS$-algebra~$\bbSp$ is obtained from the sphere spectrum~$\bbS$ by attaching an~$E_\infty$-cell so as to ensure~$p\simeq0$. In particular, the commutative~$\bbS$-algebra~$\bbSp$ is cofibrant. It has the property, in analogy with the above description of~$\bbF_p$ as~\hbox{$\bbZ[T]/(0=T=p)$}, that an~$E_\infty$ map~$\bbSp\rightarrow E$ is the same as a null-homotopy from~$p$ to~$0$ in~$E$. In particular, there might be more than one such map, in contrast to the discrete case, where a ring map~$\bbF_p\rightarrow A$ is unique if it exists. We will discuss spaces of~$E_\infty$ maps out of~$\bbSp$ in more detail later, see Section~\ref{sec:Eoo_details}.

A~$p$-local commutative~$\bbS$-algebra~$Q$ is called {\it nuclear} if it is a colimit of commutative~$\bbS$-algebras~$Q_n$, where~$Q_0=\bbS$ and, inductively, the commutative~$\bbS$-algebra~$Q_{n+1}$ is obtained from the commutative~$\bbS$-algebra~$Q_n$ by coning off (in the~$E_\infty$ sense) finitely many elements in~$\pi_n(Q_n)$ in such a way that the kernel of the corresponding map~$\pi_n(\vee\rmS^n)\to\pi_n(Q_n)$ consists of multiples of~$p$. See~\cite[Definition~2.7]{Hu+Kriz+May}.

\begin{proposition}
For all primes~$p$, the commutative~$\bbS$-algebra~$\bbSp$ is nuclear.
\end{proposition}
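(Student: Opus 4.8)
The plan is to read off the required presentation directly from Definition~\ref{def:SSp}. By construction, $\bbSp$ is obtained from $\bbS$ by coning off, in the $E_\infty$ sense, the single element $p\in\pi_0(\bbS)=\bbZ$: writing the homotopy pushout with $\ev(0)$ replaced by the cofibration $\bbP(\rmS^0\to\rmC\rmS^0)$, as in the paragraph after the definition, exhibits $\bbSp$ as $\bbS$ with exactly one $E_\infty$-cell attached along $\ev(p)$. This is precisely one instance of the cell attachments used to define a nuclear commutative $\bbS$-algebra.

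First I would check that $\bbSp$ is $p$-local, so that the notion of nuclearity applies to it. As recorded after Definition~\ref{def:SSp}, the identity map of $\bbSp$ corresponds to a null-homotopy from $p$ to $0$, so $p=0$ in $\pi_0(\bbSp)$ and $\bbSp$ has characteristic $p$. Since $\pi_*(\bbSp)$ is a module over $\pi_0(\bbSp)$, it is annihilated by $p$, hence an $\bbF_p$-vector space and in particular a $\bbZ_{(p)}$-module; therefore $\bbSp$ is $p$-local.

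Next I would exhibit the nuclear filtration explicitly. Set $Q_0=\bbS$ and let $Q_1=\bbSp$, obtained from $Q_0$ by coning off the single element $p\in\pi_0(Q_0)=\bbZ$. The corresponding map $\pi_0(\rmS^0)=\bbZ\to\pi_0(Q_0)=\bbZ$ sends the generator to $p$, so it is multiplication by $p$; its kernel is $0$, which consists of multiples of $p$, so the minimality condition in the definition is satisfied at this stage. For every $n\ge 1$ put $Q_{n+1}=Q_n$, coning off no elements of $\pi_n(Q_n)$ at all; the relevant kernel is then $0$ as well. The sequence $(Q_n)_{n\ge 0}$ stabilizes at $\bbSp$, so its colimit is $\bbSp$, and this displays $\bbSp$ as nuclear in the sense of~\cite[Definition~2.7]{Hu+Kriz+May}.

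I do not expect a genuine obstacle here; the work is in matching conventions with that reference. One should note that attaching a single $E_\infty$-cell is a legitimate case of ``coning off finitely many elements'', that a filtration which eventually stabilizes is allowed (so that a finite minimal cell algebra counts as nuclear), and that the ``relation module'' of the single generator $p$, namely the kernel of multiplication by $p$ on $\bbZ$, vanishes and hence trivially consists of multiples of $p$. The only place calling for a small argument rather than a bookkeeping remark is the verification of $p$-locality, and that, as above, reduces to the observation that $\bbSp$ has characteristic $p$.
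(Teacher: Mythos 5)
Your proof is correct and takes essentially the same route as the paper's: exhibit $\bbSp$ as obtained from $Q_0=\bbS$ by coning off the single element $p\in\pi_0(\bbS)=\bbZ$, and observe that the kernel condition holds trivially because the map $\pi_0(\rmS^0)\to\pi_0(\bbS)$ sending the generator to $p$ is injective. The additional checks you include ($p$-locality via $\ch(\bbSp)=p$, and the stabilizing filtration $Q_{n+1}=Q_n$) are harmless bookkeeping that the paper leaves implicit.
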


\begin{proof}
Set~$\bbSp=Q=Q_1$. By Definition~\ref{def:SSp}, this is obtained from~$Q_0=\bbS$
by coning off (in the~$E_\infty$ sense) the element~$p\in\pi_0(Q_0)=\pi_0(\bbS)=\bbZ$. The assumption on the kernel is satisfied, because the corresponding map~\hbox{$\pi_0(\rmS^0)\to\pi_0(\bbS)$} that sends the identity to~$p$ is even injective.
\end{proof}

The following observation will be useful later, in Section~\ref{sec:Thom}, when we relate the spectra~$\bbSp$ to Thom spectra.

\begin{proposition}\label{prop:iterated}
The~commutative~$\bbS$-algebra~$\bbSp$
may be described as the following iterated homotopy pushout in~commutative~$\bbS$-algebras.
\begin{center}
\mbox{
\xymatrix{ 
\bbP\rmS^0\ar[r]^{\bbP(p)}\ar[d]_{\ev(0)} & \bbP\rmS^0\ar[d]\ar[r]^{\ev(1)} & \bbS\ar[d]\\
\bbS\ar[r] & \bbP(\rmS^0/p)\ar[r] & \bbSp
}
}
\end{center}
\end{proposition}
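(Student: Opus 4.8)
The plan is to recognize that the big rectangle in the statement is the pasting of the two small squares along the common middle vertical edge, and that each small square is (up to equivalence) a homotopy pushout of commutative $\bbS$-algebras; the pasting lemma for homotopy pushouts then identifies the outer rectangle with the homotopy pushout defining $\bbSp$ in Definition~\ref{def:SSp}. So the real work is in two steps: first, identify the left-hand square as a presentation of $\bbP(\rmS^0/p)$; second, identify the right-hand square and check that the composite around the top, $\bbP\rmS^0\xrightarrow{\bbP(p)}\bbP\rmS^0\xrightarrow{\ev(1)}\bbS$, agrees up to homotopy with $\ev(p)\colon\bbP\rmS^0\to\bbS$, and similarly that the composite down the left, $\ev(0)$, is the $\ev(0)$ of Definition~\ref{def:SSp}.

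First I would treat the left square. The functor $\bbP$ is a left adjoint, hence preserves homotopy pushouts (between cofibrant objects, after cofibrant replacement of the maps); applying $\bbP$ to the homotopy pushout square of spectra
\begin{center}
\mbox{\xymatrix{
\rmS^0\ar[r]^{p}\ar[d]_{0} & \rmS^0\ar[d]\\
\ast\ar[r] & \rmS^0/p
}}
\end{center}
(which is a homotopy pushout precisely because $\rmS^0/p$ is by definition the cofiber of $p\colon\rmS^0\to\rmS^0$, and the cofiber sequence realizes this pushout) yields the left square, with lower-left corner $\bbP\ast=\bbS$ and lower-right corner $\bbP(\rmS^0/p)$. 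Here I use that $\bbP(p)\colon\bbP\rmS^0\to\bbP\rmS^0$ is by definition $\bbP$ applied to $p\colon\rmS^0\to\rmS^0$, and $\ev(0)=\bbP(0)$ composed with $\bbP\ast=\bbS$; one must replace $0\colon\rmS^0\to\ast$ (equivalently $\rmS^0\to\rmC\rmS^0$) by a cofibration before taking the pushout, exactly as in the discussion following Definition~\ref{def:SSp}.

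Second, for the right square, I would observe that $\ev(1)\colon\bbP\rmS^0\to\bbS$ is the augmentation, i.e.\ the $E_\infty$ map evaluating the generator to $1=\ev(1)$; since the generator $\rmS^0\to\bbP\rmS^0$ composed with $\bbP(p)$ is $p$ times the generator, and $\ev(1)$ sends the generator to $1$, the composite $\ev(1)\circ\bbP(p)$ sends the generator to $p\cdot 1=p$, hence equals $\ev(p)$ by the adjunction $\calE_\infty(\bbP\rmS^0,\bbS)\cong\calS_\infty(\rmS^0,\bbS)$. Then the right square is a homotopy pushout because it is obtained by pushing out the middle column along $\ev(1)$; concretely, the composite rectangle is the homotopy pushout of $\ev(0)$ and $\ev(1)\circ\bbP(p)=\ev(p)$, which is exactly the square of Definition~\ref{def:SSp}. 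Invoking the pasting lemma, the right square is then forced to be a homotopy pushout as well, and its lower-right corner is $\bbSp$.

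I expect the main obstacle to be a bookkeeping one rather than a conceptual one: making sure the identification of the top composite $\ev(1)\circ\bbP(p)$ with $\ev(p)$ is carried out at the point-set level with the correct cofibrant replacements, so that the pasting lemma for homotopy pushouts genuinely applies (one cannot simply paste weak pushouts without some cofibrancy), and so that the ``$1$'' in $\ev(1)$ is interpreted as the unit of $\bbS$ in a way compatible with the additive structure that makes $p\cdot 1=p$ meaningful on $\pi_0\bbS=\bbZ$. Once the squares are set up with cofibrant maps, the gluing is formal.
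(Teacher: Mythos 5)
Your proposal is correct and follows essentially the same route as the paper: the key point in both is that $\ev(p)\simeq\ev(1)\circ\bbP(p)$, checked by restricting along the generator $\rmS^0\to\bbP\rmS^0$ via the adjunction, combined with the fact that the left square is a pushout because the left adjoint $\bbP$ preserves colimits, and then pasting of (homotopy) pushouts. Your write-up merely spells out the cofibrancy bookkeeping and the pasting lemma that the paper leaves implicit.
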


This statement is analogous to the description of~$\bbF_p$
as~$\bbZ[T]/(p=0,T=1)$. Note that the left hand square is a pushout because
the left adjoint~$\bbP$ commutes with colimits.

\begin{proof}
  This follows immediately from the fact that the 
  map~$\ev(p)\colon\bbP\rmS^0\rightarrow \bbS$ factors as the composition of~$\bbP(p)$
  and~\hbox{$1\colon\bbP\rmS^0\rightarrow\bbS$}, as can be seen by
  composition with the unit~\hbox{$\bbS\rightarrow\bbP\rmS^0$}.
\end{proof}


\subsection{The underlying ring of components}

At this point, one might wish to compute the homology and homotopy
of~$\bbSp$ directly or with the help of the Hurewicz homomorphism. This will be
done later, in Section~\ref{sec:homology_and_homotopy}, see in particular Proposition~\ref{prop:homotopy}. For a start, only the following result will be needed.

\begin{proposition}\label{prop:pi0}
We have~$\pi_0(\bbSp)=\bbF_p$.
\end{proposition}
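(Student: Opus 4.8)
The plan is to compute $\pi_0(\bbSp)$ directly from the homotopy pushout square in Definition~\ref{def:SSp}. Since $\pi_0$ of a commutative $\bbS$-algebra is its underlying ordinary commutative ring of components, and since $\pi_0$ sends the homotopy pushout of commutative $\bbS$-algebras to a pushout of commutative rings whenever there is no $\pi_{-1}$-obstruction (which holds here, as all spectra involved are connective), it suffices to identify the corresponding pushout of rings. The spectrum $\bbP\rmS^0$ is the free $E_\infty$-ring on one $0$-cell, so its ring of components is $\pi_0(\bbP\rmS^0) = \bbZ[T]$, the free commutative ring on one generator $T$ (the generator being the image of the fundamental class of $\rmS^0$; this uses that $\pi_0$ of the free commutative $\bbS$-algebra is the free commutative ring, which follows from the splitting $\bbP X \simeq \bigvee_n (X^{\wedge n})_{h\Sigma_n}$ and the fact that in degree $0$ the extended powers contribute the symmetric powers of $\pi_0 X$). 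Under this identification the two maps $\ev(p), \ev(0)\colon \bbP\rmS^0 \to \bbS$ induce on $\pi_0$ the ring maps $\bbZ[T]\to\bbZ$ sending $T\mapsto p$ and $T\mapsto 0$ respectively.

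First I would therefore record that applying $\pi_0$ to the defining square yields the pushout of commutative rings
\[
\xymatrix{
\bbZ[T] \ar[r]^-{T\mapsto p} \ar[d]_-{T\mapsto 0} & \bbZ \ar[d] \\
\bbZ \ar[r] & \pi_0(\bbSp)
}
\]
which is exactly the square displayed in Section~\ref{sec:SSp} exhibiting $\bbF_p$. Hence $\pi_0(\bbSp) \cong \bbZ \otimes_{\bbZ[T]} \bbZ = \bbZ[T]/(T, T-p) = \bbZ/p = \bbF_p$. Alternatively, and perhaps more cleanly, I would invoke Proposition~\ref{prop:iterated}: $\bbSp$ is the iterated pushout, with $\bbP(\rmS^0/p)$ as the intermediate term; since $\pi_0(\bbP(\rmS^0/p)) = \bbZ/p = \bbF_p$ and the second pushout cones off the unit class in $\pi_0(\bbP\rmS^0)$ to obtain $\bbSp$ from $\bbP(\rmS^0/p)$ over $\bbS$, one reads off $\pi_0(\bbSp) = \bbF_p$ directly.

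The only genuine point requiring care is the claim that $\pi_0$ takes the homotopy pushout of connective commutative $\bbS$-algebras to the pushout in commutative rings. This is where the argument could go wrong if one is careless: pushouts of $E_\infty$-rings are relative smash products, and $\pi_0$ of a smash product is governed by a Künneth/Tor spectral sequence, so in general $\pi_0(E \wedge_R F)$ need not equal $\pi_0 E \otimes_{\pi_0 R} \pi_0 F$. Here, however, everything is connective and the relevant $\Tor$-terms that could contribute to $\pi_0$ are $\Tor_0$ only (positive $\Tor$-degrees land in positive total degree for connective modules), so the edge map $\pi_0 E \otimes_{\pi_0 R}\pi_0 F \to \pi_0(E\wedge_R F)$ is an isomorphism; combined with right-exactness this gives that $\pi_0$ preserves the pushout. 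I expect this connectivity bookkeeping — rather than any hard computation — to be the main obstacle, and it is entirely routine.
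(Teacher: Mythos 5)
Your main argument is correct and is essentially the paper's argument in a slightly different guise: the paper also reduces the claim to the degree-zero corner of a K\"unneth-type spectral sequence for the defining pushout, except that it first smashes the square with~$\rmH\bbZ$, runs the integral Eilenberg--Moore spectral sequence to get~$\rmH\bbZ_0(\bbSp)=\bbZ[T]/(0=T=p)$, and then uses connectivity (Hurewicz) to pass back to~$\pi_0$, whereas you apply the~$\Tor$ spectral sequence over~$\pi_*(\bbP\rmS^0)$ directly to homotopy. The connectivity bookkeeping you flag is indeed the whole point (higher~$\Tor$ lands in positive total degree, and the degree-zero part of the graded tensor product is~$\pi_0\otimes_{\pi_0}\pi_0$), and your identifications~$\pi_0(\bbP\rmS^0)\cong\bbZ[T]$ with~$\ev(p),\ev(0)$ inducing~$T\mapsto p$ and~$T\mapsto 0$ are right. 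One correction to your ``alternative, cleaner'' route via Proposition~\ref{prop:iterated}: it is not true that~$\pi_0(\bbP(\rmS^0/p))=\bbF_p$; for connective~$X$ one has~$\pi_0(\bbP X)\cong\bigoplus_n\mathrm{Sym}^n_{\bbZ}(\pi_0 X)$, so~$\pi_0(\bbP(\rmS^0/p))\cong\bbZ[T]/(pT)$, with a~$\bbZ$ in the weight-zero summand and a~$\bbZ/p$ in each positive weight. The conclusion still comes out, since the second pushout imposes~$T=1$ and~$\bbZ[T]/(pT,\,T-1)=\bbF_p$, but the intermediate claim as stated is false; since this was only an aside, your proof stands on the main argument.
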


\begin{proof}
  Because the spectrum~$\bbSp$ is connective, it suffices to compute the integral
  homology of it. Smashing the defining homotopy pushout diagram for~$\bbSp$ with
  the integral Eilenberg-Mac Lane spectrum yields another homotopy pushout diagram, and
  the integral Eilenberg-Moore spectral sequence then implies
  that~$\pi_0$ must be~\hbox{$\bbZ[T]/(0=T=p)=\bbF_p$}, as in the
  motivation given for the definition of~$\bbSp$ at the beginning.
\end{proof}

This result implies that we have
\[
\ch(\bbSp)=p,
\]
as it should be.


\subsection{Spaces of~\texorpdfstring{$E_\infty$}{Einfty} maps out of~\texorpdfstring{$\bbSp$}{S//p}}\label{sec:Eoo_details}

The following result identifies the space of~$E_\infty$ maps~$\bbSp\rightarrow E$ into any commutative~$\bbS$-algebra~$E$.

\begin{proposition}\label{prop:maps_to_E}
  Let~$E$ be a~commutative~$\bbS$-algebra. If~$\ch(E)=p$, then there is an equivalence
\[
\calE_\infty(\bbSp,E)\simeq\Omega^{\infty+1}(E)
\]
of spaces. Otherwise, the left hand side is empty.
\end{proposition}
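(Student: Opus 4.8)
The plan is to compute the mapping space directly from the defining homotopy pushout of $\bbSp$ in Definition~\ref{def:SSp}. Applying the contravariant functor $\calE_\infty(-,E)$ to a homotopy pushout square of commutative $\bbS$-algebras turns it into a homotopy pullback square of spaces, so
\[
\calE_\infty(\bbSp,E)\simeq \calE_\infty(\bbS,E)\times^h_{\calE_\infty(\bbP\rmS^0,E)}\calE_\infty(\bbS,E).
\]
Now $\calE_\infty(\bbS,E)$ is contractible (the unit $\bbS\to E$ is the terminal object of the relevant space since $\bbS$ is initial among commutative $\bbS$-algebras), and by the free–forgetful adjunction $\calE_\infty(\bbP\rmS^0,E)\simeq\calS_\infty(\rmS^0,E)\simeq\Omega^\infty E$. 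Under these identifications the two maps in the cospan are the points $\ev(p)$ and $\ev(0)$ of $\Omega^\infty E$, i.e.\ the images of $p$ and $0$ in $\pi_0 E$ under the unit. Hence $\calE_\infty(\bbSp,E)$ is the homotopy pullback of two points $p,0\colon *\to\Omega^\infty E$, which is the space of paths in $\Omega^\infty E$ from $p$ to $0$.

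First I would make the functoriality step precise: $\calE_\infty(-,E)$ sends homotopy colimits to homotopy limits, and since $\bbSp$ was built as a genuine (cofibrant) homotopy pushout with $\ev(0)$ replaced by a cofibration, no further care is needed. Next I would record the two input computations above. Then the whole proposition reduces to analyzing the path space $P_{p,0}(\Omega^\infty E)$. If $p$ and $0$ lie in different path components of $\Omega^\infty E$ — equivalently, $p\ne 0$ in $\pi_0 E$, i.e.\ $\ch(E)\ne p$ — this path space is empty, giving the last sentence of the statement. If $p=0$ in $\pi_0 E$, i.e.\ $\ch(E)=p$, then $p$ and $0$ lie in the same component, and the path space between two points in the same component of any space is homotopy equivalent to the based loop space of that component. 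The component of $0$ in $\Omega^\infty E$ is an infinite loop space equivalent to $\Omega^\infty E$ itself after translation, so $\Omega(\Omega^\infty E)\simeq\Omega^{\infty+1}E$, yielding the claimed equivalence.

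The one point that needs a little care — and is the main thing to get right rather than a genuine obstacle — is the identification of the cospan maps and the fact that translating by the chosen null-homotopy does not affect the homotopy type: the path space from $p$ to $0$ in $\Omega^\infty E$, when nonempty, is (non-canonically) equivalent to the loop space at $0$, and since $\Omega^\infty E$ is a grouplike $E_\infty$-space all its path components are equivalent infinite loop spaces, so $\Omega_0(\Omega^\infty E)\simeq\Omega^\infty\Sigma^{-1}E=\Omega^{\infty+1}E$. I would also remark that this recovers, and refines, the earlier informal statement that an $E_\infty$ map $\bbSp\to E$ is the same as a null-homotopy from $p$ to $0$ in $E$: the set of path components of $\calE_\infty(\bbSp,E)$ is a torsor under $\pi_1(\Omega^\infty E)=\pi_1 E$ when $\ch(E)=p$, and empty otherwise.
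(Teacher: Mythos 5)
Your proposal is correct and follows essentially the same route as the paper: apply $\calE_\infty(-,E)$ to the defining homotopy pushout to obtain a homotopy pullback over $\calE_\infty(\bbP\rmS^0,E)\simeq\Omega^\infty E$ with contractible corners $\calE_\infty(\bbS,E)$, and identify the result as the path space from $p$ to $0$, which is empty unless $\ch(E)=p$ and otherwise equivalent to $\Omega^{\infty+1}E$. Your added care about identifying the cospan maps and the (non-canonical) translation to the loop space at $0$ is exactly the implicit content of the paper's shorter argument.
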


\begin{proof}
  The definition of~$\bbSp$ as a homotopy pushout implies that there is a homotopy pullback
  \begin{center}
    \mbox{
      \xymatrix{ 
        \calE_\infty(\bbSp,E)\ar[d]\ar[r] &
        \calE_\infty(\bbS,E)\ar[d]\\
        \calE_\infty(\bbS,E) \ar[r]&
        \calE_\infty(\bbP\rmS^0,E).
      }
    }
  \end{center}
  Now use that the spaces~$\calE_\infty(\bbS,E)$ are contractible, and that
  their images in~$\calE_\infty(\bbP\rmS^0,E)$ can be connected by a path if and only if~$p\simeq0$. The result then follows using the adjunction~$\calE_\infty(\bbP\rmS^0,E)\cong\calS_\infty(\rmS^0,E)$
  and the definition~$\calS_\infty(\rmS^0,E)=\Omega^\infty(E)$.
\end{proof}

\begin{example}
If~$E=\rmH A$ for some discrete commutative ring~$A$ of characteristic~$p$, then the space~$\calE_\infty(\bbSp,\rmH A)\simeq\Omega^{\infty+1}\rmH A$ is contractible. The following result states that the converse also holds in the connective case.
\end{example}

\begin{proposition}
If~$E$ is a connective commutative~$\bbS$-algebra, with the property that the space~$\calE_\infty(\bbSp,E)$ is contractible, then~$E\simeq\rmH\pi_0(E)$ is discrete.
\end{proposition}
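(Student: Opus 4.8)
The plan is to feed the hypothesis through Proposition~\ref{prop:maps_to_E} and then simply read off the homotopy groups of $E$. Since a contractible space is in particular non-empty, the dichotomy in Proposition~\ref{prop:maps_to_E} rules out the possibility that $\calE_\infty(\bbSp,E)$ is empty, and hence forces $\ch(E)=p$. That same proposition then supplies an equivalence $\calE_\infty(\bbSp,E)\simeq\Omega^{\infty+1}(E)$ of spaces, and since the left hand side is contractible by assumption, so is the infinite loop space $\Omega^{\infty+1}(E)$.

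Next I would translate the contractibility of $\Omega^{\infty+1}(E)$ into a statement about $\pi_*(E)$. One has $\pi_n\bigl(\Omega^{\infty+1}(E)\bigr)\cong\pi_{n+1}(E)$ for every $n\geq0$, so contractibility yields $\pi_m(E)=0$ for all $m\geq1$. This is the only place where I actually use Proposition~\ref{prop:maps_to_E}; everything else is formal. Now the connectivity assumption on $E$ enters: it gives $\pi_m(E)=0$ for all $m<0$ as well, so that the homotopy of $E$ is concentrated in degree zero. Standard Postnikov theory then identifies such a spectrum with the Eilenberg-Mac Lane spectrum $\rmH\pi_0(E)$, and because the $E_\infty$ structure on the Eilenberg-Mac Lane spectrum of a discrete commutative ring is essentially unique (as recalled above), the equivalence $E\simeq\rmH\pi_0(E)$ may even be taken in commutative $\bbS$-algebras.

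I do not expect a genuine obstacle here: the substantive work has already been carried out in Proposition~\ref{prop:maps_to_E}, and what remains is bookkeeping. The only point worth stating carefully is that the equivalence of that proposition is used purely as an equivalence of spaces---so that the attribute ``contractible'' can be transported across it---and that connectivity is genuinely needed, since without it the conclusion would fail (one would only control $\pi_m(E)$ for $m\geq1$).
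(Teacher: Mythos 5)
Your argument is correct and follows essentially the same route as the paper's own (very brief) proof: contractibility of $\calE_\infty(\bbSp,E)\simeq\Omega^{\infty+1}(E)$ kills $\pi_m(E)$ for $m\geq1$, and connectivity does the rest, so $E$ is the Eilenberg-Mac Lane spectrum on $\pi_0(E)$. Your additional remarks---that non-emptiness forces $\ch(E)=p$ before Proposition~\ref{prop:maps_to_E} can be applied, and that the essentially unique $E_\infty$ structure on a discrete Eilenberg-Mac Lane spectrum upgrades the equivalence---are correct refinements that the paper leaves implicit.
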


\begin{proof}
If~$\Omega^{\infty+1}(E)$ is contractible, then the higher homotopy groups of~$E$ vanish, so that~$E$ is an Eilenberg-Mac Lane spectrum.
\end{proof}

\begin{example}\label{ex:Eoo_maps_SSt_MO}
Let us consider the unoriented bordism spectrum~$\MO$. Since the underlying ring is~$\pi_0(\MO)=\bbF_2$, this spectrum satisfies
\[
\ch(\MO)=2.
\]
Because~$\pi_1(\MO)$ is trivial, it admits an~$E_\infty$ map from~$\bbSt$ which is unique up to homotopy (of~$E_\infty$ maps). However, since~$\pi_2(\MO)\not=0$, the space of all such maps is not contractible.
\end{example}

\begin{corollary}\label{cor:not_contractible}
There is an equivalence~$\calE_\infty(\bbSp,\bbSp)\simeq\Omega^{\infty+1}(\bbSp)$.
\end{corollary}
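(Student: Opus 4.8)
The plan is to recognize this as an immediate specialization of Proposition~\ref{prop:maps_to_E} to the case $E=\bbSp$. That proposition gives, for any commutative~$\bbS$-algebra~$E$, an equivalence $\calE_\infty(\bbSp,E)\simeq\Omega^{\infty+1}(E)$ precisely when $\ch(E)=p$, and an empty mapping space otherwise. So the only thing to check before invoking it is that~$\bbSp$ itself has characteristic~$p$.

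First I would recall that $\pi_0(\bbSp)=\bbF_p$ by Proposition~\ref{prop:pi0}, so that the ordinary commutative ring~$\pi_0(\bbSp)$ has characteristic~$p$ in the usual sense; equivalently, $p$ maps to~$0$ under the unit $\bbZ=\pi_0(\bbS)\to\pi_0(\bbSp)$. This is exactly the statement $\ch(\bbSp)=p$ recorded after Proposition~\ref{prop:pi0}. With this in hand, applying Proposition~\ref{prop:maps_to_E} with $E=\bbSp$ lands in the first (non-empty) case and yields the asserted equivalence $\calE_\infty(\bbSp,\bbSp)\simeq\Omega^{\infty+1}(\bbSp)$ directly.

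There is essentially no obstacle here: the corollary is a one-line consequence of the two preceding results, and no further computation or model-categorical bookkeeping is needed beyond what Proposition~\ref{prop:maps_to_E} already handles (cofibrant replacement of~$\bbSp$, contractibility of $\calE_\infty(\bbS,-)$, and the free–forgetful adjunction). If one wanted, one could also note in passing that this endomorphism space is therefore \emph{not} contractible, since $\pi_*(\bbSp)$ is not concentrated in degree~$0$ — a point that will matter later — but that remark is not required for the proof of the equivalence itself.
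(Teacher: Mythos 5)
Your proposal is correct and matches the paper's own (implicit) argument: the corollary is exactly Proposition~\ref{prop:maps_to_E} applied to $E=\bbSp$, with $\ch(\bbSp)=p$ supplied by Proposition~\ref{prop:pi0}. Nothing further is needed.
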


We will see later, in Proposition~\ref{prop:homotopy}, that~$\pi_1(\bbSp)=0$, but that~$\bbSp$ has non-trivial higher homotopy groups, so that the space~$\calE_\infty(\bbSp,\bbSp)$ is connected but not contractible.


\subsection{Commutative~\texorpdfstring{$\bbSp$}{S//p}-algebras}

Let us begin by recalling the following definition.

\begin{definition}
A {\it commutative~$\bbSp$-algebra} is a commutative~$\bbS$-algebra~$E$ together with an~$E_\infty$ map~$s_E\colon\bbSp\to E$, the {\it structure map}. The space~$\calE_\infty(\bbSp,E)$ is the {\it space of~$\bbSp$-algebra structures on~$E$}.
\end{definition}

Proposition~\ref{prop:maps_to_E} and the examples which follow it show that being a commutative~$\bbSp$-algebra is not a {\it property} of~commutative~$\bbS$-algebras, but an extra {\it structure}, which
if it exists, need not be unique. And if it is unique, it need not be canonically so. In more conceptual terms, the property~$\ch(E)=p$ defines a full subcategory of the category of commutative~$\bbS$-algebras. Rather than work in this full subcategory defined by a property, it seems better to keep track of the choice of a structure map~$s_E\colon\bbSp\to E$, and work in the category~$\calE_\infty^{\bbSp}$ of commutative~$\bbSp$-algebras.

If~$E$ and~$F$ are commutative~$\bbSp$-algebras, then there is a homotopy fibration square
\begin{center}
  \mbox{ 
    \xymatrix{
\calE_\infty^{\bbSp}(E,F)\ar[r]^-\subseteq\ar[d] & \calE_\infty(E,F)\ar[d]^{s_E^*}\\
\{s_F\}\ar[r]_-\subseteq& \calE_\infty(\bbSp,F)
    } 
  }
\end{center}
for the derived mapping spaces. Therefore, Proposition~\ref{prop:maps_to_E} also describes the difference between the space of all~$E_\infty$ maps and that of~$\bbSp$-algebra maps. 

For example, in contrast to Corollary~\ref{cor:not_contractible}, it is clear that there is an essentially unique map~$\bbSp\rightarrow\bbSp$ of algebras over~$\bbSp$. Compare with Proposition~\ref{prop:not_initial}.


\section{Homology and homotopy of the versal examples}\label{sec:homology_and_homotopy}

In this section, we will give some basic information on the homotopy type of~$\bbSp$ and
the associated algebraic invariants: its homology and homotopy groups. 

\begin{proposition}
For each prime~$p$, the spectrum~$\bbSp$ is a graded Eilenberg-Mac Lane spectrum.
\end{proposition}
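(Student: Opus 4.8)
The plan is to deduce this from Steinberger's splitting result, Theorem~\ref{thm:steinberger:splitting}: a connective commutative~$\bbS$-algebra of finite type whose ring of components is~$\bbF_p$ is additively a graded Eilenberg-Mac Lane spectrum. Two of the three hypotheses are already in hand. We have~$\pi_0(\bbSp)=\bbF_p$ by Proposition~\ref{prop:pi0}, and~$\bbSp$ is connective because it is a homotopy pushout of connective commutative~$\bbS$-algebras: the sphere~$\bbS$ and the free object~$\bbP\rmS^0\simeq\bigvee_{n\ge 0}\Sigma^\infty_+ B\Sigma_n$ are connective, and the bar-construction model of the pushout is a geometric realization of connective objects, hence connective. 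So the real work is to check that~$\bbSp$ has finite type, i.e.\ that~$H_*(\bbSp;\bbF_p)$ is finite-dimensional in each degree.

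For this I would smash the defining pushout square with~$\rmH\bbF_p$. Since base change along~$\bbS\to\rmH\bbF_p$ preserves colimits of commutative~$\bbS$-algebras, this identifies~$\rmH\bbF_p\wedge\bbSp$ with~$\rmH\bbF_p\wedge_{\rmH\bbF_p\wedge\bbP\rmS^0}\rmH\bbF_p$, and the Künneth spectral sequence of~\cite[Chapter~IV]{EKMM} takes the form
\[
\Tor^{H_*(\bbP\rmS^0;\bbF_p)}_{*,*}(\bbF_p,\bbF_p)\ \Longrightarrow\ H_*(\bbSp;\bbF_p).
\]
Here the two module structures on~$\bbF_p$ are induced by~$\ev(0)$ and~$\ev(p)$; on~$H_0$ both send the polynomial generator~$x$ to~$0$---the second one because~$p$ maps to~$0$ modulo~$p$---and both kill every Dyer-Lashof operation applied to~$x$, so both are the augmentation. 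Now~$H_*(\bbP\rmS^0;\bbF_p)$ is the free allowable algebra over the Dyer-Lashof algebra on one class in degree~$0$; as a graded-commutative algebra it is free on a set of generators with finitely many in each degree and exactly one (namely~$x$) in degree~$0$. Computing~$\Tor$ of~$\bbF_p$ against itself over such an algebra gives a bigraded vector space that is finite-dimensional in each total degree, so the abutment~$H_*(\bbSp;\bbF_p)$ is of finite type, and Theorem~\ref{thm:steinberger:splitting} applies.

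The step that deserves care is precisely this last finiteness claim, since~$H_*(\bbP\rmS^0;\bbF_p)$ is \emph{not} finite-dimensional in degree~$0$: it contains a polynomial algebra~$\bbF_p[x]$ there. The point is that~$\Tor^{\bbF_p[x]}(\bbF_p,\bbF_p)$ is just an exterior algebra on a single class, so after splitting off this tensor factor the remaining Dyer-Lashof generators all lie in positive degrees and contribute a~$\Tor$-algebra that is manifestly of finite type. A self-contained alternative, bypassing Steinberger entirely, would be to run the same spectral sequence to the end, computing~$H_*(\bbSp;\bbF_p)$ together with its action of the Steenrod algebra and exhibiting it as a direct sum of shifted copies of the Steenrod algebra; but the route through Theorem~\ref{thm:steinberger:splitting} is much shorter.
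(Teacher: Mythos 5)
Your argument is correct and follows the same route as the paper: the proposition is deduced from Steinberger's Theorem~\ref{thm:steinberger:splitting} together with $\pi_0(\bbSp)=\bbF_p$ from Proposition~\ref{prop:pi0}. The only difference is that the paper treats connectivity and the finite-type hypothesis as immediate (the construction uses finitely many $E_\infty$ cells, and the homology is identified in Section~\ref{sec:homology_and_homotopy} as the free Dyer-Lashof algebra on a degree-one class, visibly of finite type), whereas you verify finite type directly via the K\"unneth spectral sequence $\Tor^{H_*(\bbP\rmS^0;\bbF_p)}(\bbF_p,\bbF_p)\Rightarrow H_*(\bbSp;\bbF_p)$ -- a sound, if more laborious, check.
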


\begin{proof}
Since~$\pi_0(\bbSp)=\bbF_p$ by Proposition~\ref{prop:pi0}, this is a consequence of Steinberger's result, see Theorem~\ref{thm:steinberger:splitting}.
\end{proof}

As a consequence, the homotopy type of~$\bbSp$ can be read off from its homotopy groups, and the homotopy groups of~$\bbSp$ will be computable once the~(mod~$p$) homology is known. 


\subsection{The homology of~\texorpdfstring{$\bbSp$}{S//p}}

The backbone of the homology computation is the following result.

\begin{proposition}\label{prop:A-algebra_equivalence}
If~$A$ is a commutative~$\bbS$-algebra of characteristic~$p$, then there exists an equivalence
\[
A\wedge\bbSp\simeq A\wedge\bbP\rmS^1
\]
of commutative~$A$-algebras.
\end{proposition}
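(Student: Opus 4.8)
The plan is to compute $A\wedge\bbSp$ from the pushout presentation of~$\bbSp$ and to exploit that $\ch(A)=p$ forces the two structure maps to agree up to homotopy. Base change along the unit $\bbS\to A$, i.e.\ the functor $E\mapsto A\wedge E$ from commutative $\bbS$-algebras to commutative $A$-algebras, is a left adjoint and hence preserves homotopy pushouts; moreover, comparing universal properties shows that it carries $\bbP X=\bbP_\bbS X$ to the free commutative $A$-algebra $\bbP_A(A\wedge X)$ on the $A$-module $A\wedge X$. Applying this to Definition~\ref{def:SSp} identifies $A\wedge\bbSp$ with the homotopy pushout, in commutative $A$-algebras, of the span
\[
A\ \xleftarrow{\ \id_A\wedge\ev(0)\ }\ \bbP_A(A)\ \xrightarrow{\ \id_A\wedge\ev(p)\ }\ A .
\]

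The next step is to collapse this span. Under the adjunction $\calE_\infty^A(\bbP_A(A),A)\cong\mathrm{Map}_{A\text{-Mod}}(A,A)\simeq\Omega^\infty A$, the map $\id_A\wedge\ev(x)$ corresponds to the image of $x\in\pi_0(\bbS)$ under the unit $\bbZ\to\pi_0(A)$. For $x=0$ this element is $0$, and for $x=p$ it is $p\cdot 1_A$, which equals $0$ precisely because $\ch(A)=p$. So the two legs lie in the same path component of the derived mapping space, i.e.\ they are homotopic as maps of commutative $A$-algebras. Since a homotopy pushout depends only on the homotopy classes of its legs, we obtain
\[
A\wedge\bbSp\ \simeq\ \mathrm{hocolim}\bigl(A\ \xleftarrow{\ \varepsilon\ }\ \bbP_A(A)\ \xrightarrow{\ \varepsilon\ }\ A\bigr),
\]
the homotopy pushout taken along the augmentation $\varepsilon=\id_A\wedge\ev(0)$ on both sides.

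It remains to recognize this bar-type pushout as a free algebra. The functor $\bbP_A$ is left adjoint to the forgetful functor from commutative $A$-algebras to $A$-modules, hence it preserves homotopy colimits, and it sends the zero map $A\to 0$ to the augmentation $\bbP_A(A)\to\bbP_A(0)=A$. Since the homotopy pushout of $0\leftarrow A\to 0$ in $A$-modules is the suspension $\Sigma A\simeq A\wedge\rmS^1$, applying $\bbP_A$ yields
\[
\mathrm{hocolim}\bigl(A\ \xleftarrow{\ \varepsilon\ }\ \bbP_A(A)\ \xrightarrow{\ \varepsilon\ }\ A\bigr)\ \simeq\ \bbP_A(A\wedge\rmS^1)\ =\ A\wedge\bbP\rmS^1
\]
of commutative $A$-algebras, and composing the two displayed equivalences completes the argument. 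The one point that needs care is homotopy-invariance: all pushouts and all applications of $\bbP_A$ should be performed on cofibrant inputs (replacing $\ev(0)$ by a cofibration as in the construction following Definition~\ref{def:SSp}, and using that $\bbP$ and $\bbP_A$ are left Quillen), so that the point-set pushouts model the homotopy pushouts and so that replacing the leg $\id_A\wedge\ev(p)$ by a homotopic map genuinely does not change the outcome. This is routine, and I do not expect a serious obstacle beyond this bookkeeping.
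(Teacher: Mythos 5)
Your argument is correct and is essentially the paper's own proof: both identify $A\wedge\bbSp$ and $A\wedge\bbP\rmS^1$ as homotopy pushouts of spans $A\leftarrow A\wedge\bbP\rmS^0\to A$, use the adjunction $\calE_\infty^{A}(A\wedge\bbP\rmS^0,A)\simeq\Omega^\infty A$ to see that $A\wedge\ev(p)$ and $A\wedge\ev(0)$ are homotopic as commutative $A$-algebra maps because $\ch(A)=p$, and conclude that the pushouts agree. The only cosmetic difference is that you spell out the identification of the second pushout as $\bbP_A$ applied to the suspension pushout $0\leftarrow A\to 0$, which the paper leaves implicit, together with the (correctly flagged) cofibrancy bookkeeping.
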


\begin{proof}
Both~$A$-algebras are homotopy pushouts of diagrams
  \begin{center}
    \mbox{ 
      \xymatrix@C=50pt{
        A\wedge\bbP\rmS^0\ar[r]^-{A\wedge\ev(0)}\ar[d]& A\wedge\bbS\\
        A\wedge\bbS,&
      } 
    }
  \end{center}
  where the left arrow is~$A\wedge\ev(p)$ and~$A\wedge\ev(0)$, respectively. The space of 
  maps of~$A$-algebras from~$A\wedge\bbP \rmS^0$ to~$A\wedge\bbS\simeq A$ is, by the adjunctions
  \[
\calE_\infty^{A}(A\wedge\bbP\rmS^0,A)\cong
\calE_\infty(\bbP \rmS^0,A)\cong
\calS_\infty(\rmS^0,A)=\Omega^\infty A,
\]
   equivalent to the underlying infinite loop spaces of~$A$. (Note that~\hbox{$A\wedge\bbP\rmS^0$} is cofibrant as a commutative~$A$-algebra.) Since~$p$ and~$0$ are in the same component by hypothesis on~$A$, these maps of commutative~$A$-algebras are homotopic. As a result, the homotopy pushouts are equivalent.
\end{proof}

\begin{corollary}\label{cor:A-algebra_equivalence}
If~$A$ is a commutative~$\bbS$-algebra with~$\ch(A)=p$, then there is an isomorphism
\[
A_*(\bbSp)\cong A_*(\bbP\rmS^1)
\]
of~$A_*$-algebras. 
\end{corollary}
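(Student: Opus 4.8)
The plan is to obtain the corollary as an immediate consequence of Proposition~\ref{prop:A-algebra_equivalence} by passing to homotopy groups. Recall that the $A$-homology of a spectrum $X$ is defined by $A_*(X)=\pi_*(A\wedge X)$, and that when $X$ is a commutative $\bbS$-algebra the smash product $A\wedge X$ is again a commutative $A$-algebra (the smash product of commutative $\bbS$-algebras is their coproduct, so $A\wedge X$ is canonically a commutative $A$-algebra). Consequently $\pi_*(A\wedge X)$ carries the structure of a graded-commutative $A_*$-algebra, the $A_*$-module structure being induced by the unit $A\to A\wedge X$. Both $\bbSp$ and $\bbP\rmS^1$ are commutative $\bbS$-algebras, and $\bbSp$ is cofibrant, so $A_*(\bbSp)$ and $A_*(\bbP\rmS^1)$ are $A_*$-algebras in this way.

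First I would invoke Proposition~\ref{prop:A-algebra_equivalence}, which supplies an equivalence $A\wedge\bbSp\simeq A\wedge\bbP\rmS^1$ of commutative $A$-algebras. Applying $\pi_*$ then yields an isomorphism of graded rings; since the equivalence is a map of commutative $A$-algebras, this isomorphism is compatible with the unit maps out of $A$, hence is an isomorphism of $A_*$-algebras. This is precisely the asserted isomorphism $A_*(\bbSp)\cong A_*(\bbP\rmS^1)$.

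There is essentially no obstacle here: the entire content lies in Proposition~\ref{prop:A-algebra_equivalence}, and the corollary is just the statement of that proposition read through the functor $\pi_*$. The only point requiring minimal care is that the equivalence produced there be genuinely one of commutative $A$-algebras rather than merely of $A$-modules or of underlying spectra---but this is guaranteed because it is constructed as an equivalence of homotopy pushouts taken in the category of commutative $A$-algebras. Note that the corollary does not require an explicit identification of $A_*(\bbP\rmS^1)$; such an identification, in terms of a free object with Dyer-Lashof operations, would be a separate homology computation carried out afterwards.
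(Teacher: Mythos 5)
Your proposal is correct and matches the paper's (implicit) argument: the corollary is stated there without separate proof precisely because it is obtained, as you do, by applying $\pi_*$ to the equivalence $A\wedge\bbSp\simeq A\wedge\bbP\rmS^1$ of commutative $A$-algebras from Proposition~\ref{prop:A-algebra_equivalence}. Your added remarks on why the resulting isomorphism is one of $A_*$-algebras are accurate and consistent with the paper.
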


We hasten to point out that neither the equivalence in Proposition~\ref{prop:A-algebra_equivalence}, nor the isomorphism in Corollary~\ref{cor:A-algebra_equivalence} can be induced by a map between~$\bbSp$ and~$\bbP\rmS^1$. If it were, then the isomorphism would be compatible with the natural~$A$-homology operations for all~$A$. Example~\ref{ex:homology_Bockstein} below shows that this need not be the case.

The preceding results can be applied in the case~$A=\bbSp$ itself, and in the case~$A=\MO$ when~$p=2$. We will mostly be interested in the case~$A=\rmH\bbF_p$, when the preceding corollary shows the existence of an isomorphism
\begin{equation}\label{eq:H_iso}
\rmH_*(\bbSp)\cong\rmH_*(\bbP\rmS^1).
\end{equation}
The right hand side is the free algebra over the Araki-Kudo-Dyer-Lashof algebra on one generator in degree~$1$, see~\cite[II.4]{cohen+lada+may} and~\cite{Baker:free}. In particular, 
\begin{equation}\label{eq:H_1}
\rmH_1(\bbSp)\cong\rmH_1(\bbP\rmS^1)\cong\bbZ/p.
\end{equation}
The consequences for the uniqueness of the isomorphism~\eqref{eq:H_iso} will be discussed below.

\begin{example}\label{ex:homology_Bockstein}
The natural~$A$-homology operations on~$A_*(\bbSp)$\ and~$A_*(\bbP\rmS^1)$
differ in the case~$A=\rmH\bbF_p$ of the Eilenberg-Mac Lane spectrum. To see this, note that on the one hand we have an isomorphism~$\rmH\bbZ_0(\bbP\rmS^1)=\bbZ$, while on the other hand we have an isomorphism~$\rmH\bbZ_0(\bbSp)=\bbF_p$. Consequently, the homology Bockstein from dimension~$1$ to~$0$ is zero on the homology of~$\bbP\rmS^1$, but an isomorphism on the homology of~$\bbSp$.
\end{example}

In the case~$p=2$, the homology of the cell commutative~$\bbS$-algebra~$\bbSt$ can also be obtained as a special case of Baker's calculations in~\cite{Baker:BP}.

Let us now address the question of the uniqueness of the equivalence in Proposition~\ref{prop:A-algebra_equivalence} in the specific case~$A=\rmH\bbF_p$.

\begin{proposition}
There exist precisely~$p-1$ homotopy classes of equivalences
\[
\rmH\bbF_p\wedge\bbP\rmS^1\longrightarrow\rmH\bbF_p\wedge\bbSp
\]
of commutative~$\rmH\bbF_p$-algebras.
\end{proposition}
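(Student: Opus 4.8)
The plan is to count homotopy classes of $\rmH\bbF_p$-algebra equivalences by identifying the relevant mapping space as the space of $E_\infty$ $\rmH\bbF_p$-algebra maps $\rmH\bbF_p\wedge\bbP\rmS^1\to\rmH\bbF_p\wedge\bbSp$ and then restricting to those maps that are equivalences. By the free–forgetful adjunction, $\calE_\infty^{\rmH\bbF_p}(\rmH\bbF_p\wedge\bbP\rmS^1,\rmH\bbF_p\wedge\bbSp)\cong\calS_\infty(\rmS^1,\rmH\bbF_p\wedge\bbSp)=\Omega^{\infty+1}(\rmH\bbF_p\wedge\bbSp)$, and $\pi_0$ of this space is $\rmH_1(\bbSp)\cong\bbZ/p$ by~\eqref{eq:H_1}. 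So there are exactly $p$ homotopy classes of $\rmH\bbF_p$-algebra maps, determined by where the degree-$1$ generator goes in $\rmH_1(\bbSp)\cong\bbZ/p$.

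Next I would determine which of these $p$ classes are equivalences. A map of commutative $\rmH\bbF_p$-algebras $\rmH\bbF_p\wedge\bbP\rmS^1\to\rmH\bbF_p\wedge\bbSp$ is an equivalence if and only if it induces an isomorphism on $\rmH\bbF_p$-homology, i.e.\ on $\rmH_*(\bbP\rmS^1)\to\rmH_*(\bbSp)$ as a map of modules over the Dyer–Lashof algebra (and over the dual Steenrod algebra). Since the source is the free Araki–Kudo–Dyer–Lashof algebra on one degree-$1$ generator $x$ and the target is abstractly isomorphic to it, the map is an equivalence precisely when the image of $x$ is again an algebra generator, equivalently a nonzero element of $\rmH_1(\bbSp)\cong\bbZ/p$: a map sending $x\mapsto a$ with $a\in\bbF_p$ is an equivalence iff $a\neq 0$, and there are $p-1$ such classes. (That a map carrying $x$ to a nonzero multiple of the generator actually is an isomorphism of $\rmH\bbF_p$-algebras follows because both sides are the free allowable $R$-algebra on a one-dimensional class, so such a map is surjective, hence an isomorphism by a dimension count in each degree, using the finite type established by Steinberger's theorem~\ref{thm:steinberger:splitting} together with Corollary~\ref{cor:A-algebra_equivalence}.)

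The main obstacle is the step asserting that $x\mapsto a\cdot(\text{generator})$ with $a\neq 0$ gives an \emph{equivalence}, not merely a map: one must check that the induced $\rmH\bbF_p$-homology map is an isomorphism. The clean way is to note that $\rmH_*(-)$ of a free commutative $\rmH\bbF_p$-algebra $\rmH\bbF_p\wedge\bbP Y$ is the free allowable $R$-algebra on $\rmH_*(Y)$; applying this to $Y=\rmS^1$ and comparing with $\rmH_*(\bbSp)$ via~\eqref{eq:H_iso}, both are free on a single generator in degree $1$, and a map between free objects on a single generator is an isomorphism as soon as it sends the generator to a generator. Over $\bbF_p$ the generators of the degree-$1$ part are exactly the $p-1$ nonzero scalars, while the zero map is visibly not an equivalence (its image lands in the sub-Hopf-algebra generated by nothing, i.e.\ $\rmH\bbF_p$ in degree $0$ only). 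Hence exactly $p-1$ of the $p$ homotopy classes are equivalences, completing the proof.
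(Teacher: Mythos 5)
Your argument is correct and follows essentially the same route as the paper: the free--forgetful adjunction identifies the derived mapping space with $\Omega^{\infty+1}(\rmH\bbF_p\wedge\bbSp)$, whose $\pi_0$ is $\rmH_1(\bbSp)\cong\bbZ/p$, and exactly the $p-1$ nonzero classes are equivalences. The only difference is that you spell out, via the free allowable algebra over the Dyer--Lashof algebra, why a nonzero scalar on the generator yields an equivalence, a point the paper leaves implicit (it relies on the equivalence already produced in Proposition~\ref{prop:A-algebra_equivalence} together with the $\bbF_p^\times$-scaling remark).
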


\begin{proof}
There are adjunctions
\[
\calE_\infty^{A}(A\wedge\bbP\rmS^1,A\wedge\bbSp)\cong
\calE_\infty(\bbP\rmS^1,A\wedge\bbSp)\cong
\calS_\infty(\rmS^1,A\wedge\bbSp),
\]
so that the homotopy classes of maps~$A\wedge\bbP\rmS^1\to A\wedge\bbSp$ of commutative~$A$-algebras are parametrized by
\[
\pi_1(A\wedge\bbSp)=A_1(\bbSp).
\]
In the present case~$A=\rmH\bbF_p$ this means that there are precisely~$p$ homotopy classes of maps 
$\rmH\bbF_p\wedge\bbP\rmS^1\to\rmH\bbF_p\wedge\bbSp$ of commutative~$\rmH\bbF_p$-algebras, and one of them (the zero) is not an equivalence.
\end{proof}

Clearly, for~$p=2$, we have~$p-1=1$, so that there is in fact a unique equivalence
\[
\rmH\bbF_2\wedge\bbP\rmS^1\simeq\rmH\bbF_2\wedge\bbSt
\]
of~$\rmH\bbF_2$-algebras! The non-uniqueness at the odd primes~$p$ comes from the~$\bbF_p^\times$-action on the~$1$-dimensional~$\bbF_p$-vector space~$\rmH_1(\rmS^1)$. It seems fair to say that this is well under control.


\subsection{Applications to lifts of~\texorpdfstring{$\rmH\bbZ/p$}{HZ/p}}

It may be worthwhile to point out that the same process that produces~$\bbSp$ from~$\bbS$ does not lead to~$\rmH\bbZ/p$ when applied to~$\rmH\bbZ$. More precisely, if we kill~$p$ in~$\bbZ=\pi_0\rmH\bbZ$ to form the~$E_\infty$ ring spectrum
\[
\rmH\bbZ\modmod p\simeq\rmH\bbZ\wedge\bbSp
\]
then there is an~$E_\infty$ map
\[
\rmH\bbZ\modmod p\longrightarrow\rmH\bbZ/p,
\]
but this is not an equivalence:

\begin{proposition}
The spectra~$\rmH\bbZ\modmod p$ and~$\rmH\bbZ/p$ are not equivalent.
\end{proposition}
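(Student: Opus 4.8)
The plan is to compare the mod-$p$ homologies of the two spectra and exhibit a structural difference coming from Dyer--Lashof operations. By Corollary~\ref{cor:A-algebra_equivalence} applied with $A = \rmH\bbF_p$ (using that $\rmH\bbZ\modmod p \simeq \rmH\bbZ\wedge\bbSp$ has characteristic $p$), we have an isomorphism of $\rmH\bbF_p$-algebras
\[
\rmH_*(\rmH\bbZ\modmod p) \cong \rmH_*(\rmH\bbZ)\otimes_{\bbF_p}\rmH_*(\bbP\rmS^1),
\]
where $\rmH_*(\bbP\rmS^1)$ is the free Araki--Kudo--Dyer--Lashof algebra on a degree-$1$ class. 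In particular the homotopy, equivalently the homology, of $\rmH\bbZ\modmod p$ is nonzero in arbitrarily high degrees: the free allowable $R$-algebra on a class $x_1$ in degree~$1$ contains, for instance, the iterated operations $Q^{i}\cdots x_1$, so $\rmH_*(\rmH\bbZ\modmod p)$ is unbounded above. On the other hand $\rmH\bbZ/p$ is discrete, so $\rmH_*(\rmH\bbZ/p) = A_*$ (or $A_*/\!(\text{something})$), which is concentrated in nonnegative degrees but has $\rmH_n(\rmH\bbZ/p;\bbF_p)$ finite-dimensional and, more to the point, $\pi_n(\rmH\bbZ/p)=0$ for $n>0$ while $\pi_n(\rmH\bbZ\modmod p)\neq0$ for infinitely many $n$. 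That already gives the non-equivalence. So the key steps are: first, invoke Corollary~\ref{cor:A-algebra_equivalence} (or Proposition~\ref{prop:homotopy}) to see that $\pi_*(\rmH\bbZ\modmod p)$ is nonzero in positive degrees; second, observe $\pi_*(\rmH\bbZ/p)$ is concentrated in degree~$0$; conclude.

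A cleaner way to phrase the same argument, avoiding any reliance on later sections, is purely homological and works entirely in degree~$1$. The $E_\infty$ map $\rmH\bbZ\modmod p \to \rmH\bbZ/p$ would, if an equivalence, induce an isomorphism on $\rmH\bbZ$-homology. But $\rmH\bbZ_*(\rmH\bbZ/p)$ in degree~$0$ is $\bbZ/p$ and in degree~$1$ is $0$ (since $\rmH\bbZ\wedge\rmH\bbZ/p \simeq \rmH(\bbZ/p)$ as an $\rmH\bbZ$-module). Meanwhile $\rmH\bbZ_*(\rmH\bbZ\modmod p) \cong \rmH\bbZ_*(\bbP\rmS^1)$ by Corollary~\ref{cor:A-algebra_equivalence} with $A=\rmH\bbZ$ — wait, $\rmH\bbZ$ does \emph{not} have characteristic $p$, so that form of the corollary does not apply directly; instead use $\rmH\bbZ\modmod p \simeq \rmH\bbZ\wedge\bbSp$ and the computation $\rmH\bbZ_*(\bbSp)$. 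From \eqref{eq:H_1} and the integral analogue in Example~\ref{ex:homology_Bockstein}, $\rmH\bbZ_1(\bbSp)$ receives the degree-$1$ class and is seen to be nonzero after the Bockstein analysis there. So $\rmH\bbZ_1(\rmH\bbZ\modmod p) \neq 0 = \rmH\bbZ_1(\rmH\bbZ/p)$, contradicting the map being an equivalence.

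In the writeup I would lead with the homotopy-group version, since it is the most transparent: $\rmH\bbZ/p$ is an Eilenberg--Mac Lane spectrum, hence has homotopy concentrated in degree~$0$, whereas $\rmH\bbZ\modmod p \simeq \rmH\bbZ\wedge\bbSp$ has, by Corollary~\ref{cor:A-algebra_equivalence} with $A = \rmH\bbZ\modmod p$ itself (which does have characteristic $p$ by Proposition~\ref{prop:pi0} applied after smashing, or simply because $\pi_0 = \bbF_p$), homology $\rmH_*(\rmH\bbZ\modmod p)\cong\rmH_*(\rmH\bbZ\modmod p)\wedge\ldots$ — more simply: $\pi_*(\rmH\bbZ\wedge\bbSp)$ agrees with the integral homology $\rmH\bbZ_*(\bbSp)$, and since $\bbSp$ is a nontrivial connective spectrum which is not an integral Eilenberg--Mac Lane spectrum (its $\bbF_p$-homology is the free AKDL-algebra on a degree-$1$ class, hence infinite-dimensional in infinitely many degrees), $\rmH\bbZ_*(\bbSp)$ is nonzero in some positive degree. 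Therefore $\rmH\bbZ\modmod p$ is not discrete and cannot be equivalent to $\rmH\bbZ/p$.

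The main obstacle is purely bookkeeping: deciding which instance of Corollary~\ref{cor:A-algebra_equivalence} to invoke, since the most natural coefficient ring $\rmH\bbZ$ is \emph{not} of characteristic $p$, so one must either smash over $A = \rmH\bbZ\modmod p$ (legitimate, as that spectrum has $\pi_0 = \bbF_p$) or fall back on the explicit low-degree homology computations \eqref{eq:H_1} and Example~\ref{ex:homology_Bockstein}. Once the right form is pinned down, the non-equivalence is immediate from a single nonzero homotopy or homology group in positive degree. No serious computation is needed beyond noting that the free Araki--Kudo--Dyer--Lashof algebra on a degree-$1$ generator is not bounded above.
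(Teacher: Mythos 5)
Your lead argument (the homotopy-group version you say you would write up) is correct, but it is a genuinely different route from the paper's. You compare underlying homotopy groups: $\pi_*(\rmH\bbZ/p)$ is concentrated in degree~$0$, while $\pi_*(\rmH\bbZ\modmod p)\cong\rmH\bbZ_*(\bbSp)$ is not. The paper instead applies $\rmH\bbZ/p\wedge_{\rmH\bbZ}(-)$ to both sides: on $\rmH\bbZ\modmod p$ this gives $\rmH\bbZ/p\wedge\bbSp\simeq\rmH\bbZ/p\wedge\bbP\rmS^1$ by Proposition~\ref{prop:A-algebra_equivalence} (legitimate since $\ch(\rmH\bbZ/p)=p$, which neatly sidesteps the bookkeeping worry you raise about $\rmH\bbZ$ not having characteristic~$p$), and on $\rmH\bbZ/p$ a spectrum whose homotopy is $\Tor^{\bbZ}_*(\bbZ/p,\bbZ/p)$, concentrated in degrees~$0$ and~$1$; the outputs are visibly different. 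The paper's argument is shorter and needs nothing beyond Proposition~\ref{prop:A-algebra_equivalence}; yours exhibits the difference directly in the homotopy groups of the two spectra, but it needs the extra input that $\rmH\bbZ_*(\bbSp)$ is nonzero in some positive degree. That is true, but your justification should be tightened: argue by universal coefficients (if $\rmH\bbZ_n(\bbSp)=0$ for all $n>0$, then $\rmH_n(\bbSp;\bbF_p)=0$ for $n\geq2$, contradicting~\eqref{eq:H_iso}), or combine Theorem~\ref{thm:steinberger:splitting} with Proposition~\ref{prop:homotopy}. The clause ``$\bbSp$ is not an integral Eilenberg-Mac Lane spectrum because its $\bbF_p$-homology is infinite-dimensional'' proves neither what it claims nor what you need: $\rmH\bbF_p$ itself has infinite-dimensional $\bbF_p$-homology, and a connective spectrum with integral homology concentrated in degree~$0$ need not be Eilenberg-Mac Lane (the Moore spectrum $\rmS^0/p$ is an example).

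Drop the ``cleaner'' degree-$1$ variant: it is false. Indeed $\rmH\bbZ_1(\rmH\bbZ/p)=0$, but also $\rmH\bbZ_1(\bbSp)=0$, hence $\rmH\bbZ_1(\rmH\bbZ\modmod p)=0$, so degree~$1$ distinguishes nothing. Example~\ref{ex:homology_Bockstein} says the mod-$p$ homology Bockstein from degree~$1$ to degree~$0$ is an isomorphism for $\bbSp$; this means precisely that the class of~\eqref{eq:H_1} is the Bockstein witness of the torsion group $\bbZ/p=\rmH\bbZ_0(\bbSp)$ and does \emph{not} lift to integral homology---the opposite of what you infer from it. (Also, $\rmH\bbZ\wedge\rmH\bbZ/p$ is not $\rmH\bbZ/p$; its homotopy is the integral homology of $\rmH\bbF_p$, which is nonzero in degree $2p-2$; you presumably meant the smash over $\rmH\bbZ$.) In fact the first positive degree in which $\pi_*(\rmH\bbZ\modmod p)$ is nonzero, and hence the first degree in which the two spectra differ, is $2p-2$, not~$1$.
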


\begin{proof}
Let us apply the functor~$\rmH\bbZ/p\underset{\rmH\bbZ}{\wedge}?$ to both sides. By Proposition~\ref{prop:A-algebra_equivalence}, we get
\[
\rmH\bbZ/p\underset{\rmH\bbZ}{\wedge}(\rmH\bbZ\wedge\bbSp)\simeq\rmH\bbZ/p\wedge\bbSp\simeq\rmH\bbZ/p\wedge\bbP\rmS^1
\]
on the left hand side, and the spectrum~$\rmH\bbZ/p\underset{\rmH\bbZ}{\wedge}\rmH\bbZ/p$ with
\[
\pi_*(\rmH\bbZ/p\underset{\rmH\bbZ}{\wedge}\rmH\bbZ/p)=\Tor^{\bbZ}_*(\bbZ/p,\bbZ/p)
\]
on the right, and these are clearly different.
\end{proof}

It follows that the unit~$\bbS\to\bbSp$ is {\em not} a lift of the `extension'~$\bbZ\to\bbZ/p$ in the sense of~\cite[Definition~1]{SVW}. In fact, such a lift cannot exist: If~$R$ were a connective~$E_\infty$ ring spectrum such that~$\rmH\bbZ\wedge R\simeq\rmH\bbZ/p$, then~$R$ would have to be a Moore spectrum, in contradiction to the results in Section~\ref{sec:Moore}.


\subsection{Some homotopy groups of~\texorpdfstring{$\bbSp$}{S//p}}

The following is our device for passing from homology to homotopy.

\begin{proposition}\label{prop:poincare}
The Hurewicz map
$\pi_*(\bbSp)\to\rmH_*(\bbSp)$
is injective, and there is an isomorphism
$\rmH_*(\bbSp)\cong\pi_*(\bbSp)\otimes\rmH_*(\rmH\bbF_p)$.
\end{proposition}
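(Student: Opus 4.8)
The plan is to leverage the two facts already in hand: that $\bbSp$ is a graded Eilenberg--Mac Lane spectrum (the proposition just above, via Steinberger), and that $\pi_0(\bbSp)=\bbF_p$ (Proposition~\ref{prop:pi0}). First I would observe that $\pi_*(\bbSp)$ is a graded module over the graded ring $\pi_*(\bbSp)$, whose unit lies in $\pi_0(\bbSp)=\bbF_p$; hence every group $\pi_n(\bbSp)$ is an $\bbF_p$-vector space. Writing $V_*=\pi_*(\bbSp)$ for this graded $\bbF_p$-vector space, the Eilenberg--Mac Lane property yields an equivalence of spectra $\bbSp\simeq\rmH V_*$, and choosing a homogeneous $\bbF_p$-basis of $V_*$ refines it to an equivalence $\bbSp\simeq\bigvee_\alpha\Sigma^{|\alpha|}\rmH\bbF_p$ of spectra, indexed by basis elements $\alpha$ of degree $|\alpha|$.

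Next I would smash this equivalence with $\rmH\bbF_p$ and pass to homotopy. Since wedges are coproducts of spectra and homotopy groups commute with coproducts, $\rmH_*(\bbSp)=\pi_*(\rmH\bbF_p\wedge\bbSp)\cong\bigoplus_\alpha\Sigma^{|\alpha|}\rmH_*(\rmH\bbF_p)$, and regrouping the shifts into a single tensor factor identifies the right-hand side with $V_*\otimes_{\bbF_p}\rmH_*(\rmH\bbF_p)=\pi_*(\bbSp)\otimes\rmH_*(\rmH\bbF_p)$. (Alternatively, one may note that $\rmH V_*$ is additively an $\rmH\bbF_p$-module spectrum, so that $\rmH\bbF_p\wedge\rmH V_*\simeq(\rmH\bbF_p\wedge\rmH\bbF_p)\wedge_{\rmH\bbF_p}\rmH V_*$, and invoke the Künneth isomorphism over the field spectrum $\rmH\bbF_p$, where all higher $\Tor$-terms vanish.) This establishes the asserted isomorphism; I would point out explicitly that it depends on the chosen splitting and is not canonical, in line with the earlier discussion around \eqref{eq:H_iso}.

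For the injectivity of the Hurewicz map I would argue by naturality: under the chosen wedge decomposition, the Hurewicz transformation $\pi_*(\bbSp)\to\rmH_*(\bbSp)$ becomes the direct sum over $\alpha$ of the Hurewicz maps $\pi_*(\Sigma^{|\alpha|}\rmH\bbF_p)\to\rmH_*(\Sigma^{|\alpha|}\rmH\bbF_p)$, each of which is, up to the shift, the Hurewicz map of $\rmH\bbF_p$ itself, i.e.\ the unit inclusion $\bbF_p=\pi_0(\rmH\bbF_p)\hookrightarrow\rmH_0(\rmH\bbF_p)\subseteq\rmH_*(\rmH\bbF_p)$, which is injective because the unit of the dual Steenrod algebra is nonzero. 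A direct sum of injections is injective. The one subtlety I would take care to get right is that the equivalence $\bbSp\simeq\rmH V_*$ is only an equivalence of underlying spectra, not of ring spectra; but that is all that is needed, since $\rmH\bbF_p$-homology and the Hurewicz map depend only on the underlying spectrum, and the argument uses finiteness of type only through the Steinberger splitting, which has already been invoked.
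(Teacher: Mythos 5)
Your proposal is correct and follows essentially the same route as the paper, whose proof simply says the statement follows from $\bbSp$ being a graded Eilenberg--Mac Lane spectrum; you have just spelled out the details (the $\bbF_p$-linearity of $\pi_*(\bbSp)$ via $\pi_0(\bbSp)=\bbF_p$, the wedge splitting into suspensions of $\rmH\bbF_p$, and the resulting identification of the Hurewicz map with a sum of unit inclusions into the dual Steenrod algebra). Nothing further is needed.
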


\begin{proof}
This follows immediately from the fact that~$\bbSp$ is a graded Eilenberg-Mac Lane spectrum.
\end{proof}


As a consequence of Proposition~\ref{prop:poincare}, the Poincar\'e series of the homotopy of~$\bbSp$ is the quotient of the Poincar\'e series of the homology of~$\bbSp$ by the Poincar\'e series of the dual Steenrod algebra. The Poincar\'e series of the (dual) Steenrod algebra is well-known, and the Poincar\'e series of~$\rmH_*(\bbSp)\cong\rmH_*(\bbP\rmS^1)$ is a matter of combinatorics, because this algebra is free on admissible generators of prescribed excess. However, a closed formula does not seem to be in the literature, and we will not pursue this here, either. For our purposes, it will be sufficient to determine the  homotopy groups in low dimensions.

\begin{proposition}\label{prop:homotopy}
In low dimensions, the homotopy groups of~$\bbSp$ are as follows.
\[
\pi_n(\bbSp)\cong
\begin{cases}
\bbZ/p & \text{if }n=0\\
0 & \text{if }0<n<4(p-1)\\
\bbZ/p& \text{if }n=4(p-1)
\end{cases}
\]
\end{proposition}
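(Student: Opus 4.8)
The plan is to compute $\pi_*(\bbSp)$ through the range $* \leq 4(p-1)$ by combining the homology isomorphism $\rmH_*(\bbSp) \cong \rmH_*(\bbP\rmS^1)$ from \eqref{eq:H_iso} with the Hurewicz-type splitting of Proposition~\ref{prop:poincare}. Since $\bbSp$ is a graded Eilenberg--Mac Lane spectrum, its homotopy is determined by reading off the ``new'' classes in $\rmH_*(\bbSp)$, i.e.\ those not accounted for by the dual Steenrod algebra acting on lower homotopy. Concretely, I would argue inductively on degree: in each degree $n$, Proposition~\ref{prop:poincare} gives $\rmH_n(\bbSp) \cong \bigoplus_{i+j=n} \pi_i(\bbSp) \otimes \rmH_j(\rmH\bbF_p)$, so once $\pi_i(\bbSp)$ is known for $i < n$ the group $\pi_n(\bbSp)$ is whatever is left over in $\rmH_n(\bbSp)$ after subtracting the contributions of the lower homotopy.

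The first input is the additive structure of $\rmH_*(\bbP\rmS^1)$, the free Araki--Kudo--Dyer--Lashof algebra on a class $x$ in degree $1$ (see~\cite[II.4]{cohen+lada+may}, \cite{Baker:free}). I would enumerate its low-degree generators: the admissible monomials $Q^I x$ of excess $\leq 1$ together with products thereof. At $p=2$ the generators are $x$ (degree $1$), $x^2 = Q^1 x$ (degree $2$), $Q^2 x$ (degree $3$), $Q^3 x$ (degree $4$), and so on, so the Poincar\'e series starts $1 + t + t^2 + t^3 + 2t^4 + \cdots$; at odd $p$ the first classes above degree $1$ are $x^2$ in degree $2$ and then $\beta^\epsilon Q^s x$, with the bottom Dyer--Lashof class $Q^1 x$ living in degree $2p-1$ and its Bockstein-partner, and the first genuinely new homotopy class appearing in degree $2p-2$ or thereabouts. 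I would then compare against $\rmH_*(\rmH\bbF_p)$, whose bottom classes (the dual Steenrod algebra) begin in degree $1$ for $p=2$ (namely $\xi_1$) and in degree $2p-2$ for odd $p$ (namely $\tau_0$ in degree $1$ is absent—$\rmH_*(\rmH\bbF_p)$ for odd $p$ is $E(\tau_0,\dots)\otimes P(\xi_1,\dots)$ with $|\tau_0|=1$... I should be careful here). The upshot I expect is that the free Dyer--Lashof algebra on a degree-$1$ class has \emph{exactly} the Poincar\'e series of the dual Steenrod algebra in the range $0 < n < 4(p-1)$, forcing $\pi_n(\bbSp) = 0$ there, with the first discrepancy—one extra generator—appearing precisely in degree $4(p-1)$, giving $\pi_{4(p-1)}(\bbSp) \cong \bbZ/p$. (For $p=2$ this says $\pi_1 = \pi_2 = \pi_3 = 0$ and $\pi_4 = \bbZ/2$; for $p=3$, $\pi_n = 0$ for $0<n<8$ and $\pi_8 = \bbZ/3$.)

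The torsion claim—that these homotopy groups are $\bbZ/p$ rather than merely abelian groups of the right rank—requires a little more than Poincar\'e series bookkeeping. For $\pi_0 = \bbZ/p$ this is Proposition~\ref{prop:pi0}. For $\pi_{4(p-1)}$, I would run the mod-$p$ homology Bockstein spectral sequence, or equivalently use that $\bbSp \wedge \bbSp \simeq \bbSp \wedge \bbP\rmS^1$ (Proposition~\ref{prop:A-algebra_equivalence} with $A = \bbSp$) together with the integral homology computation $\rmH\bbZ_0(\bbSp) = \bbF_p$ from Example~\ref{ex:homology_Bockstein}: the generator of $\pi_{4(p-1)}$ is detected by a Dyer--Lashof operation applied to the bottom class, which carries a nontrivial Bockstein, pinning the group down to $\bbZ/p$. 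Alternatively, since the class in question is in the image of a power operation on the relation $p \simeq 0$, its order divides $p$, and nontriviality was already established by the Poincar\'e-series count.

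The main obstacle is the combinatorial identity in the middle paragraph: verifying that the Poincar\'e series of the free Dyer--Lashof algebra on a single degree-$1$ generator agrees with that of the dual Steenrod algebra up to degree $4(p-1)-1$ and first differs by exactly one in degree $4(p-1)$. This is a bare-hands check—list admissible monomials of bounded excess and weight in both algebras degree by degree—but it is the crux, since everything else (the Eilenberg--Mac Lane structure, the homology isomorphism, the Hurewicz splitting) is already in hand. One must be especially careful about the interaction of the Bockstein with the Dyer--Lashof operations at odd primes, where the relevant generators come in $\beta$-paired families, and about the low-degree behavior of $\rmH_*(\rmH\bbF_p)$ in the two cases $p=2$ and $p$ odd.
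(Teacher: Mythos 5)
Your strategy is the paper's own: identify $\rmH_*(\bbSp)\cong\rmH_*(\bbP\rmS^1)$ as the free Dyer--Lashof algebra on a degree-one class, and read off $\pi_*(\bbSp)$ degree by degree from the splitting of Proposition~\ref{prop:poincare} as whatever is left over after the dual Steenrod algebra acting on lower homotopy is accounted for. (Note also that the Hurewicz injectivity in Proposition~\ref{prop:poincare} already forces every homotopy group to have exponent $p$, so your separate Bockstein/power-operation discussion of torsion is not needed.) The problem is that the proposal stops exactly where the proof begins: the low-degree enumeration that you yourself single out as ``the crux'' is never carried out, and the fragments of it that you do write down are incorrect and would derail the count if taken at face value. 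At odd primes the generator $a\in\rmH_1(\bbP\rmS^1)$ sits in odd degree, so $a^2=0$ and there is \emph{no} class in degree $2$; if your claimed class ``$x^2$ in degree $2$'' existed, your method would immediately output $\pi_2(\bbSp)\neq0$, since the dual Steenrod algebra is trivial in degree $2$ for odd $p$. Conversely, the dual Steenrod algebra \emph{does} have a class in degree $1$, namely $\tau_0$, which is what matches $a$ --- your hesitation there is on the decisive point. And your remark that ``the first genuinely new homotopy class appears in degree $2p-2$ or thereabouts'' contradicts the statement being proved: $\beta Q^1a$ in degree $2p-2$ is matched by $\xi_1$ and contributes nothing new.

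The actual bookkeeping, which is the entire content of the proof, is short: through degree $4(p-1)$ the monomial basis of the dual Steenrod algebra is $1,\ \tau_0,\ \xi_1,\ \tau_0\xi_1,\ \tau_1,\ \tau_0\tau_1,\ \xi_1^2$, the corresponding classes in the free Dyer--Lashof algebra on $a$ are $1,\ a,\ \beta Q^1a,\ a\beta Q^1a,\ Q^1a,\ aQ^1a,\ (\beta Q^1a)^2$, and the one additional class in this range is $\beta Q^2a$, lying in degree $4(p-1)$ and linearly independent from $(\beta Q^1a)^2$; this yields the vanishing range and $\pi_{4(p-1)}(\bbSp)\cong\bbZ/p$. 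Your $p=2$ numerology is also off: the additive dimensions of $\rmH_*(\bbSt)$ begin $1,1,1,2,3,\dots$ (degree $3$ contains $x^3$ and $Q^2x$, degree $4$ contains $x^4$, $xQ^2x$, $Q^3x$), against $1,1,1,2,2,\dots$ for the dual Steenrod algebra, the single discrepancy being $Q^3x$ in degree $4=4(p-1)$ --- consistent with the conclusion you state, but not with the series you wrote. Until this enumeration is done correctly, the proposition has not been proved.
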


\begin{proof}
  Let me explain this for odd primes~$p$. The case~$p=2$ is
  similar but easier; see the proof of Proposition~\ref{prop:DyerLashof} for hints.
  
  In degrees at most~$4(p-1)$, the additive generators of the (dual)
  Steenrod algebra are
  \[
    1,\tau_0,\xi_1,\tau_0\xi_1,\tau_1,\tau_0\tau_1, \xi_1^2.
  \]
  In the free algebra over the Dyer-Lashof algebra on one
  generator~$a$ in dimension~$1$, the corresponding generators are 
  \[
    1, a, \beta Q^1a, a\beta Q^1a, Q^1a, aQ^1a, (\beta Q^1a)^2.
  \]
  But, these are not the only classes in degrees at
  most~$4(p-1)$: in degree~$4(p-1)$ itself, there is not only~$(\beta
  Q^1a)^2$ but also~$\beta Q^2a$, and these two are linearly
  independent.
\end{proof}


\subsection{An application to~\texorpdfstring{$E_\infty$}{Einfty} self-maps of~\texorpdfstring{$\bbSp$}{S//p}}

The equivalence~$\calE_\infty(\bbSp,\bbSp)\simeq\Omega^{\infty+1}(\bbSp)$ from Corollary~\ref{cor:not_contractible} implies that there are isomorphisms~$\pi_n\calE_\infty(\bbSp,\bbSp)\cong\pi_{n+1}(\bbSp)$ for all~$n\geqslant0$. In particular, we have~$\pi_0\calE_\infty(\bbSp,\bbSp)=\pi_1(\bbSp)=0$ by the preceding Proposition~\ref{prop:homotopy}, which also yields the non-triviality of some higher homotopy groups.

\begin{corollary}\label{cor:id_S}
  Every~$E_\infty$ self-map of~$\bbSp$ is~$E_\infty$ homotopic to the
  identity. But, the space of~$E_\infty$ self-map of~$\bbSp$ is not contractible: The first non-trivial homotopy group is in dimension~$4p-5$.
\end{corollary}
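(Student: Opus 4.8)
The plan is to deduce Corollary~\ref{cor:id_S} directly from the identification $\calE_\infty(\bbSp,\bbSp)\simeq\Omega^{\infty+1}(\bbSp)$ of Corollary~\ref{cor:not_contractible} together with the homotopy groups computed in Proposition~\ref{prop:homotopy}. Under this equivalence one has $\pi_n\calE_\infty(\bbSp,\bbSp)\cong\pi_{n+1}(\bbSp)$ for all $n\geqslant 0$, so the set of path components $\pi_0\calE_\infty(\bbSp,\bbSp)$ is $\pi_1(\bbSp)$, which vanishes by Proposition~\ref{prop:homotopy}. Hence the identity self-map represents the unique component, and every $E_\infty$ self-map of $\bbSp$ is $E_\infty$-homotopic to it.

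For the non-contractibility, I would invoke the same shift of homotopy groups: the first nonzero homotopy group of $\calE_\infty(\bbSp,\bbSp)$ sits in the degree $n$ for which $\pi_{n+1}(\bbSp)$ is first nonzero with $n\geqslant 1$. By Proposition~\ref{prop:homotopy} the homotopy groups of $\bbSp$ vanish in the range $0<m<4(p-1)$ and $\pi_{4(p-1)}(\bbSp)\cong\bbZ/p$ is nonzero. Therefore $\pi_n\calE_\infty(\bbSp,\bbSp)\cong\pi_{n+1}(\bbSp)$ is first nonzero (for $n\geqslant 1$) at $n+1=4(p-1)$, i.e.\ at $n=4(p-1)-1=4p-5$, which gives both the non-contractibility and the stated dimension $4p-5$ of the first nontrivial homotopy group.

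There is essentially no obstacle here; the only point requiring a moment's care is the bookkeeping around the loop-space shift and the constraint $n\geqslant 1$ (the $n=0$ group is the already-used $\pi_1(\bbSp)=0$), so that one correctly lands on $4p-5$ rather than $4p-4$. One should also note that $4(p-1)\geqslant 4$ for every prime $p$, so the vanishing range $0<m<4(p-1)$ is nonempty and the argument that the space is connected is not vacuous. Everything else is a direct transcription of Corollaries~\ref{cor:not_contractible} and Proposition~\ref{prop:homotopy}.
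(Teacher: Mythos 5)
Your argument is correct and coincides with the paper's own reasoning: the paper also deduces the corollary from the equivalence $\calE_\infty(\bbSp,\bbSp)\simeq\Omega^{\infty+1}(\bbSp)$ of Corollary~\ref{cor:not_contractible}, giving $\pi_n\calE_\infty(\bbSp,\bbSp)\cong\pi_{n+1}(\bbSp)$, together with $\pi_1(\bbSp)=0$ and $\pi_{4(p-1)}(\bbSp)\cong\bbZ/p$ from Proposition~\ref{prop:homotopy}. Your degree bookkeeping ($n=4(p-1)-1=4p-5$) matches the paper exactly.
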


In particular, there is an essential~$3$-sphere in the space of~$E_\infty$ self-map of~$\bbSt$.

In~\cite[Definition~2.8]{Hu+Kriz+May}, a connective commutative~$\bbS$-algebra~$E$ whose unit~\hbox{$\bbS\to E$} induces an isomorphism on underlying rings is called {\it atomic} if every self-map of~$\bbS$-algebras~\hbox{$E\to E$} is a weak equivalence. This property is already useful in the case when the unit is only surjective, and the following statement uses that terminology in this broader sense.

\begin{corollary}
For each prime~$p$, the commutative~$\bbS$-algebra~$\bbSp$ is atomic.
\end{corollary}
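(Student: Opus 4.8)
The plan is to deduce this directly from Corollary~\ref{cor:id_S}. Recall that ``atomic'' in the broad sense used here means: the unit $\bbS\to\bbSp$ is surjective on underlying rings, and every $\bbS$-algebra self-map $\bbSp\to\bbSp$ is a weak equivalence. The first condition is immediate from Proposition~\ref{prop:pi0}, since $\pi_0(\bbSp)=\bbF_p$ is a quotient of $\pi_0(\bbS)=\bbZ$. For the second condition, the subtlety is that Corollary~\ref{cor:id_S} concerns $E_\infty$ (commutative $\bbS$-algebra) self-maps, whereas the definition of atomic asks about $\bbS$-algebra self-maps; \emph{a priori} the latter is a larger class. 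But here $\bbSp$ is a \emph{commutative} $\bbS$-algebra, so the two notions coincide: a self-map of $\bbS$-algebras is by default (per the Conventions section, all algebras are commutative) a self-map of commutative $\bbS$-algebras, i.e.\ an $E_\infty$ self-map.

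So the core step is: let $f\colon\bbSp\to\bbSp$ be any $E_\infty$ self-map. By Corollary~\ref{cor:id_S}, $f$ is $E_\infty$-homotopic to the identity; in particular $f$ is homotopic as a map of underlying spectra to $\id_{\bbSp}$, hence induces the identity on all homotopy groups, hence is a weak equivalence. That verifies the defining property of atomicity.

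I would write this as a short two-line argument: ``By Proposition~\ref{prop:pi0} the unit $\bbS\to\bbSp$ is surjective on $\pi_0$, and by Corollary~\ref{cor:id_S} every $E_\infty$ self-map of $\bbSp$ is homotopic to the identity, hence a weak equivalence.'' The only point that deserves a word of care — and the one I expect a careful reader to want addressed — is precisely the identification of $\bbS$-algebra self-maps with $E_\infty$ self-maps in this commutative setting; since the paper has already fixed the convention that all algebra spectra are commutative, this is a matter of terminology rather than content, and I would simply note that Corollary~\ref{cor:id_S} applies verbatim. There is no real obstacle here: the corollary has done all the work, and this statement is a direct corollary in the literal sense.
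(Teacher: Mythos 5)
Your proof is correct and matches the paper's (implicit) argument: the paper states this as an immediate consequence of Corollary~\ref{cor:id_S}, exactly as you do, with the surjectivity of the unit on $\pi_0$ supplied by Proposition~\ref{prop:pi0} and with ``self-map of $\bbS$-algebras'' meaning commutative ($E_\infty$) self-maps per the paper's conventions. No gaps.
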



\subsection{Initial commutative~\texorpdfstring{$\bbS$}{S}-algebras of prime characteristic}

These do not exist! While the ordinary commutative ring~$\bbF_p$ is initial among commutative rings of characteristic~$p$, there is no commutative~$\bbS$-algebra of characteristic~$p$ which is initial in the homotopical sense. This is the content of the following result.

\begin{proposition}\label{prop:not_initial}
If~$p$ is a prime number, then there is no commutative~$\bbS$-algebra~$T$ with~$\ch(T)=p$ such that the derived space of commutative~$\bbS$-algebra maps~$T\to E$ is contractible for all commutative~$\bbS$-algebras~$A$ with~$\ch(E)=p$.
\end{proposition}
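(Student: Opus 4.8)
The plan is to argue by contradiction, showing that any such hypothetical initial object $T$ would have to be equivalent, as a commutative $\bbS$-algebra, to the versal example $\bbSp$, and then to derive a contradiction from the fact, already recorded in Corollary~\ref{cor:not_contractible} and Corollary~\ref{cor:id_S}, that $\calE_\infty(\bbSp,\bbSp)$ is not contractible.

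So suppose $T$ is a commutative $\bbS$-algebra with $\ch(T)=p$ such that $\calE_\infty(T,E)\simeq *$ for every commutative $\bbS$-algebra $E$ with $\ch(E)=p$. First I would apply the hypothesis with $E=\bbSp$, which has characteristic $p$ by Proposition~\ref{prop:pi0}: this yields a map $\alpha\colon T\to\bbSp$, unique up to homotopy. Next, since $\ch(T)=p$, Proposition~\ref{prop:maps_to_E} identifies $\calE_\infty(\bbSp,T)$ with $\Omega^{\infty+1}(T)$, which is non-empty, so I may choose a map $\beta\colon\bbSp\to T$. Now I would inspect the two composites. Both $\beta\circ\alpha$ and $\id_T$ lie in $\calE_\infty(T,T)$, which is contractible by hypothesis (again using $\ch(T)=p$), hence $\beta\circ\alpha\simeq\id_T$. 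Both $\alpha\circ\beta$ and $\id_{\bbSp}$ lie in $\calE_\infty(\bbSp,\bbSp)$; this space is \emph{not} contractible, but it is connected, since every $E_\infty$ self-map of $\bbSp$ is homotopic to the identity by Corollary~\ref{cor:id_S}, and therefore $\alpha\circ\beta\simeq\id_{\bbSp}$ as well.

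Consequently $\alpha\colon T\to\bbSp$ is an equivalence of commutative $\bbS$-algebras, with homotopy inverse $\beta$. Pre- and post-composition with $\alpha$ and $\beta$ then exhibit a homotopy equivalence $\calE_\infty(T,T)\simeq\calE_\infty(\bbSp,\bbSp)$ of derived mapping spaces. By Corollary~\ref{cor:not_contractible} the right-hand side is $\Omega^{\infty+1}(\bbSp)$, and this is not contractible: its homotopy group in degree $4p-5$ is $\pi_{4(p-1)}(\bbSp)\cong\bbZ/p$ by Proposition~\ref{prop:homotopy}. This contradicts the assumption that $\calE_\infty(T,T)\simeq *$, and the proposition follows.

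The argument is essentially bookkeeping with derived mapping spaces (one must keep source objects cofibrant before forming $\calE_\infty(-,-)$, and track that composites are well defined up to homotopy), and the only step requiring genuine input beyond formal manipulation is the passage from the one-sided retraction $\beta\circ\alpha\simeq\id_T$ to an honest equivalence. That is precisely where the connectivity of $\calE_\infty(\bbSp,\bbSp)$ — equivalently $\pi_1(\bbSp)=0$, from Corollary~\ref{cor:id_S} — is used, and it is the one place where an initial object of characteristic $p$ is genuinely obstructed rather than merely formally ruled out, so I expect this to be the main (modest) obstacle. An equivalent alternative would be to stop at the retraction, observe that $T$ is then a graded Eilenberg-Mac Lane spectrum whose homotopy is a retract of $\pi_*(\bbSp)$, and contradict contractibility of $\calE_\infty(T,T)$ via Proposition~\ref{prop:homotopy} directly; routing through the self-mapping space is shorter and reuses exactly the computations already in hand.
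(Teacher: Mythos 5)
Your argument is correct and is essentially the paper's own proof: both use the hypothesis to get a map $T\to\bbSp$ and Proposition~\ref{prop:maps_to_E} to get a map $\bbSp\to T$, show the two composites are homotopic to the identities via contractibility of $\calE_\infty(T,T)$ and Corollary~\ref{cor:id_S}, conclude $T\simeq\bbSp$, and then contradict the assumed contractibility using the non-contractibility of $\calE_\infty(\bbSp,\bbSp)\simeq\Omega^{\infty+1}(\bbSp)$ from Corollary~\ref{cor:not_contractible} and Proposition~\ref{prop:homotopy}. The only cosmetic difference is where the final contradiction is located (at $\calE_\infty(T,T)$ rather than at $\calE_\infty(\bbSp,\bbSp)$ directly), which is immaterial.
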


\begin{proof}
Assume that there were such a commutative~$\bbS$-algebra~$T$. Because we have assumed~$\ch(T)=p$, there is a structure map~$s\colon\bbSp\to T$. We will first prove that this is an equivalence. By hypothesis, the mapping spaces
\begin{equation}\label{eq:t}
\calE_\infty(T,\bbSp)\simeq*
\end{equation}
and
\begin{equation}\label{eq:id_T}
\calE_\infty(T,T)\simeq*
\end{equation}
are contractible. We will use \eqref{eq:t} to pick a map~$t\colon T\to\bbSp$ of commutative~$\bbS$-algebras. This is an inverse (up to homotopy) of~$s$: By \eqref{eq:id_T} and Corollary~\ref{cor:id_S}, both compositions~$st$ and~$ts$ are homotopic to the identities via maps of commutative~$\bbS$-algebras. This implies that~($T^\cof$ and then)~$T$ is equivalent to~$\bbSp$. 

Thus, if~$T$ exists, then~$T\simeq\bbSp$. But, we have already seen that~$\bbSp$ does not satisfy the strong uniqueness as in the statement of the proposition. For example, the derived space of commutative~$\bbS$-algebra maps of~$\bbSp\to E$ is not contractible for~\hbox{$E=\bbSp$} itself, by Corollary~\ref{cor:id_S} above.
\end{proof}


\section{The versal examples as Thom spectra}\label{sec:Thom}

The aim of this section is to identify the spectra~$\bbSp$ for the various primes
with certain~$E_\infty$ Thom spectra. This is clearly useful, as it
will allow us to relate the spectra~$\bbSp$ to other Thom spectra, such as the unoriented bordism spectrum~$\MO$ if~$p=2$, and it will also allow for the description of the Hochschild and Andr\'e-Quillen invariants of the versal examples. 

\subsection{Thom spectra}

There is a Thom spectrum~$\M_f$ associated with every stable spherical fibration, classified by a map~$f\colon X\rightarrow \rmB\GL_1(\bbS)$, on a connected space~$X$. In order to deal with the~$\bbSp$ for odd primes~$p$, we also require the generalized Thom spectra of~\cite{Blumberg}, and~\cite{ABGHR}, where~$\bbS$ can be replaced by the~$p$-local or~$p$-adic sphere.

\begin{example}\label{ex:MO_as_Thom}
The original and most prominent example of a Thom spectrum is certainly given by the embedding
$\BO\to\rmB\GL_1(\bbS)$ of the linear isomorphisms into the homotopy equivalences: This gives rise to the spectrum~$\MO$ for unoriented bordism.
\end{example}

\begin{example}\label{ex:sphere_as_Thom}
If~$X$ is a point, then the Thom spectrum~$\M_f$ is equivalent to the sphere spectrum~$\bbS$. More generally, if~$f$ is null-homotopic, so that it classifies the trivial bundle, then the Thom spectrum is equivalent to the suspension spectrum~$\bbS\wedge X_+$. 
\end{example}

\begin{example}\label{ex:Moebius_as_Thom}
The generator of~$\pi_1\rmB\GL_1(\bbS)=\bbZ/2$ classifies the (stable) M\"obius bundle,
and the associated Thom spectrum is the Moore spectrum~$\rmS^0/2$ at the prime~$p=2$.
\end{example}

If~$X$ is an infinite loop space and~$f$ is an infinite loop map, then the Thom spectrum~$\M_f$ is an~$E_\infty$ ring spectrum. See~\cite[IX.7]{lewis}. This applies in Example~\ref{ex:MO_as_Thom}
as well as in Example~\ref{ex:sphere_as_Thom}. But, if~$X$ is only a two-fold loop space and~$f$ is a two-fold loop map, then we can only infer that~$\M_f$ is a commutative ring spectrum up to homotopy.

\begin{example}\label{ex:Mahowald}
We may apply this to the two-fold delooping~\hbox{$\Omega^2 \rmS^3\rightarrow\BO$} of the
previous Example~\ref{ex:Moebius_as_Thom}. The resulting Thom spectrum is known to be the
Eilenberg-Mac Lane spectrum~$\rmH\bbF_2$. See~\cite{mahowald}. Note that this
turns out to admit an~$E_\infty$ multiplication, but this is not clear from its
construction as a Thom spectrum. 
\end{example}


\subsection{The examples~\texorpdfstring{$\bbSt$}{S//2} and~\texorpdfstring{$\bbSp$}{S//p}}

We are now able to show that the examples~$\bbSt$ and~$\bbSp$ can be realized as Thom spectra.

\begin{theorem}\label{thm:Thom_2}
The spectrum~$\bbSt$ is the~$E_\infty$ Thom spectrum of the infinite delooping
\[
\rmQ(\rmS^1)\to\rmB\GL_1(\bbS)
\]
of the classifying map of the M\"obius bundle.
\end{theorem}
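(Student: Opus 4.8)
The plan is to exhibit $\bbSt$ as an $E_\infty$ Thom spectrum by combining the universal property of $\bbSt$ from Definition~\ref{def:SSp} with the universal property of Thom spectra over $\rmB\GL_1(\bbS)$. Write $\mu\colon\rmS^1\to\rmB\GL_1(\bbS)$ for the classifying map of the (stable) M\"obius bundle, i.e.\ the generator of $\pi_1\rmB\GL_1(\bbS)\cong\bbZ/2$, as in Example~\ref{ex:Moebius_as_Thom}. Since $\rmB\GL_1(\bbS)$ is an infinite loop space, the free infinite loop map on $\mu$ is a map $\rmQ(\rmS^1)=\Omega^\infty\Sigma^\infty\rmS^1\to\rmB\GL_1(\bbS)$; let $M$ denote its $E_\infty$ Thom spectrum, which exists and is an $E_\infty$ ring spectrum by \cite[IX.7]{lewis} since $\rmQ(\rmS^1)$ is an infinite loop space and the map is an infinite loop map. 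The goal is to produce an equivalence $M\simeq\bbSt$ of commutative $\bbS$-algebras.

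First I would recall (from the theory of $E_\infty$ Thom spectra, e.g.\ \cite{ABGHR}, or more elementarily here) the universal property: for a commutative $\bbS$-algebra $E$, an $E_\infty$ ring map $M\to E$ is the same datum as a null-homotopy of the composite $\rmQ(\rmS^1)\to\rmB\GL_1(\bbS)\to\rmB\GL_1(E)$, equivalently (by adjunction $\calS_\infty(\rmS^1,-)\cong\Omega^\infty\Sigma^{-1}(-)$ applied to the fibre sequence $\GL_1(E)\to E$, or just by the universal property of $\rmQ$) a null-homotopy of the composite $\rmS^1\xrightarrow{\mu}\rmB\GL_1(\bbS)\to\rmB\GL_1(E)$. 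But $\mu$ is the image of $-1\in\pi_0\bbS^\times=\pi_0\GL_1(\bbS)$ under the boundary map $\pi_0\GL_1(\bbS)\to\pi_1\rmB\GL_1(\bbS)$, so its image in $\pi_1\rmB\GL_1(E)$ is the class of $-1\in\pi_0\GL_1(E)\subseteq\pi_0(E)$. A null-homotopy of that class in $\rmB\GL_1(E)$ is a path from $-1$ to $1$ in $\GL_1(E)$, i.e.\ (after shifting) a path from $1$ to $1$ in $E$ whose ``difference'' is $-1$ versus... more cleanly: such a null-homotopy is precisely a null-homotopy of $2=1-(-1)$ through units, which, since the relevant component data only involves $\pi_0$, amounts to a null-homotopy of $p=2$ in $E$, i.e.\ exactly the datum classified by $E_\infty$ maps $\bbSt\to E$ via Proposition~\ref{prop:maps_to_E}. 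Making this comparison of universal properties precise — identifying the mapping space $\calE_\infty(M,E)$ with the homotopy fibre over the relevant point of $\calS_\infty(\rmS^0,E)=\Omega^\infty E$, and matching it with $\calE_\infty(\bbSt,E)$ — yields by Yoneda an equivalence $M\simeq\bbSt$.

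Alternatively, and perhaps more in the spirit of the paper, I would avoid $\rmB\GL_1$ mapping-space bookkeeping and instead build $M$ directly as a homotopy pushout of commutative $\bbS$-algebras matching Definition~\ref{def:SSp}: the M\"obius classifying map $\rmS^1\to\rmB\GL_1(\bbS)$, being the image of the element $2\in\pi_0\bbS$ (equivalently $-1$ up to the standard identification) under $\bbP\rmS^0\simeq\Sigma^\infty_+\rmQ\rmS^0\to\ldots$, exhibits the associated $E_\infty$ Thom spectrum as the pushout of $\bbP\rmS^0\xrightarrow{\ev(2)}\bbS$ along $\bbP\rmS^0\xrightarrow{\ev(0)}\bbS$ — here one uses that the Thom spectrum of the trivial (null-homotopic) map on $\rmQ\rmS^1$ is $\bbS\wedge(\rmQ\rmS^1)_+\simeq\bbP\rmS^1$ on the one hand and $\bbS$ on the other, and that forming Thom spectra, being a left adjoint / colimit-type construction, turns the pushout $\rmQ\rmS^1\leftarrow\rmQ(\ast)=\ast$ ... — compare the iterated-pushout description in Proposition~\ref{prop:iterated}. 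Either route reduces the theorem to a manipulation of universal properties; the real content is the first one.

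The main obstacle, and the step I would spend the most care on, is pinning down the identification of $\mu\colon\rmS^1\to\rmB\GL_1(\bbS)$ with the element ``$2$'' in the correct sense — that is, checking that looping/delooping and the adjunction $\calE_\infty(\bbP\rmS^0,-)\cong\Omega^\infty(-)$ really do match the canonical path-component datum defining $\bbSt$ (a null-homotopy $p\simeq 0$) with the canonical datum defining a point of the Thom mapping space (a null-homotopy of $\mu$ composed into $\rmB\GL_1(E)$). Concretely one must verify that under $\pi_1\rmB\GL_1(\bbS)\cong\pi_0\GL_1(\bbS)/(\text{something})\cong\{\pm1\}$, the nontrivial class goes to $-1\in(\pi_0\bbS)^\times$, hence the induced obstruction in any $E$ of characteristic $2$ is killed exactly when $2=0$; this is essentially \cite[IX.7]{lewis} combined with the standard fact that $\rmB\GL_1(\bbS)$ has $\pi_1=\bbZ/2$ detected by the sign of a unit. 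Everything else — the existence of $M$ as an $E_\infty$ ring spectrum, the computation of Thom spectra of null-homotopic maps, and the Yoneda step — is formal once this bookkeeping is in place.
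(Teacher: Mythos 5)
Your proposal is correct in outline, but your primary route is genuinely different from the paper's. The paper's proof is a one-step application of Lewis's theorem (\cite{lewis}, Theorem~7.1, p.~444): the Thom spectrum of $\rmQ(f)$ is the free $E_\infty$ ring spectrum on the Thom spectrum of $f$, with the base cell acting as the unit; since the Thom spectrum of the M\"obius map $\rmS^1\to\rmB\GL_1(\bbS)$ is the Moore spectrum $\rmS^0/2$, this exhibits the Thom spectrum of $\rmQ(\rmS^1)\to\rmB\GL_1(\bbS)$ as the pushout of~$\bbS\xleftarrow{\ev(1)}\bbP\rmS^0\to\bbP(\rmS^0/2)$, which is~$\bbSt$ by Proposition~\ref{prop:iterated}. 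You instead invoke the orientation-theoretic universal property of $E_\infty$ Thom spectra (\cite{ABGHR}): $\calE_\infty(\M_{\rmQ(f)},E)$ is the space of infinite-loop null-homotopies of the composite into $\rmB\GL_1(E)$, which by freeness of $\rmQ(\rmS^1)$ is the space of paths from $-1$ to $1$ in $\GL_1(E)$, and you match this against $\calE_\infty(\bbSt,E)$ (Proposition~\ref{prop:maps_to_E}) and conclude by Yoneda. This is a legitimate alternative and it explains conceptually why the answer is $\bbSt$ (both sides corepresent ``a chosen null-homotopy of $2$''), but it costs two things the paper's argument avoids: the heavier ABGHR universal property in place of Lewis's classical theorem, and a naturality check you correctly flag but should not underestimate --- an objectwise bijection of mapping spaces does not suffice for Yoneda, so the passage from ``path from $-1$ to $1$ in $\GL_1(E)$'' to ``path from $2$ to $0$ in $\Omega^\infty E$'' must be made natural in $E$, e.g.\ by rewriting both sides as spaces of null-homotopies of the spectrum map $2\colon\rmS^0\to E$ (note also $\pi_1\rmB\GL_1(\bbS)\cong\pi_0\GL_1(\bbS)=\{\pm1\}$ with no quotient). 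Your sketched ``alternative route'' is essentially the paper's proof, but as written it is incomplete: the precise input needed there is Lewis's identification of $\M_{\rmQ(f)}$ as the free $E_\infty$ ring on $\M_f$ with base cell acting as unit, not a general claim that the Thom construction preserves pushouts, and the comparison is then with the iterated pushout of Proposition~\ref{prop:iterated} rather than directly with Definition~\ref{def:SSp}.
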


\begin{proof}
  The free~$E_\infty$-spectrum on the Thom spectrum of~$f$ is the Thom
  spectrum of~$\rmQ(f)$, see~\cite{lewis}, Theorem 7.1 on page
  444. Beware that the base cell acts as a unit. Therefore, the identification of the Thom spectrum of the essential map~$\rmS^1\to\BO$ with the Moore spectrum, and the
  description of~$\bbSt$ as an iterated pushout from
  Proposition~\ref{prop:iterated} immediately imply the result.
\end{proof}

An argument due to Hopkins, see~\cite{Mahowald+Ravenel+Shick} or~\cite{Blumberg}, allows us to extend the preceding result to odd primes. We need to know that, also for these primes~$p$, the Moore spectrum~$\rmS^0/p$ is the Thom spectrum of a map
\begin{equation}\label{eq:hopkins map}
f\colon\rmS^1\longrightarrow\rmB\GL_1(\bbS_p),
\end{equation}
where the target now classifies stable~$p$-adic spherical fibrations, and such an~$f$ is a class in~\hbox{$\pi_1(\rmB\GL_1(\bbS_p))\cong\bbZ_p^\times$}, the group of~$p$-adic units. To obtain the Moore spectrum we may chose~$f$ to be a representative of the unit~$1-p$. Now a similar argument as above implies that~$\bbSp$ can be obtained as the Thom
spectrum of the infinite delooping of the map~\eqref{eq:hopkins map}.

\begin{theorem}\label{thm:Thom_odd}
The spectrum~$\bbSp$ is the~$E_\infty$ Thom spectrum of the infinite delooping
\[
\rmQ(\rmS^1)\to\rmB\GL_1(\bbS_p)
\]
of the map~$f$ such that~$\M_f$ is the Moore spectrum.
\end{theorem}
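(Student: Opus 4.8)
The plan is to mirror the proof of Theorem~\ref{thm:Thom_2} verbatim, replacing the sphere spectrum~$\bbS$ by the~$p$-adic sphere~$\bbS_p$ throughout, so the real content has already been assembled in the paragraph preceding the statement. First I would recall that, by the argument attributed to Hopkins, the Moore spectrum~$\rmS^0/p$ is realized as the Thom spectrum~$\M_f$ of the map~$f\colon\rmS^1\to\rmB\GL_1(\bbS_p)$ representing the~$p$-adic unit~$1-p\in\pi_1(\rmB\GL_1(\bbS_p))\cong\bbZ_p^\times$; this is exactly the input that the generalized Thom spectrum machinery of~\cite{Blumberg} and~\cite{ABGHR} is set up to handle, since there~$\bbS$ may be replaced by the~$p$-local or~$p$-adic sphere. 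Then I would invoke the fact, cited from~\cite[Theorem~7.1]{lewis} in the proof of Theorem~\ref{thm:Thom_2}, that the free~$E_\infty$-ring on a Thom spectrum~$\M_f$ is the Thom spectrum of the infinite delooping~$\rmQ(f)$ of~$f$, with the caveat that the base cell functions as the unit. Applying this to our~$f$ identifies the~$E_\infty$ Thom spectrum of~$\rmQ(\rmS^1)\to\rmB\GL_1(\bbS_p)$ with the free~$E_\infty$-ring~$\bbP(\rmS^0/p)$ on the Moore spectrum, again with the unit cell collapsed.

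The remaining step is to match this against the description of~$\bbSp$ as an iterated homotopy pushout. By Proposition~\ref{prop:iterated}, $\bbSp$ is the pushout of~$\bbS\leftarrow\bbP(\rmS^0/p)\xrightarrow{\ev(1)}\bbS$ — equivalently, $\bbSp$ is obtained from~$\bbP(\rmS^0/p)$ by collapsing the unit summand — which is precisely the identification just obtained for the Thom spectrum of the infinite delooping. Concretely I would phrase the comparison so that the homotopy pushout that glues in the basepoint cell of~$\rmS^0/p$ (making the ``base cell act as a unit'') is literally the same homotopy pushout as the right-hand square in Proposition~\ref{prop:iterated}. This gives an equivalence of~$E_\infty$ ring spectra between~$\bbSp$ and the~$E_\infty$ Thom spectrum of~$\rmQ(\rmS^1)\to\rmB\GL_1(\bbS_p)$, which is the assertion.

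The main obstacle is not any one computation but rather making sure the foundational Thom-spectrum formalism genuinely extends from~$\bbS$ to~$\bbS_p$: one needs that~$\rmB\GL_1(\bbS_p)$ is an infinite loop space, that the Thom spectrum construction is~$E_\infty$-multiplicative for infinite loop maps into it, and that the ``free~$E_\infty$-ring equals Thom spectrum of the infinite delooping'' statement of~\cite{lewis} remains valid in this~$p$-adic setting. These are exactly the points addressed in~\cite{Blumberg} and~\cite{ABGHR}, so I would cite those works for the requisite generalization rather than reproving anything; with that machinery in place the proof is a one-line transcription of the proof of Theorem~\ref{thm:Thom_2}. A secondary, purely bookkeeping concern is the bundle-theoretic identification of~$\M_f$ with~$\rmS^0/p$: one must check that the specific~$p$-adic unit~$1-p$ (as opposed to some other unit) yields the Moore spectrum rather than a different~$p$-adic Moore-type spectrum, which is the reason the statement is phrased as ``the map~$f$ such that~$\M_f$ is the Moore spectrum'' in the first place.
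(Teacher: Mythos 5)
Your proposal is correct and follows essentially the same route as the paper, whose proof is exactly the paragraph preceding the statement: Hopkins' identification of~$\rmS^0/p$ as the Thom spectrum of a map~$\rmS^1\to\rmB\GL_1(\bbS_p)$ representing the unit~$1-p$, combined with the same argument as for Theorem~\ref{thm:Thom_2} (Lewis' theorem that the free~$E_\infty$ spectrum on~$\M_f$ is the Thom spectrum of~$\rmQ(f)$, with the base cell acting as a unit, matched against Proposition~\ref{prop:iterated}). Only a notational slip: $\bbSp$ is the pushout of~$\bbS\xleftarrow{\ev(1)}\bbP\rmS^0\to\bbP(\rmS^0/p)$, not a pushout along~$\bbP(\rmS^0/p)$, but your subsequent identification of the ``base cell acts as a unit'' pushout with the right-hand square of Proposition~\ref{prop:iterated} shows you mean the correct diagram.
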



\subsection{The topological Hochschild homology of~\texorpdfstring{$\bbSp$}{S//p}}

Recall that the topological Hochschild homology spectrum~$\THH^{\bbS}(E)$ of a commutative~$\bbS$-algebra~$E$ is an important invariant of~$E$, not the least because it is an approximation to the algebraic K-theory of~$E$. We will now determine it for~$\bbSt$ and~$\bbSp$, based on the identification of these spectra as~$E_\infty$ Thom spectra, and general results due to Blumberg~\cite{Blumberg} which apply for this class.

\begin{theorem}\label{thm:THH}
For each prime number~$p$ there is an equivalence 
\[
\THH^{\bbS}(\bbSp)\simeq\bbSp\wedge\rmQ(\rmS^2)_+
\]
of spectra.
\end{theorem}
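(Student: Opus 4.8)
The plan is to reduce the computation of $\THH^{\bbS}(\bbSp)$ to the Thom-spectrum description furnished by Theorems~\ref{thm:Thom_2} and~\ref{thm:Thom_odd}, and then to invoke Blumberg's general formula for the topological Hochschild homology of an $E_\infty$ Thom spectrum. Concretely, by those theorems $\bbSp\simeq\M_g$ is the $E_\infty$ Thom spectrum of an infinite loop map $g\colon\rmQ(\rmS^1)\to\rmB\GL_1(\bbS_{(p)})$ obtained as the infinite delooping of a map $f\colon\rmS^1\to\rmB\GL_1(\bbS_p)$ (for $p=2$ one works integrally, for $p$ odd $p$-adically). The first step is therefore simply to record that $\THH^{\bbS}$ of an $E_\infty$ Thom spectrum $\M_g$ on a map $g\colon X\to\rmB\GL_1(\bbS)$ out of a grouplike $E_\infty$-space $X$ is again a Thom spectrum, namely $\M_{g\circ\pi}$ where $\pi\colon X\wedge\rmS^1_+\simeq LX\to X$ is the evaluation/projection from the free loop space — this is the main result of~\cite{Blumberg} (the ``Thom isomorphism'' form: $\THH^{\bbS}(\M_g)\simeq\M_g\wedge (BX)_+$ when $g$ is an infinite loop map, since the relevant bundle over $BX$ becomes trivial after Thomification, or more precisely $\THH^{\bbS}(\M_g)\simeq\M_g\wedge X$ in the stable setting where $X$ is the underlying infinite loop space).

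The second step is the bookkeeping with classifying spaces of infinite loop spaces. Here $X=\rmQ(\rmS^1)=\Omega^\infty\Sigma^\infty\rmS^1$, the free infinite loop space on $\rmS^1$. Its classifying space (delooping as an infinite loop space) is $BX\simeq\Omega^\infty\Sigma^\infty\rmS^2=\rmQ(\rmS^2)$, since delooping the free infinite loop space $\Omega^\infty\Sigma^\infty Y$ yields $\Omega^\infty\Sigma^\infty(\Sigma Y)=\rmQ(\Sigma Y)$, and $\Sigma\rmS^1=\rmS^2$. Feeding this into Blumberg's formula gives
\[
\THH^{\bbS}(\bbSp)\simeq\THH^{\bbS}(\M_g)\simeq\M_g\wedge (BX)_+\simeq\bbSp\wedge\rmQ(\rmS^2)_+,
\]
which is exactly the claimed equivalence. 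For odd $p$ one must be a little careful that the construction takes place over $\rmB\GL_1(\bbS_p)$ rather than $\rmB\GL_1(\bbS)$, but Blumberg's results are stated at this level of generality (the generalized Thom spectra of~\cite{ABGHR}, \cite{Blumberg}), so the same formula applies verbatim with $\bbS$ replaced by $\bbS_p$; and since $\bbSp$ is already built $p$-locally this does not affect the final answer.

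The main obstacle is making the invocation of Blumberg's theorem precise: one needs that $\bbSp$ is realized as the $E_\infty$ Thom spectrum of an \emph{infinite loop} map (not merely a two-fold loop map), which is precisely what Theorems~\ref{thm:Thom_2} and~\ref{thm:Thom_odd} provide via the ``free $E_\infty$-spectrum'' construction of~\cite{lewis}, so this hypothesis is in hand. One should also double-check the precise form of Blumberg's statement — whether it computes $\THH^{\bbS}(\M_g)$ as $\M_g$ smashed with $(BX)_+$ or with $BX$ (a unit/basepoint discrepancy) — and confirm that in the grouplike infinite loop space setting the correct answer carries the disjoint basepoint, giving $\rmQ(\rmS^2)_+$ rather than $\rmQ(\rmS^2)$. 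Once the correct bookkeeping of deloopings and basepoints is fixed, the proof is immediate; there is no hard homotopy-theoretic input beyond citing Blumberg.
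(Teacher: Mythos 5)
Your argument is essentially the paper's own proof: identify $\bbSp$ as the $E_\infty$ Thom spectrum of the infinite delooping $\rmQ(\rmS^1)\to\rmB\GL_1(\bbS)$ (resp.\ $\rmB\GL_1(\bbS_p)$), apply Blumberg's theorem $\THH^{\bbS}(\M_f)\simeq\M_f\wedge\rmB X_+$ for good maps of group-like $E_\infty$ spaces (which works the same over $\bbS_p$), and note $\rmB\rmQ(\rmS^1)\simeq\rmQ(\rmS^2)$. Your basepoint worry resolves as you expect --- Blumberg's formula carries the disjoint basepoint, giving $\rmQ(\rmS^2)_+$ --- so the proposal matches the paper's proof in all essentials (the stray aside ``$\THH^{\bbS}(\M_g)\simeq\M_g\wedge X$'' is the cotangent-complex formula, not the $\THH$ one, but it plays no role in your argument).
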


\begin{proof}
The topological Hochschild homology of commutative~$\bbS$-algebras which are Thom spectra has been determined by Blumberg, see~\cite[Theorem 1.5]{Blumberg}. He shows that, if~$f\colon X\to\rmB\GL_1(\bbS)$ 
is a map of~$E_\infty$ spaces which is good (a fibration, for example) and such that~$X$ is a cofibrant and group-like~$E_\infty$-space, then there is a weak equivalence of commutative~$\bbS$-algebras as follows.
\[
\THH^{\bbS}(\M_f)\simeq\M_f\wedge\rmB X_+
\]
If~$X$ is only a group like~$E_2$-space, and~$f$ is only a good map of~$E_2$-spaces, then Blumberg can still show that there is an equivalence of spectra as above, provided that a least the homotopy commutative multiplication on~$\M_f$ admits an~$E_\infty$ refinement, see~\cite[Theorem 1.6]{Blumberg}.

The spectra~$\bbSt$ and~$\bbSp$ in question have been identified as Thom spectra in Theorems~\ref{thm:Thom_2} and~\ref{thm:Thom_odd}. Once we have replaced the relevant map~$f$ by a fibration, we may apply this theory, which works the same if~$\bbS$ is replaced by~$\bbS_p$. We obtain an equivalence
\[
\THH^{\bbS}(\bbSp)\simeq\bbSp\wedge\rmB\rmQ(\rmS^1)_+
\]
of spectra, and it remains to note that~$\rmB\rmQ(\rmS^1)\simeq\rmQ(\rmS^2)$.
\end{proof}

An extra argument is needed to obtain an equivalence of commutative~$\bbS$-algebras in Theorem~\ref{thm:THH}: One has to ensure that the fibrant replacement is still sufficiently well-behaved with respect to the smash product. However, this will not be used in the following.

To round off the discussion of topological Hochschild homology, let us also remind ourselves that the Thom spectrum of the canonical map~$\BO\to\rmB\GL_1(\bbS)$ is~$\MO$, so that Blumberg obtains equivalences
\begin{equation}\label{eq:thh_MO}
\THH^{\bbS}(\MO)\simeq\MO\wedge\rmB\BO_+\simeq\MO\wedge\widetilde{\UmodO}_+
\end{equation}
of commutative~$\bbS$-algebras, where~$\widetilde{\UmodO}$ is the universal cover of~$\UmodO$. He also obtains equivalences
\begin{equation}\label{eq:thh_HF}
\THH^{\bbS}(\rmH\bbF_p)\simeq\rmH\bbF_p\wedge\Omega\rmS^3_+
\end{equation}
of spectra, which shed new light on B\"okstedt's calculation of~$\THH^{\bbS}_*(\rmH\bbF_p)$.


\subsection{The cotangent complex of~\texorpdfstring{$\bbSp$}{S//p}}

We will now determine the topological Andr\'e-Quillen invariants of~$\bbSt$ and~$\bbSp$. As with our calculation of the topological Hochschild invariants, this can be based on the identification of these spectra as~$E_\infty$ Thom spectra, and general results which apply for this class, this time due to Basterra and Mandell~\cite{Basterra+Mandell}. However, we will also present a more direct approach which leads to the same result.

Recall that the Andr\'e-Quillen invariants of an extension~$F/E$ are defined using the~$F$-module contangent complex~$\Omega_E(F)$ which classifies derivations. The result for~$\bbSt$ and~$\bbSp$ (over~$\bbS$) is as follows.

\begin{theorem}\label{thm:TAQ_SSp}
For each prime number~$p$ there is an equivalence
\[
\Omega_{\bbS}(\bbSp)\simeq\Sigma\bbSp
\]
of~$\bbSp$-modules.
\end{theorem}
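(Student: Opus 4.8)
The plan is to exploit the description of $\bbSp$ as an $E_\infty$ Thom spectrum from Theorems~\ref{thm:Thom_2} and~\ref{thm:Thom_odd}, together with the general computation of topological André-Quillen homology (equivalently, the cotangent complex) of Thom spectra due to Basterra and Mandell. Recall that if $f\colon X\to\rmB\GL_1(\bbS)$ is an infinite loop map, so that $\M_f$ is an $E_\infty$ ring spectrum, then the cotangent complex $\Omega_{\bbS}(\M_f)$ is equivalent, as an $\M_f$-module, to $\M_f\wedge\Sigma X$, i.e.\ the shift of the suspension spectrum of $X$ smashed up to $\M_f$. (Heuristically: $\M_f$ is built from $\bbS$ by one ``$E_\infty$-cell attachment'' per point of $X$, so the module of derivations is the shifted free $\M_f$-module on $X$.) Applying this with $X=\rmQ(\rmS^1)$, the group-completion of the free $E_\infty$-space on $\rmS^1$, and $\M_f=\bbSp$, gives
\[
\Omega_{\bbS}(\bbSp)\simeq\bbSp\wedge\Sigma^\infty\rmQ(\rmS^1).
\]
Since $\rmQ(\rmS^1)=\Omega^\infty\Sigma^\infty\rmS^1$ has $\Sigma^\infty\rmQ(\rmS^1)\simeq\Sigma^\infty\rmS^1=\rmS^1$ as the stabilization is a retract splitting off the bottom cell and, more to the point, the cotangent complex only sees the ``linearization'' $\Sigma^\infty_+ X$ reduced appropriately — here the relevant invariant is the spectrum underlying $X$ as an infinite loop space, which is $\rmS^1$ — we obtain $\Omega_{\bbS}(\bbSp)\simeq\bbSp\wedge\rmS^1=\Sigma\bbSp$, as claimed. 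The odd-primary case is identical after replacing $\bbS$ by $\bbS_p$ throughout, exactly as in the proof of Theorem~\ref{thm:THH}.

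Alternatively — and this is the ``more direct approach'' alluded to in the text — one can argue straight from the defining homotopy pushout of Definition~\ref{def:SSp}. The cotangent complex is a ``stabilization of the identity'' functor and sends homotopy pushouts of commutative $\bbS$-algebras to homotopy pushouts (homotopy cofiber sequences) of modules, after base change. For the free algebra one has $\Omega_{\bbS}(\bbP X)\simeq\bbP X\wedge X$; in particular $\Omega_{\bbS}(\bbP\rmS^0)\simeq\bbP\rmS^0\wedge\rmS^0=\bbP\rmS^0$, and $\Omega_{\bbS}(\bbS)\simeq 0$. Base-changing the pushout square of Definition~\ref{def:SSp} to $\bbSp$-modules and applying $\Omega_{\bbS}(-)$ yields a homotopy pushout square of $\bbSp$-modules whose two ``maps out of $\bbSp\wedge_{\bbP\rmS^0}\Omega_{\bbS}(\bbP\rmS^0)\simeq\bbSp$'' land in $0$ and $0$; hence $\Omega_{\bbS}(\bbSp)$ is the homotopy pushout of $0\leftarrow\bbSp\to 0$, which is $\Sigma\bbSp$. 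One should note that this is the one-cell analogue of the standard computation $\Omega_{\bbS}(\bbS^0/p)$ for associative algebras, now carried out in the $E_\infty$ world where attaching a single $E_\infty$-cell still contributes exactly one (shifted) free $\bbSp$-module summand.

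The main obstacle is not the formal manipulation but getting the identification of the relevant space clean: in the Thom-spectrum route one must be careful that Basterra-Mandell's theorem, as in Blumberg's adaptation, really yields $\Omega_{\bbS}(\M_f)\simeq\M_f\wedge\Sigma X$ with $X=\rmQ(\rmS^1)$ \emph{as an infinite loop space}, and that the resulting $\bbSp$-module is $\Sigma\bbSp$ rather than $\bbSp\wedge\Sigma^\infty_+\rmQ(\rmS^1)$ — i.e.\ one must use that the cotangent complex of a Thom spectrum depends only on the underlying \emph{connective spectrum} classified by $f$ (here $\rmS^1$), not on the space $\rmQ(\rmS^1)$ with its extra homotopy. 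In the direct route the analogous subtlety is checking that $\Omega_{\bbS}$ genuinely takes the defining pushout to a pushout of $\bbSp$-modules after the appropriate base change, which is where one invokes the basic properties of the cotangent complex (flat base change and cofiber-sequence compatibility) established by Basterra. Once that is granted, either computation gives the result, and the two agreeing is a satisfying consistency check. As in Theorem~\ref{thm:THH}, promoting this to an equivalence of more structured objects would need an extra argument, but it is not needed here.
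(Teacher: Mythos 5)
Your proposal in fact contains both of the paper's arguments, and the one that carries the theorem for all primes is essentially the paper's own: the paper proves the general case by applying Basterra's flat base change to the defining pushout of Definition~\ref{def:SSp} to get $\Omega_{\bbS}(\bbSp)\simeq\Omega_{\bbP\rmS^0}(\bbS)\wedge_{\bbP\rmS^0}\bbS$, and then the transitivity cofiber sequence for $\bbS\to\bbP\rmS^0\to\bbS$ together with $\Omega_{\bbS}(\bbP\rmS^0)\simeq\bbP\rmS^0$ and $\Omega_{\bbS}(\bbS)\simeq\ast$ to conclude $\Omega_{\bbS}(\bbSp)\simeq\Sigma\bbSp$. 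Your second route --- the statement that $\Omega_{\bbS}(-)$ turns the pushout square into a pushout of $\bbSp$-modules after base change, which collapses to $0\leftarrow\bbSp\to 0$ --- is exactly this argument repackaged (the pushout formula is derived from flat base change plus transitivity, which you correctly cite), so it is complete and correct.

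Two caveats on your Thom-spectrum route, which is the paper's proof only for $p=2$. First, the intermediate claims are wrong as literally stated: Basterra--Mandell gives $\Omega_{\bbS}(\M_f)\simeq\M_f\wedge T$ where $f$ is an $E_\infty$ map defined on $\Omega^\infty T$ for a connective spectrum $T$, not $\M_f\wedge\Sigma X$ or $\M_f\wedge\Sigma^\infty\rmQ(\rmS^1)$; and $\Sigma^\infty\rmQ(\rmS^1)$ is certainly not $\rmS^1$ (by the Snaith splitting it has infinitely many cells). You do state the correct dependence on the underlying spectrum $T=\rmS^1$ in your final paragraph, so this is sloppiness rather than a fatal error, but the displayed formulas should be replaced by the correct one. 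Second, the odd-primary case is \emph{not} ``identical after replacing $\bbS$ by $\bbS_p$'': the paper points out that Basterra--Mandell's theorem has not been extended to Thom spectra of $p$-adic spherical fibrations in the sense of Blumberg, which is precisely why it supplies the base-change/transitivity proof for all primes. So your first route, as written, does not cover odd $p$; your second route does, and that is what saves the proposal.
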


\begin{proof}[Proof of Theorem~\ref{thm:TAQ_SSp} for~$p=2$ using Thom technology]
In general, if~$T$ is a connective spectrum, and if~$f\colon\Omega^\infty(T)\to\rmB\GL_1(\bbS)$ is an~$E_\infty$ map, then a corollary of~\cite[Theorem~5]{Basterra+Mandell} identifies the cotangent complex of the~$E_\infty$ Thom spectrum~$\M_f$ of~$f$: There is an equivalence
\[
\Omega_{\bbS}(\M_f)\simeq\M_f\wedge T
\]
of~$\bbS$-modules. We may apply this theory to the spectrum~$\bbSt$, because is has been identified as a Thom spectrum of this type in Theorem~\ref{thm:Thom_2}.
 In this case, we may take~\hbox{$T=\rmS^1$} so that we obtain~$\Omega^\infty(T)=\rmQ(\rmS^1)$, and the result follows.
\end{proof}

The preceding proof would immediately generalize to the case of odd primes~$p$ as soon as the work of Basterra and Mandell would be extended to cover Thom spectra for~$p$-adic spherical fibrations as in Blumberg's work. For the time being, we will here provide for another proof which uses more traditional techniques associated with cotangent complexes.

\begin{proof}[Proof of Theorem~\ref{thm:TAQ_SSp} for all primes~$p$ using base change and transitivity]
Since~$\bbSp$ is defined as a homotopy pushout (Definition~\ref{def:SSp}), the flat base change formula~\cite[Proposition~4.6]{Basterra} applies to give an equivalence
\[
\Omega_{\bbS}(\bbSp)\simeq\Omega_{\bbP\rmS^0}(\bbS)\wedge_{\bbP\rmS^0}\bbS
\]
of~$\bbSp$-modules. Since it will be important to keep track of the maps~$\bbP\rmS^0\to\bbS$ of~$E_\infty$ algebras involved, let us agree that we use~$\ev(0)$ in~$\Omega_{\bbP\rmS^0}(\bbS)$ and~$\ev(p)$ in the base change~\hbox{$?\wedge_{\bbP\rmS^0}\bbS$}.

In order to determine~$\Omega_{\bbP\rmS^0}(\bbS)$, we may invoke the transitivity exact sequence~\cite[Proposition~4.2]{Basterra}. For the extensions~$\bbS\to\bbP\rmS^0\to\bbS$, it 
yields a fibration sequence
\[
\Omega_{\bbS}(\bbP\rmS^0)\wedge_{\bbP\rmS^0}\bbS
\longrightarrow
\Omega_{\bbS}(\bbS)
\longrightarrow
\Omega_{\bbP\rmS^0}(\bbS)
\]
in spectra. Now the middle term~$\Omega_{\bbS}(\bbS)$ is contractible. In general, the cotangent complex~$\Omega_{\bbS}(\bbP T)\simeq\bbP T\wedge T$ of the free commutative~$\bbS$-algebra on~$T$ is the free~$\bbP T$-module on~$T$, see~\cite[Example~3.8]{Kuhn},~\cite[Proposition 1.6]{Baker+Gilmour+Reinhard}, and the Appendix to~\cite{Baker:free}. In particular, there is an equivalence~$\Omega_{\bbS}(\bbP\rmS^0)\simeq\bbP\rmS^0\wedge\rmS^0\simeq\bbP\rmS^0$. This shows
\[
\Omega_{\bbP\rmS^0}(\bbS)\simeq\Sigma(\bbP\rmS^0\wedge_{\bbP\rmS^0}\bbS)\simeq\Sigma\bbS
\]
as~$\bbP\rmS^0$-modules, so that
\[
\Omega_{\bbP\rmS^0}(\bbS)\wedge_{\bbP\rmS^0}\bbS\simeq\Sigma(\bbS\wedge_{\bbP\rmS^0}\bbS)\simeq\Sigma\bbSp
\]
as~$\bbSp$-modules. As often before, we have used that the forgetful functor from commutative~$\bbS$-algebras to spectra is a right adjoint, so that it commutes with limits. Here, this determines the homotopy type of the pushouts involved.
\end{proof}

In~\cite[Definition 3.1]{Baker+Gilmour+Reinhard}, a~$p$-local commutative~$\bbS$-algebra with a CW structure is called {\it minimal} if for each~$n$ the inclusion of the~$n$-skeleton induces an isomorphism in topological Andr\'e-Quillen homology~$\TAQ^{\bbS}_n(?;\bbF_p)$.

\begin{corollary}
For each prime~$p$ the commutative~$\bbS$-algebra~$\bbSp$ is minimal.
\end{corollary}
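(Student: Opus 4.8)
The plan is to unwind the definition of \emph{minimal} from \cite[Definition 3.1]{Baker+Gilmour+Reinhard} and check it directly against the cell structure of $\bbSp$ that we already have in hand. Recall that $\bbSp$ is built as a CW commutative $\bbS$-algebra: from Proposition~\ref{prop:iterated} (or directly from Definition~\ref{def:SSp} realized via $\bbP(\rmS^0\to\rmC\rmS^0)$), it is obtained from $\bbS$ by attaching $E_\infty$-cells, starting with a single $1$-cell that kills $p$. Minimality asks that for every $n$ the inclusion of the $n$-skeleton $(\bbSp)^{(n)}\hookrightarrow\bbSp$ induce an isomorphism on $\TAQ^{\bbS}_n(?;\bbF_p)$; equivalently, that the boundary maps of cells in the CW filtration are trivial in mod-$p$ topological Andr\'e-Quillen homology, so that $\TAQ^{\bbS}_*(\bbSp;\bbF_p)$ is a free $\bbF_p$-module on the cells.

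The key input is Theorem~\ref{thm:TAQ_SSp}, which gives an equivalence $\Omega_{\bbS}(\bbSp)\simeq\Sigma\bbSp$ of $\bbSp$-modules. Applying $\bbF_p$-homology of $\bbSp$-modules (i.e.\ smashing with $\rmH\bbF_p$ over $\bbSp$, or just taking ordinary mod-$p$ homology of the underlying spectrum) and using the defining relation $\TAQ^{\bbS}_*(\bbSp;\bbF_p)=\pi_*(\rmH\bbF_p\wedge_{\bbSp}\Omega_{\bbS}(\bbSp))$, we get
\[
\TAQ^{\bbS}_*(\bbSp;\bbF_p)\cong\pi_*\bigl(\rmH\bbF_p\wedge_{\bbSp}\Sigma\bbSp\bigr)\cong\pi_*(\Sigma\rmH\bbF_p),
\]
so $\TAQ^{\bbS}_*(\bbSp;\bbF_p)$ is one-dimensional over $\bbF_p$, concentrated in degree $1$. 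This matches exactly the fact that $\bbSp$ has a single cell in degree~$1$ (beyond the base $0$-cell, which is the unit and contributes nothing to $\TAQ$); there is simply no room for nontrivial attaching behavior, so the skeletal inclusions are forced to be isomorphisms on $\TAQ^{\bbS}_n(?;\bbF_p)$ in every degree.

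Thus the steps are: (i) recall the CW structure of $\bbSp$ and the definition of minimality; (ii) invoke Theorem~\ref{thm:TAQ_SSp} to compute $\TAQ^{\bbS}_*(\bbSp;\bbF_p)\cong\bbF_p$ in degree $1$ and $0$ otherwise; (iii) observe that this is precisely the free $\bbF_p$-module on the cells of $\bbSp$, so each skeletal inclusion is a $\TAQ^{\bbS}_n(?;\bbF_p)$-isomorphism, which is the definition of minimal. The only mild subtlety—and the step most worth stating carefully—is matching the CW filtration used in \cite{Baker+Gilmour+Reinhard} with the iterated-pushout presentation here, i.e.\ checking that the realization of the homotopy pushout in Definition~\ref{def:SSp} via $\bbP(\rmS^0\to\rmC\rmS^0)$ is a CW commutative $\bbS$-algebra with one $0$-cell and one $1$-cell in the relevant low range; once that bookkeeping is in place, the degree count from Theorem~\ref{thm:TAQ_SSp} does all the real work.
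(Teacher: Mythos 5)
Your proposal is correct and follows essentially the same route as the paper: invoke Theorem~\ref{thm:TAQ_SSp} to see that $\TAQ^{\bbS}_*(\bbSp;\bbF_p)$ is a single $\bbZ/p$ in degree~$1$, then compare with the two-cell $E_\infty$ CW structure of $\bbSp$ to check the skeletal condition of minimality. The extra bookkeeping you flag about matching the homotopy-pushout presentation with the CW filtration is exactly the ``easily checked'' step the paper leaves to the reader.
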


\begin{proof}
By Theorem~\ref{thm:TAQ_SSp}, we have
\[
\TAQ^{\bbS}(\bbSp;\bbF_p)
=\Omega_{\bbS}(\bbSp)\wedge_{\bbSp}\rmH\bbF_p
\simeq\Sigma\bbSp\wedge_{\bbSp}\rmH\bbF_p
\simeq\Sigma\rmH\bbF_p,
\]
so that
\[
\TAQ^{\bbS}_n(\bbSp;\bbF_p)=\pi_n\TAQ^{\bbS}(\bbSp;\bbF_p)\cong
\begin{cases}
\bbZ/p& n=1\\
0& n\not=1.
\end{cases}
\]
Since~$\bbSp$, by definition, has a CW structure with only two~$E_\infty$ cells, minimality is now easily checked.
\end{proof}

To round off the discussion of the cotangent complex, let us also note that Basterra and Mandell obtain an equivalence
\begin{equation}\label{eq:taq_MO}
\Omega_{\bbS}(\MO)\simeq\MO\wedge\bo,
\end{equation}
of~$\MO$-modules, where~$\bo$ is the~connective cover of the real topological K-theory spectrum that has~$\Omega^\infty\bo=\BO$. This is a corollary of~\cite[Theorem 5]{Basterra+Mandell}, stated in {\it loc.cit.} for the complex bordism spectrum, but the real case is similar.


\section{Applications}\label{sec:applications}

In this section, we collect some applications, with an emphasis on the relationship to the unoriented bordism spectrum~$\MO$.


\subsection{Non-existence of~\texorpdfstring{$E_\infty$}{Einfty} maps}

The starting point for our applications is the following result.

\begin{proposition}\label{prop:Baker}
There does not exist an~$E_\infty$ map~$\rmH\bbF_2\to\MO$.
\end{proposition}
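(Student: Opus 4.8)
The plan is to derive a contradiction from the existence of such an $E_\infty$ map by combining two facts: the computation of the mod-$2$ homology of $\MO$ as a ring, and the rigidity forced by the Dyer–Lashof action on $\rmH\bbF_2$. First I would recall that $\rmH_*(\MO;\bbF_2)$ is a polynomial algebra on generators in every degree not of the form $2^k-1$, and that $\rmH_*(\rmH\bbF_2;\bbF_2)$ — the dual Steenrod algebra — is a polynomial algebra on the Milnor generators $\xi_i$ in degrees $2^i-1$. An $E_\infty$ map $\rmH\bbF_2\to\MO$ would induce a map of $\rmH\bbF_2$-homology algebras $\rmH_*(\rmH\bbF_2;\bbF_2)\to\rmH_*(\MO;\bbF_2)$ compatible with all Dyer–Lashof operations. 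I would then examine what this map can do in the low degrees where the dual Steenrod algebra sits (degrees $1,3,7,\dots$), where $\MO$ has \emph{no} algebra indecomposables, so the image of each $\xi_i$ must be decomposable.

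The heart of the argument is to track a single well-chosen Dyer–Lashof relation. In the dual Steenrod algebra one has the standard formula expressing $\xi_{i+1}$ (or more conveniently its conjugate $\bar\xi_{i+1}$) in terms of Dyer–Lashof operations applied to $\bar\xi_i$ — concretely $Q^{2^i}\bar\xi_i = \bar\xi_{i+1}$ up to lower terms, with $\bar\xi_1$ itself hit by $Q^1$ of the degree-$1$ class. Meanwhile, on the $\MO$ side, the bottom class $\xi_1\in\rmH_1$ must map to a decomposable element of $\rmH_1(\MO)=0$, hence to zero; then every $Q^J$ applied to it is zero, so the whole subalgebra generated by the $\bar\xi_i$ under Dyer–Lashof operations lands in the ideal generated by whatever the degree-$1$ class maps to — which is the zero ideal. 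This would force the unit map $\rmH\bbF_2\to\MO$ to be trivial on positive-degree homology, contradicting that it is a unital ring map detecting $1\in\rmH_0$ but nothing else, i.e.\ it would factor through $\rmH\bbZ/2\to\rmH\bbF_2$ trivially — more precisely, one gets that the composite $\bbS\to\rmH\bbF_2\to\MO$ and the map $\rmH\bbF_2\to\MO$ agree on homology, which is absurd since the former kills all positive-degree homology classes while any $E_\infty$ ring map must be injective in the relevant range by the freeness of $\rmH_*(\rmH\bbF_2)$ over the Dyer–Lashof algebra. I would phrase this last step using Steinberger's computation (\cite[III.2]{steinberger}) that $\rmH\bbF_2$ is, as an $E_\infty$-ring, generated under Dyer–Lashof operations by a single class in degree $1$, so that \emph{any} $E_\infty$ map out of $\rmH\bbF_2$ is determined by where that class goes; and in $\MO$ it must go to zero.

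The main obstacle I anticipate is making the Dyer–Lashof bookkeeping on the $\MO$ side genuinely rigorous: one must be sure that \emph{no} decomposable element in the appropriate degree of $\rmH_*(\MO)$ can serve as the image of the degree-$1$ generator in a way consistent with $Q^1$ being an $E_\infty$-ring operation, and this requires knowing the Dyer–Lashof action on $\rmH_*(\MO)$ well enough — or, alternatively, exploiting that $\rmH_1(\MO)=0$ outright so there is simply nowhere for $\xi_1$ to go. The latter is the clean route: since $\rmH_1(\MO;\bbF_2)=0$ (indeed $\pi_1\MO=0$ and $\MO$ is a wedge of Eilenberg–Mac Lane spectra with no degree-$1$ summand), the generator must map to zero, and then Steinberger's generation result finishes it immediately. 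So the real content is isolating the degree-$1$ obstruction and citing the generation statement; everything else is formal. I would also remark that an alternative, perhaps cleaner, packaging is available through $\bbSt$ together with Theorem~\ref{thm:TAQ_SSp}, comparing cotangent complexes, but the homological argument above is the most self-contained.
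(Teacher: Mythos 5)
Your overall strategy---obstructing via Dyer--Lashof operations on mod~$2$ homology---is the right kind of argument (the paper itself does not prove this proposition but cites Gilmour's thesis, Baker, and the power-operation argument of Hu--Kriz--May, which is exactly of this type). However, your execution rests on a false computation: you conflate $\pi_*(\MO)$ with $\rmH_*(\MO;\bbF_2)$. It is the homotopy ring $\pi_*(\MO)\cong\bbF_2[x_i:i\neq 2^k-1]$ that has no generators in degrees $2^k-1$ and vanishes in degree~$1$; the mod~$2$ homology is $\rmH_*(\MO;\bbF_2)\cong\rmH_*(\BO;\bbF_2)\cong\bbF_2[e_1,e_2,\dots]$ with a polynomial generator in \emph{every} positive degree (this is quoted in the paper's proof of Proposition~\ref{prop:DyerLashof}). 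In particular $\rmH_1(\MO;\bbF_2)\cong\bbZ/2$, not $0$; the fact that $\MO$ splits as a wedge of suspensions of $\rmH\bbF_2$ with no $1$-dimensional summand does not help, since the bottom summand already contributes $\xi_1$ to $\rmH_1$. So your ``clean route'' ($\xi_1$ has nowhere to go) collapses. In fact the opposite is forced: composing a hypothetical unital map $\rmH\bbF_2\to\MO$ with the Thom class $\MO\to\rmH\bbF_2$ gives a unital self-map of $\rmH\bbF_2$, which must be the identity, so the induced map on homology is injective and $\xi_1$ must map to $e_1\neq 0$.

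Your fallback justification---that any $E_\infty$ map out of $\rmH\bbF_2$ is injective on homology ``by the freeness of $\rmH_*(\rmH\bbF_2)$ over the Dyer--Lashof algebra''---is also incorrect: the dual Steenrod algebra is \emph{generated} over the Dyer--Lashof algebra by $\xi_1$ but is far from free on it; the free algebra on a $1$-dimensional class is $\rmH_*(\bbP\rmS^1)\cong\rmH_*(\bbSt)$, which the paper shows is strictly larger (for instance $Q^3a$ and $a^4$ are linearly independent there, whereas the corresponding classes in the dual Steenrod algebra are not). Injectivity has to come from the Thom-class splitting described above, not from freeness. Once $\xi_1\mapsto e_1$ is forced, the genuine content of the proposition is to derive a contradiction by comparing Steinberger's formulas for the Dyer--Lashof action on the dual Steenrod algebra with the Kochman--Priddy formulas for the action on $\rmH_*(\BO)$, transported to $\rmH_*(\MO)$ via Lewis's result that the Thom isomorphism commutes with the operations. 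That bookkeeping is exactly the step your proposal defers and then discharges with the false claim $\rmH_1(\MO;\bbF_2)=0$; as written the proof does not go through, and to repair it you would need to carry out (or cite, as the paper does) that computation.
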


According to~\cite{Baker+Richter:Thom}, this is shown in Gilmour's thesis, generalizing an argument with power operations by Hu, Kriz, and May~\cite{Hu+Kriz+May}. See~\cite{Baker:free} for a proof. Since there does exit an~$E_\infty$ map~$\bbSt\to\MO$ by Example~\ref{ex:Eoo_maps_SSt_MO}, 
we have the following corollary.

\begin{corollary}\label{cor:no Eoo map_p=2}
There does not exist an~$E_\infty$ map~$\rmH\bbF_2\to\bbSt$.
\end{corollary}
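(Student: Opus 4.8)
The plan is to derive this as a formal consequence of Proposition~\ref{prop:Baker} together with Example~\ref{ex:Eoo_maps_SSt_MO}. Suppose, for contradiction, that there exists an $E_\infty$ map $g\colon\rmH\bbF_2\to\bbSt$. By Example~\ref{ex:Eoo_maps_SSt_MO}, there is an $E_\infty$ map $s_{\MO}\colon\bbSt\to\MO$ (arising because $\ch(\MO)=2$ and Proposition~\ref{prop:maps_to_E} applies). Composing, the map $s_{\MO}\circ g\colon\rmH\bbF_2\to\MO$ is an $E_\infty$ map, contradicting Proposition~\ref{prop:Baker}. Hence no such $g$ exists.

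The only point requiring a word of care is that composition of $E_\infty$ maps is again an $E_\infty$ map at the level of homotopy categories, which is immediate once one works with the derived mapping spaces $\calE_\infty(-,-)$ as set up in the Conventions: the composite of homotopy classes of maps in the homotopy category of commutative $\bbS$-algebras is well-defined, and Proposition~\ref{prop:Baker} asserts the emptiness of $\calE_\infty(\rmH\bbF_2,\MO)$, i.e. the non-existence of any such homotopy class. So the contradiction is genuinely with the emptiness statement, not merely with some rigidified point-set map.

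I do not anticipate any real obstacle here; this is a short diagram-chase style deduction and the statement is labelled a corollary precisely because it falls out of Proposition~\ref{prop:Baker} with essentially no extra work. If anything, the only thing worth emphasising in the write-up is the logical shape: the existence of a map $\bbSt\to\MO$ is what transports the obstruction from $\MO$ back to $\bbSt$, and this existence is guaranteed structurally (any commutative $\bbS$-algebra of characteristic $2$ receives a map from $\bbSt$), so the argument is robust and does not depend on any special feature of $\MO$ beyond $\ch(\MO)=2$ and $\rmH\bbF_2\not\to\MO$.

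\begin{proof}
Suppose there were an $E_\infty$ map $\rmH\bbF_2\to\bbSt$. By Example~\ref{ex:Eoo_maps_SSt_MO}, there is an $E_\infty$ map $\bbSt\to\MO$, since $\ch(\MO)=2$. Composing the two would produce an $E_\infty$ map $\rmH\bbF_2\to\MO$, contradicting Proposition~\ref{prop:Baker}.
\end{proof}
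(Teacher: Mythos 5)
Your proof is correct and is exactly the paper's argument: the paper deduces the corollary from Proposition~\ref{prop:Baker} by composing a hypothetical map $\rmH\bbF_2\to\bbSt$ with the structure map $\bbSt\to\MO$ from Example~\ref{ex:Eoo_maps_SSt_MO}. Nothing further is needed.
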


Continuing the discussion in Example~\ref{ex:Mahowald}, the factorization of~$\rmS^1\rightarrow\rmQ(\rmS^1)$ over~$\Omega^2\rmS^3$ gives a map
\begin{equation}\label{eq:section}
\rmH\bbF_2\longrightarrow\bbSt 
\end{equation}
of commutative ring spectra up to homotopy. The existence of such a map is also clear from Theorem~\ref{thm:mod2isEM}.
The map~\eqref{eq:section} is a section of the truncation map~$\bbSt\to\rmH\bbF_2$. However, while the truncation map is~$E_\infty$, this section can not have an~$E_\infty$ representative. 

The first aim of this section is to generalize the preceding corollary to all primes.

\begin{theorem}\label{thm:no Eoo map}
There does not exist an~$E_\infty$ map~$\rmH\bbF_p\to\bbSp$.
\end{theorem}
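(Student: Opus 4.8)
The plan is to mimic the proof of Corollary~\ref{cor:no Eoo map_p=2} by producing, for each prime~$p$, a commutative~$\bbS$-algebra~$E$ that receives an~$E_\infty$ map from~$\bbSp$ but admits no~$E_\infty$ map from~$\rmH\bbF_p$. For~$p=2$ this role was played by~$\MO$ via Proposition~\ref{prop:Baker}. For odd~$p$ the obvious candidate is some chromatic analogue of~$\MO$—a $p$-local Thom spectrum realized along the lines of Theorems~\ref{thm:Thom_odd}, with an analogue of Proposition~\ref{prop:Baker} ruling out an~$\rmH\bbF_p$-structure. If such an input is available, the deduction is immediate: precompose the nonexistent map with the structure map, contradiction.

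If instead one wants a self-contained argument using only results in this excerpt, I would argue directly that an~$E_\infty$ map~$\rmH\bbF_p\to\bbSp$ cannot exist by a homology/Dyer-Lashof obstruction. First I would observe that by~\eqref{eq:H_iso} we have~$\rmH_*(\bbSp)\cong\rmH_*(\bbP\rmS^1)$, the free algebra over the Dyer-Lashof algebra on a single class~$a$ in degree~$1$. An~$E_\infty$ map~$\varphi\colon\rmH\bbF_p\to\bbSp$ would induce, on mod-$p$ homology, a map of algebras over the Dyer-Lashof algebra~$\rmH_*(\rmH\bbF_p)=\mathcal A_*\to\rmH_*(\bbSp)$ compatible with the respective unit maps from~$\rmH_*(\bbS)$. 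The key step is to analyze where the bottom classes of the dual Steenrod algebra can go: the class~$\xi_1$ (or~$\tau_0$ at odd primes) in~$\rmH_*(\rmH\bbF_p)$ is forced by the Hurewicz image and the Bockstein structure—recall from Example~\ref{ex:homology_Bockstein} that the homology Bockstein from degree~$1$ to~$0$ is an isomorphism on~$\rmH_*(\bbSp)$—to land on~$a$ up to a unit, and then Dyer-Lashof operations applied to~$\xi_1$ produce the whole of~$\mathcal A_*$, in particular forcing relations among the~$Q^i a$ that fail in the free object~$\rmH_*(\bbP\rmS^1)$. Concretely, in~$\mathcal A_*$ one has~$\xi_1^p$ expressible via~$\xi_2$ and the Dyer-Lashof action, whereas in the free algebra~$a^p$ and~$Q^1 a$, \dots are algebraically independent; comparing Poincaré series in the relevant degree (compare the count~$(\beta Q^1 a)^2$ versus~$\beta Q^2 a$ appearing in the proof of Proposition~\ref{prop:homotopy}) yields a contradiction.

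The main obstacle, and where I expect the real work to be, is making the comparison of Dyer-Lashof module structures precise enough to derive a genuine contradiction rather than merely a plausibility argument: one must pin down the image of the generating class of~$\rmH_*(\rmH\bbF_p)$ using only the constraints available (compatibility with the unit, the Bockstein, and~$\pi_*$ in low degrees via Proposition~\ref{prop:homotopy}), and then show that the subalgebra-over-the-Dyer-Lashof-algebra generated by that image is too large to fit inside the free algebra on one degree-$1$ generator. The cleanest route may well be the indirect one: cite the odd-primary analogue of Proposition~\ref{prop:Baker}—the nonexistence of an~$E_\infty$ map from~$\rmH\bbF_p$ into a suitable $p$-local Thom spectrum $\M_f$ that admits an $E_\infty$ map from $\bbSp$ by the construction of Theorem~\ref{thm:Thom_odd}—so that the proof reduces, exactly as for~$p=2$, to composing maps and invoking Corollary-style bookkeeping. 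I would present the indirect argument as the proof and remark on the homological obstruction as an alternative.
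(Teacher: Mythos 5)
Your primary route has a genuine gap: for odd $p$ it rests on ``the odd-primary analogue of Proposition~\ref{prop:Baker}'' for some unspecified $p$-local Thom spectrum receiving an $E_\infty$ map from $\bbSp$, but no such result is proved in the paper or cited, and you do not identify a candidate spectrum playing the role of $\MO$; presenting that conditional deduction ``as the proof'' just relocates the entire difficulty into an unproven hypothesis. The paper argues quite differently and uniformly in $p$: if $\rmH\bbF_p\to\bbSp$ existed, the composite with the truncation $\bbSp\to\rmH\pi_0(\bbSp)=\rmH\bbF_p$ would be an equivalence of commutative $\bbS$-algebras factoring through $\bbSp$; by functoriality of topological Andr\'e--Quillen homology and Theorem~\ref{thm:TAQ_SSp} one has $\TAQ^{\bbS}(\bbSp;\bbF_p)\simeq\Sigma\rmH\bbF_p$, concentrated in degree $1$, whereas $\TAQ^{\bbS}(\rmH\bbF_p;\bbF_p)$ is known (Kriz, Basterra--Mandell; see \cite{Lazarev}, \cite{Baker:free}) to be non-trivial in other degrees, a contradiction. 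Nothing in your write-up engages the cotangent-complex computation that the paper develops precisely for this purpose.

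Your fallback Dyer--Lashof argument is closer to something that could work (it is in the spirit of the proof of Proposition~\ref{prop:DyerLashof}), but as written the decisive steps are missing or shaky. First, pinning down the image of $\tau_0$ (resp.\ $\xi_1$) via ``the Bockstein structure'' runs straight into Example~\ref{ex:homology_Bockstein}: the identification \eqref{eq:H_iso} is an isomorphism of $\rmH\bbF_p$-algebras that does \emph{not} respect Bocksteins, so any use of the Bockstein on $\rmH_*(\bbSp)$ versus $\rmH_*(\bbP\rmS^1)$ must be justified, not assumed; the cleaner route is to compose with the truncation $\bbSp\to\rmH\bbF_p$, whose composite with your hypothetical map is homotopic to the identity of $\rmH\bbF_p$, giving split injectivity of $\rmH_*(\varphi)$ and hence $\xi_1\mapsto a$ (up to a unit) for free. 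Second, the contradiction needs a \emph{specific} relation in $\mathcal{A}_*$ as an algebra over the Dyer--Lashof algebra (Steinberger's computations in \cite{steinberger}, e.g.\ the degree-$4$ relation among $Q^3\xi_1$, $\xi_1Q^2\xi_1$, $\xi_1^4$ at $p=2$, and its odd-primary analogue) mapping to a non-relation among the corresponding admissible classes in the free object; your stated version is off at exactly this point, since at $p=2$ one has $Q^1a=a^2$ by the restriction axiom, so ``$a^p$ and $Q^1a$ are algebraically independent'' is false, and ``$\xi_1^p$ expressible via $\xi_2$ and the Dyer--Lashof action'' is not by itself a relation that contradicts freeness. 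Third, you would still need to check that the Dyer--Lashof action (as opposed to the Bockstein) is transported by the $\rmH\bbF_p$-algebra equivalence of Proposition~\ref{prop:A-algebra_equivalence}, so that $\rmH_*(\bbSp)$ really is free on a degree-$1$ class over the operations you use. With those three points supplied the homological argument would constitute a genuinely different proof from the paper's TAQ argument; without them it remains, as you say yourself, a plausibility sketch.
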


While we have already seen this to be true in the case~$p=2$, the following proof works for all primes.

\begin{proof}
Suppose there were such a map. Then the composition
\[
\rmH\bbF_p\longrightarrow\bbSp\longrightarrow\rmH\bbF_p
\]
with the truncation to~$\rmH\pi_0(\bbSp)=\rmH\bbF_p$ would be~$E_\infty$ and an equivalence, hence an equivalence of commutative~$\bbS$-algebras. Therefore it would induce an equivalence in topological Andr\'e-Quillen homology~$\TAQ^{\bbS}(?;\bbF_2)=\Omega_{\bbS}(?)\wedge_?\rmH\bbF_2$. However, by Theorem~\ref{thm:TAQ_SSp}, we have
\[
\TAQ^{\bbS}(\bbSt;\bbF_2)=\Omega_{\bbS}(\bbSt)\wedge_{\bbSt}\rmH\bbF_2\simeq
\Sigma\bbSt\wedge_{\bbSt}\rmH\bbF_2\simeq\Sigma\rmH\bbF_2,
\]
while~$\TAQ^{\bbS}(\rmH\bbF_2;\bbF_2)$ is known to be non-trivial in other dimensions as well. In fact, it has been completely computed in unpublished work of Kriz, and Basterra-Mandell. See~\cite{Lazarev} and~\cite{Baker:free} for the precise statements.
\end{proof}

\begin{remark}\label{rem:E2}
In light of the recent interest in~$E_n$ genera~\cite{Chadwick+Mandell}, the reader may wonder if there are maps~$\rmH\bbF_p\to\bbSp$ that are somewhat compatible with the~$E_\infty$ multiplications, but not entirely so. This would be~$E_n$ maps for some integer~$n$ such that~$1<n<\infty$. And indeed there are such maps: The~$E_2$ maps~\hbox{$\Omega^2\rmS^3\to\rmQ(\rmS^1)$} over~$\rmB\GL_1(\bbS_p)$ that extend the inclusion~$\rmS^1\to\rmQ(\rmS^1)$ induce~$E_2$ maps on the level of Thom spectra, and these are~$\rmH\bbF_p$ and~$\bbSp$, respectively, again by Hopkins' extension of Mahowald's theorem and Theorems~\ref{thm:Thom_2} and~\ref{thm:Thom_odd}. In particular, there are maps~$\rmH\bbF_p\to\bbSp$ of homotopy commutative ring spectra in the traditional sense of the words. 
\end{remark}

\begin{remark}
According to the preceding Remark~\ref{rem:E2}, there is an~$E_2$ map~$\rmH\bbF_p\to\bbSp$. The composition with the~$E_\infty$ map~$\rmH\bbZ\to\rmH\bbF_p$ gives rise to an~$E_2$ map~$\rmH\bbZ\to\bbSp$. In particular, the versal examples~$\bbSp$ are~$A_\infty$ algebras under the integral Eilenberg-Mac Lane spectrum~$\rmH\bbZ$. By work of Shipley~\cite{Shipley}, there is then a differential graded algebra~$A$ (even one over~$\bbF_p$) such that~$\bbSp$ and~$\rmH A$ are equivalent as~$A_\infty$ ring spectra. The homology of~$A$ is the homotopy of~$\bbSp$. And, there is also a Quillen equivalence between the category of~$\bbSp$-module spectra and the category of differential graded modules over that same differential graded algebra~$A$. Unfortunately, the differential graded algebra~$A$ that can be derived from the general results of~\cite{Shipley}, while explicit, is everything but small.
\end{remark}


\subsection{Exotic~\texorpdfstring{$E_\infty$}{Einfty} structures on graded Eilenberg-Mac Lane spectra}

By Theorem~\ref{thm:mod2isEM},~Theorem~\ref{thm:additive_implies_multiplicative}, and~Theorem~\ref{thm:steinberger:splitting}, the commutative~$\bbS$-algebras~$\MO$,~$\bbSt$, and~$\bbSp$ are equivalent, as homotopy commutative ring spectra, to graded Eilenberg-Mac Lane spectra with the Boardman multiplication.

\begin{theorem}\label{thm:non-uniqueness-for-structures}
  The multiplications on the commutative~$\bbS$-algebras~$\MO$,~$\bbSt$, and~$\bbSp$ are not~$E_\infty$ equivalent to the Boardman multiplications.
\end{theorem}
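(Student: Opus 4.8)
The approach is to play the three spectra off against the non-existence results of the previous subsection. The structural point about Boardman's multiplication is that, for a graded commutative ring $A_*$, the spectrum $\rmH A_*$ equipped with it arises from an $E_\infty$-monoid in $\rmH\bbZ$-modules by forgetting to $\bbS$-modules, and is in particular a commutative $\rmH A_0$-algebra, as recalled in the subsection on graded Eilenberg-Mac Lane spectra. For each of $\MO$, $\bbSt$, $\bbSp$ we have $A_0 = \pi_0 = \bbF_p$, with $p = 2$ for the first two, by Example~\ref{ex:Eoo_maps_SSt_MO} and Proposition~\ref{prop:pi0}. Hence $\rmH\pi_*(E)$ with the Boardman multiplication is a commutative $\rmH\bbF_p$-algebra, and so receives an $E_\infty$ map from $\rmH\bbF_p$, namely the unit of that algebra structure.

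The argument then proceeds in three steps. First, suppose that one of $E \in \{\MO, \bbSt, \bbSp\}$ were $E_\infty$ equivalent, as a commutative $\bbS$-algebra, to $\rmH\pi_*(E)$ with the Boardman multiplication. Second, compose such an equivalence with the unit $\rmH\bbF_p \to \rmH\pi_*(E)$ of the Boardman $\rmH\bbF_p$-algebra structure; this produces an $E_\infty$ map $\rmH\bbF_p \to E$ of commutative $\bbS$-algebras. Third, invoke the obstruction: there is no $E_\infty$ map $\rmH\bbF_2 \to \MO$ by Proposition~\ref{prop:Baker}, none $\rmH\bbF_2 \to \bbSt$ by Corollary~\ref{cor:no Eoo map_p=2}, and none $\rmH\bbF_p \to \bbSp$ by Theorem~\ref{thm:no Eoo map}. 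The resulting contradiction settles all three cases at once.

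I do not expect a substantive obstacle, since the real difficulty has already been discharged in Theorem~\ref{thm:no Eoo map} and Proposition~\ref{prop:Baker}, whose proofs rest on the topological André-Quillen computation of Theorem~\ref{thm:TAQ_SSp} and on power operations, respectively. The one point that warrants care is the direction of the comparison: the phrase ``$E_\infty$ equivalent to the Boardman multiplication'' must be read as an equivalence of commutative $\bbS$-algebras, and the leverage comes precisely from the asymmetry that the Boardman structure --- unlike the given $E_\infty$ structure on $\MO$, $\bbSt$, or $\bbSp$ --- is a structure \emph{under} $\rmH\bbF_p$, which the cited obstructions forbid.
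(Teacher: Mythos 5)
Your proposal is correct and is essentially the paper's own argument: the paper likewise observes that the Boardman multiplication would make these spectra commutative $\rmH\bbF_p$-algebras, hence would supply an $E_\infty$ structure map $\rmH\bbF_p\to E$, contradicting Proposition~\ref{prop:Baker}, Corollary~\ref{cor:no Eoo map_p=2}, and Theorem~\ref{thm:no Eoo map}. Your write-up just makes explicit the points (the $\rmH A_0$-algebra structure of $\rmH A_*$ and the composition with the unit) that the paper leaves implicit.
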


\begin{proof}
  Otherwise, these spectra would be commutative~$\rmH\bbF_p$-algebras for suitable prime numbers~$p$, and they would receive an~$E_\infty$-structure map from~$\rmH\bbF_p$, contradicting Proposition~\ref{prop:Baker}, Corollary~\ref{cor:no Eoo map_p=2}, or Theorem~\ref{thm:no Eoo map}, respectively.
\end{proof}

We see again that, while~$E_\infty$ structures on discrete Eilenberg-Mac Lane spectra are unique, this is not the case for graded Eilenberg-Mac Lane spectra, no matter what the prime in question is. 

\begin{proposition}
For all primes~$p$, the space~$\GL_1(\bbSp)$ of units in~$\bbSp$ is not a product of Eilenberg-Mac Lane spaces.
\end{proposition}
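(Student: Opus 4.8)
The strategy is to translate the statement into one about the units spectrum. Let $\mathrm{gl}_1(\bbSp)$ denote the connective spectrum with $\Omega^\infty\mathrm{gl}_1(\bbSp)=\GL_1(\bbSp)$. As a space, $\GL_1(\bbSp)$ is automatically a product of Eilenberg-Mac Lane spaces, since $\bbSp$ is a graded Eilenberg-Mac Lane spectrum and $\GL_1(\bbSp)$ is a union of components of $\Omega^\infty\bbSp$; so the content of the proposition lies in the infinite loop structure, and it is equivalent to the assertion that $\mathrm{gl}_1(\bbSp)$ is \emph{not} a graded Eilenberg-Mac Lane spectrum, i.e.\ not an $\rmH\bbZ$-module spectrum. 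First I would record that $\GL_1(\bbSp)\to\Omega^\infty(\bbSp)$ is an equivalence from each of its components onto the corresponding component of $\Omega^\infty(\bbSp)$, so that $\pi_n\mathrm{gl}_1(\bbSp)\cong\pi_n(\bbSp)$ for $n\geqslant1$, while $\pi_0\mathrm{gl}_1(\bbSp)=\bbF_p^\times$ has order $p-1$, which is prime to~$p$. Localizing at~$p$ therefore makes $\mathrm{gl}_1(\bbSp)$ into a $(4(p-1)-1)$-connected spectrum with $\pi_{4(p-1)}\cong\bbZ/p$, by Proposition~\ref{prop:homotopy}.

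Assume, towards a contradiction, that $\mathrm{gl}_1(\bbSp)$ is a graded Eilenberg-Mac Lane spectrum; then so is its $p$-localization, and, as everything in sight is connective of finite type with $p$-torsion homotopy, its mod~$p$ cohomology is a free module over the Steenrod algebra. I would derive a contradiction by showing that the Steenrod module $\rmH^*(\mathrm{gl}_1(\bbSp);\bbF_p)$ is \emph{not} free, using that it differs from the free Steenrod module $\rmH^*(\bbSp;\bbF_p)$ precisely by a correction term which encodes the $E_\infty$-Dyer-Lashof structure of $\bbSp$ and which is non-trivial. The relevant input is the identification~\eqref{eq:H_iso} of $\rmH_*(\bbSp)$ with the free algebra over the Dyer-Lashof algebra on a class $a$ in degree~$1$, together with the observation isolated in the proof of Proposition~\ref{prop:homotopy} that in degree $4(p-1)$ the Dyer-Lashof class $\beta Q^2a$ (respectively $Q^3a$ when $p=2$) is linearly independent from the square $(\beta Q^1a)^2$: it is exactly this non-decomposable operation that spoils freeness once one passes from $\bbSp$ to its units, where the multiplicative rather than the additive infinite loop structure on the components of $\Omega^\infty\bbSp$ is in force.

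\textbf{The main obstacle} will be the quantitative version of that last claim: to make precise how the Dyer-Lashof structure of the $E_\infty$-ring $\bbSp$ enters the Postnikov invariants — equivalently the Steenrod module structure of the cohomology — of $\mathrm{gl}_1(\bbSp)$, and to verify that the offending class does not die in the process. I would organise this either via Steinberger's computations of power operations on commutative $\bbS$-algebras, combined with the standard exponential comparison between the additive and the multiplicative infinite loop space structures (compare the power-operation discussion in Section~\ref{sec:Moore}), or — perhaps more transparently — via the realization of $\bbSp$ as an $E_\infty$ Thom spectrum in Theorems~\ref{thm:Thom_2} and~\ref{thm:Thom_odd} together with the available results on the units of Thom spectra. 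In either case the argument is arranged so that the concluding non-vanishing is nothing more than the linear independence already recorded in the proof of Proposition~\ref{prop:homotopy}.
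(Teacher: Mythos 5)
Your opening observation is a good one: as a bare space $\GL_1(\bbSp)$ \emph{is} a product of Eilenberg-Mac Lane spaces (it is a union of components of $\Omega^\infty\bbSp$, and $\bbSp$ is additively a graded Eilenberg-Mac Lane spectrum), so the proposition only has content when read through the multiplicative structure; the paper's own argument indeed uses exactly the H-space structure on the units coming from the ring multiplication. But your argument has a genuine gap, and it sits exactly at the point you label ``the main obstacle'': nothing in the proposal establishes that $\rmH^*(\mathrm{gl}_1(\bbSp);\bbF_p)$ fails to be free over the Steenrod algebra, i.e.\ that the class $\beta Q^2a$ (resp.\ $Q^3a$ at $p=2$) really produces a non-trivial k-invariant for the units \emph{spectrum}. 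Passing from the Dyer-Lashof action on $\rmH_*(\bbSp)$ to the Steenrod-module/Postnikov structure of $\mathrm{gl}_1(\bbSp)$ is not a routine ``correction term'': the homology of the spectrum $\mathrm{gl}_1(E)$ is not that of $E$ or of $\Omega^\infty E$, and neither Steinberger's operations nor the Thom-spectrum identifications of Theorems~\ref{thm:Thom_2} and~\ref{thm:Thom_odd} give, without substantial further input, any computation of $\mathrm{gl}_1(\bbSp)$ itself; a priori the linear independence of $\beta Q^2a$ and $(\beta Q^1a)^2$ in $\rmH_*(\bbSp)$ could fail to be visible in the multiplicative delooping. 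So what you have is a plausible plan whose decisive step is deferred, not a proof.

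For comparison, the paper's proof avoids the k-invariant question entirely and works one structural level down, using only the H-space structure: if $\GL_1(\bbSp)$ were a product of Eilenberg-Mac Lane spaces (compatibly with the multiplication), then $\rmK(\bbZ/p,4(p-1))$ would split off as a multiplicative retract carrying the generator of the first non-trivial higher homotopy group; by uniqueness of H-structures on this Eilenberg-Mac Lane space its Pontryagin ring is Cartan's divided power algebra, so the fundamental class is nilpotent, and pushing it into $\rmH_*(\bbSp)$ along the multiplication-respecting map $\Sigma^\infty_+\GL_1(\bbSp)\to\bbSp$ gives a non-zero (the Hurewicz map of $\bbSp$ is injective by Proposition~\ref{prop:poincare}) nilpotent class in positive even degree, contradicting the freeness of $\rmH_*(\bbSp)\cong\rmH_*(\bbP\rmS^1)$ as a graded-commutative algebra. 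If you want to rescue your route, you would have to actually compute enough of $\rmH^*(\mathrm{gl}_1(\bbSp);\bbF_p)$ as a module over the Steenrod algebra, or the first k-invariant of $\mathrm{gl}_1(\bbSp)$ above degree $4(p-1)$ --- precisely the step your proposal postpones.
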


\begin{proof}
Recall from Proposition~\ref{prop:homotopy} that the first non-trivial homotopy group of~$\bbSp$ appears in dimension~$4(p-1)$ and is isomorphic to~$\bbZ/p$. Therefore, if~$\GL_1(\bbSp)$ were a product of Eilenberg-Mac Lane spaces, then there would be a splitting
\[
\GL_1(\bbSp)\simeq \rmK(\bbZ/p,4(p-1))\times L
\]
with one of the factors the corresponding Eilenberg-Mac Lane space, such that a generator of the homotopy group~$\pi_{4(p-1)}(\bbSp)$ corresponds to a fundamental class of~$\rmK(\bbZ/p,4(p-1))$. The H-spaces structure on the space of units that comes from the multiplication on~$\bbSp$ induces an H-spaces structure on the retract~$\rmK(\bbZ/p,4(p-1))$. But, there is only one H-space structure on this Eilenberg-Mac Lane space, the standard one. It is known from Cartan's computation of the homology rings of Eilenberg-Mac Lane spectra that~\hbox{$\rmH_*(\rmK(\bbZ/p,4(p-1));\bbZ/p)$} has a divided power structure. Since the characteristic is prime, every element of positive degree has to be nilpotent. But, the composition
\[
\Sigma^\infty_+(\rmK(\bbZ/p,4(p-1)))\longrightarrow\Sigma^\infty_+(\GL_1(\bbSp))\longrightarrow\bbSp
\]
induces an isomorphism in homology in degree~\hbox{$4(p-1)$}, and it respects the multiplications. Therefore, the image of a fundamental class would be a non-trivial nilpotent element in even positive degrees, in contradiction to the fact that~$\rmH_*(\bbSp)$ is free.
\end{proof}

The analog of the preceding proposition also holds for~$\MO$, with essentially the same proof, as explained to me by Tyler Lawson.


\subsection{The structure map of unoriented bordism}

Recall, from Example~\ref{ex:Eoo_maps_SSt_MO}, that there is an~$E_\infty$ map~$s\colon\bbSt\to\MO$ which is unique up to homotopy of~$E_\infty$ maps.

\begin{proposition}\label{prop:DyerLashof}
The structure map~$s\colon\bbSt\to\MO$ is not injective in homology and homotopy.
\end{proposition}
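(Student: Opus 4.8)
The plan is to exploit the homology computation $\rmH_*(\bbSt) \cong \rmH_*(\bbP\rmS^1)$ from \eqref{eq:H_iso}, which identifies the mod-$2$ homology of $\bbSt$ with the free algebra over the Dyer-Lashof algebra on a single generator $a$ in degree $1$. First I would recall that by Steinberger (Theorem~\ref{thm:steinberger:splitting}) both $\bbSt$ and $\MO$ are graded Eilenberg-Mac Lane spectra, so by Proposition~\ref{prop:poincare} the Hurewicz map $\pi_*(\bbSt) \to \rmH_*(\bbSt)$ is injective with image a polynomial (more precisely: a well-understood) subalgebra, and likewise for $\MO$, where $\rmH_*(\MO) \cong \bbF_2[a_1, a_2, \dots]$ with $|a_i| = i$ and $a_i$ detecting the Hurewicz image iff $i \neq 2^k - 1$. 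So to show $s$ is not injective in homology it suffices to exhibit a nonzero class in $\rmH_*(\bbSt)$ that dies in $\rmH_*(\MO)$, and then the statement about homotopy follows by the commuting square relating Hurewicz maps, provided the class we choose is (or can be replaced by) one in the Hurewicz image.

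The key point is a comparison of Dyer-Lashof module structures. The generator $a \in \rmH_1(\bbSt)$ maps to a generator of $\rmH_1(\MO) = \bbF_2$, which one identifies with the class $a_1$ dual to $w_1$. Now $s$ is an $E_\infty$ map, hence commutes with Dyer-Lashof operations, so $Q^i a$ maps to $Q^i a_1$ in $\rmH_*(\MO)$. The homology of $\MO$ as a module over the Dyer-Lashof algebra is known (from the Thom-isomorphism description and the Dyer-Lashof action on $\rmH_*(\BO)$, or from $\MO \simeq \rmH\bbF_2 \wedge (\text{something})$): the operations $Q^i a_1$ are expressible in terms of the polynomial generators, and crucially the map from the free Dyer-Lashof algebra on one degree-$1$ class to $\rmH_*(\MO)$ is far from injective because $\rmH_*(\MO)$ has polynomial Poincaré series while $\rmH_*(\bbP\rmS^1)$ grows much faster. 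Concretely, I would identify an explicit low-degree relation: a nonzero iterated Dyer-Lashof expression on $a$ — e.g.\ a combination involving $Q^2 Q^1 a$ and products of lower operations — whose image under $s_*$ vanishes in $\rmH_*(\MO)$, using the known formulas for $Q^i$ acting on $\rmH_*(\BO)$ (Cartan formula plus the instability/Nishida relations). Since $\rmH_*(\MO)$ is the free commutative algebra on the $a_i$ while $\rmH_*(\bbSt)$ is the free \emph{Dyer-Lashof} algebra on $a$, a dimension count in the first few degrees already forces a kernel element.

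For the homotopy statement, I would locate such a kernel class inside the Hurewicz image of $\pi_*(\bbSt)$. By Proposition~\ref{prop:homotopy}, $\pi_4(\bbSt) \cong \bbZ/2$ (here $4(p-1) = 4$), and its generator is detected in homology by $\beta Q^2 a$ — a class that is \emph{not} in the subalgebra generated by $a$ and $\beta Q^1 a = \xi_1$-analogue, as noted in the proof of Proposition~\ref{prop:homotopy}. I would then check that the image of this generator in $\pi_4(\MO) = 0$ is forced to be zero (indeed $\pi_4(\MO) \cong \bbZ/2 \oplus \bbZ/2$ generated by $[\bbR P^2]^2$ and $[\bbR P^4]$ — one must verify the Hurewicz image of the $\bbSt$-class lands in the part killed, i.e.\ that $s_*(\beta Q^2 a) = 0$ in $\rmH_4(\MO)$ because $\beta Q^2 a_1$ reduces to a decomposable that is already hit by lower products, hence is not a new generator and the corresponding homotopy class must vanish by the Hurewicz comparison). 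The main obstacle I anticipate is precisely this last verification: pinning down exactly which element of $\pi_*(\bbSt)$ maps to zero requires having the Dyer-Lashof action on $\rmH_*(\MO)$ — equivalently on $\rmH_*(\BO)$ through the Thom isomorphism — at one's fingertips, together with the identification of the Hurewicz image in both source and target; the combinatorics of which admissible monomials $Q^I a$ become decomposable or zero in $\rmH_*(\MO)$ is the technical heart of the argument, and choosing the degree where the collision first occurs (I expect degree $3$ or $4$) is what makes it short.
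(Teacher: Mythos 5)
Your overall strategy is the paper's: use that $s$ is an $E_\infty$ map and hence commutes with Dyer-Lashof operations, that $\rmH_*(\bbSt)$ is the free Dyer-Lashof algebra on a degree-one class $a$, and the known operations on $\rmH_*(\MO)\cong\rmH_*(\BO)$ (Kochman, Priddy, via Lewis's theorem that the Thom isomorphism commutes with the operations) to produce a kernel class, then pass to homotopy via the Hurewicz map. But there are two genuine gaps, at exactly the load-bearing points. First, you never exhibit the kernel element, and the fallback you offer in its place fails: the claim that ``a dimension count in the first few degrees already forces a kernel element'' because $\rmH_*(\bbP\rmS^1)$ grows much faster is wrong in the relevant range. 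Through degree $4$ the free Dyer-Lashof algebra on $a$ has dimensions $1,1,1,2,3$ (generators $a$, $Q^2a$, $Q^3a$ in degrees $1,3,4$), whereas $\rmH_*(\BO)\cong\rmH_*(\MO)$, polynomial with one generator in every positive degree, has dimensions $1,1,2,3,5$; the target is strictly larger there, so no counting argument produces a kernel in low degrees, let alone in degree $4$, which is the degree you need for the homotopy statement. What actually does the work in the paper is the specific relation $Q^3e_1=e_1^4$ in $\rmH_*(\BO)$: since $s_*a=e_1$, the linearly independent classes $Q^3a$ and $a^4$ in $\rmH_4(\bbSt)$ have the same image, so $Q^3a+a^4$ is a nonzero class killed by $s_*$. (Also, if you transport the Dyer-Lashof structure along the isomorphism \eqref{eq:H_iso} rather than along the Thom-spectrum description, you should say why this is legitimate---it is, because that isomorphism comes from an equivalence of commutative $\rmH\bbF_2$-algebras---since Example~\ref{ex:homology_Bockstein} warns that homology operations cannot be transported blindly.)

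Second, the homotopy statement is not proved: you explicitly leave open the verification that the Hurewicz image of the generator of $\pi_4(\bbSt)\cong\bbZ/2$ is annihilated by $s_*$, calling it the main obstacle. The paper disposes of this by using that both spectra are generalized Eilenberg-Mac Lane spectra, so homotopy embeds in homology as the (comodule) primitives and the degree-four homology computation above settles it. Your parenthetical at this point is also internally inconsistent: you write $\pi_4(\MO)=0$ and in the same sentence $\pi_4(\MO)\cong\bbZ/2\oplus\bbZ/2$; the latter is correct (generated by $[\bbR\mathrm{P}^2]^2$ and $[\bbR\mathrm{P}^4]$), and the whole point is that the image of the $\bbSt$-class vanishes even though $\pi_4(\MO)$ does not. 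Finally, the odd-primary notation $\beta Q^2a$, $\beta Q^1a$ is out of place at $p=2$: the relevant classes there are $Q^3a$ and $a^4=Q^2Q^1a$.
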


\begin{proof}
In~\cite[IX.7.4]{lewis}, Lewis has shown that the Thom isomorphism commutes with the Araki-Kudo-Dyer-Lashof operations. This reduces the statement about the homology to the same question about the map~$\rmQ(\rmS^1)\to\BO$. The homology of~$\rmQ(\rmS^1)$ is the free algebra over the Araki-Kudo-Dyer-Lashof algebra on one generator, say~$a$, in degree~$1$, see~\cite[II.4]{cohen+lada+may}. The homology of~$\BO$ is polynomial on generators~$e_1,e_2,\dots$ with~$\dim(e_j)=j$, and the operations have been computed in~\hbox{\cite[p.~133]{Kochman}} and~\cite{Priddy}. In particular, we know that~$Q^3e_1=e_1^4$. By definition of~$s$, we also have~$s_*a=e_1$, so that the different elements~$Q^3a$ and~$a^4$ are both mapped to the same element~$e_1^4$. The proves the non-injectivity of the map in homology.

The statement for homotopy follows from immediately from the statement for homology and the fact that the homotopy embeds into the homology as the primitive elements.
\end{proof}

The preceding proof gives slightly more information: In positive dimensions, the first non-trivial element in the higher homotopy of~$\bbSt$, which lives in dimension~$4$ by Proposition~\ref{prop:homotopy}, is mapped to zero in~$\pi_4(\MO)$. 

\begin{corollary}
The structure map~$\bbSt\to\MO$ does not admit a retraction.
\end{corollary}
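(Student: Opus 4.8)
The plan is to derive this as a direct consequence of the preceding Proposition~\ref{prop:DyerLashof}. Suppose, for contradiction, that the structure map $s\colon\bbSt\to\MO$ admitted a retraction $r\colon\MO\to\bbSt$, i.e.~a map of (commutative $\bbS$-algebras, or at least of spectra) with $r\circ s\simeq\id_{\bbSt}$. Then the composite $r_*\circ s_*$ would be the identity on $\pi_*(\bbSt)$ and on $\rmH_*(\bbSt)$. In particular $s_*$ would have to be injective on homotopy and on homology, contradicting Proposition~\ref{prop:DyerLashof}. So no retraction exists.

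The only subtlety worth spelling out is the category in which the retraction is supposed to live. If ``retraction'' is meant in the category of spectra (or of commutative $\bbS$-algebras, or of $\bbSt$-algebras --- in every case the underlying map of spectra is what matters), then applying $\pi_*(-)$ or $\rmH_*(-;\bbF_2)$ functorially immediately yields that $s_*$ is a split monomorphism, hence injective. This is exactly what Proposition~\ref{prop:DyerLashof} rules out: there we saw that the distinct elements $Q^3a$ and $a^4$ in $\rmH_4(\bbSt;\bbF_2)$ both map to $e_1^4$ under $s_*$, and that (in positive degrees) the first nontrivial homotopy class of $\bbSt$, in dimension $4$, dies in $\pi_4(\MO)$.

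I expect no real obstacle here; the statement is a formal corollary. The one thing to be careful about is simply to invoke the correct half of Proposition~\ref{prop:DyerLashof} --- either the homology non-injectivity or the homotopy non-injectivity suffices on its own --- and to note that a retraction, being a one-sided inverse, forces the relevant induced map on invariants to be injective. I would phrase the proof in two sentences to that effect.

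\begin{proof}
A retraction would in particular be a map of spectra $r\colon\MO\to\bbSt$ with $r\circ s\simeq\id_{\bbSt}$, so that the induced map $s_*$ on homotopy groups (and likewise on mod~$2$ homology) would be a split monomorphism, hence injective. This contradicts Proposition~\ref{prop:DyerLashof}.
\end{proof}
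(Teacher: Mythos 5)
Your argument is correct and is exactly the paper's: the corollary is a formal consequence of Proposition~\ref{prop:DyerLashof}, since a retraction would make $s_*$ a split monomorphism on homotopy (or mod~$2$ homology), and the paper likewise notes it holds already in the homotopy category of spectra and \emph{a fortiori} for commutative $\bbS$-algebras.
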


This is clearly true in the homotopy category spectra, and {\it a fortiori} in that of commutative~$\bbS$-algebras. In the latter, even more is true:

\begin{proposition}\label{prop:no_MO->SSp}
There does not exist an~$E_\infty$ map~$\MO\to\bbSt$.
\end{proposition}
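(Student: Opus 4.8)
The plan is to argue by contradiction and to exploit the rigidity of $E_\infty$ self-maps of $\bbSt$. So I would begin by assuming that there is an $E_\infty$ map $g\colon\MO\to\bbSt$, and recall from Example~\ref{ex:Eoo_maps_SSt_MO} that $\MO$ is a commutative $\bbSt$-algebra, i.e.\ that there is an $E_\infty$ structure map $s\colon\bbSt\to\MO$. The key observation is then that the composite $g\circ s\colon\bbSt\to\bbSt$ is an $E_\infty$ self-map of $\bbSt$, so by Corollary~\ref{cor:id_S} it must be $E_\infty$ homotopic to the identity of $\bbSt$. In particular, forgetting down to spectra, $g$ is a retraction of the structure map $s$ up to homotopy.

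To finish, I would invoke Proposition~\ref{prop:DyerLashof}, according to which $s$ is not injective on homotopy groups. A map admitting a retraction is injective on homotopy groups, since $(g\circ s)_*=g_*\circ s_*$ would be the identity on $\pi_*(\bbSt)$; this forces $s_*$ to be injective, a contradiction. Hence no $E_\infty$ map $\MO\to\bbSt$ can exist. (The same contradiction may be extracted from the non-injectivity of $s$ in homology, or directly from the fact recorded just before the statement that $s$ admits no retraction of commutative $\bbS$-algebras.)

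There is essentially no obstacle here beyond bookkeeping: one only needs that $s$ is genuinely an $E_\infty$ map (true by construction of the structure map) and that $g$ is $E_\infty$ by hypothesis, so that $g\circ s$ falls under the scope of Corollary~\ref{cor:id_S}. The argument runs parallel in spirit to the proof of Proposition~\ref{prop:not_initial}, where the same rigidity statement was used to pin down a hypothetical initial object as $\bbSp$; the new input here is merely that we already know, via Proposition~\ref{prop:DyerLashof}, that $s$ is not split even on the level of homotopy groups.
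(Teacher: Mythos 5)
Your proposal is correct and coincides with the paper's own argument: compose the hypothetical $E_\infty$ map with the structure map $s\colon\bbSt\to\MO$, use Corollary~\ref{cor:id_S} to identify the composite with the identity, and contradict the fact (from Proposition~\ref{prop:DyerLashof}) that $s$ admits no retraction. The only cosmetic difference is that the paper cites the intermediate corollary on the non-existence of a retraction, whereas you inline its derivation from the non-injectivity of $s$ on homotopy groups.
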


\begin{proof}
Otherwise, the composition with the structure map~$\bbSt\to\MO$ would be an~$E_\infty$ self-map of~$\bbSt$. By Corollary~\ref{cor:id_S}, this composition would be homotopic (even as~$E_\infty$ maps) to the identity. Therefore, the hypothetical map would be a retraction for the structure map, contradicting the preceding corollary.
\end{proof}

In particular, the truncation~$\MO\to\rmH\bbF_2$ does not factor through~$\bbSt$ as an~$E_\infty$ map. Here is another consequence of the preceding proposition.

\begin{corollary}
The commutative~$\bbSt$-algebra~$\MO$ is not free (or `polynomial'), i.e.~it is not equivalent to one of the form~$\bbSt\wedge\bbP X$ for some spectrum~$X$.
\end{corollary}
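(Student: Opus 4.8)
The plan is to reduce everything to Proposition~\ref{prop:no_MO->SSp}, which says that there is no $E_\infty$ map $\MO\to\bbSt$. So suppose, for contradiction, that $\MO\simeq\bbSt\wedge\bbP X$ as a commutative $\bbSt$-algebra for some spectrum $X$. The first step is to observe that any free commutative $\bbSt$-algebra of this shape carries a canonical augmentation to $\bbSt$: the collapse map $X\to *$ induces an $E_\infty$ map $\bbP X\to\bbP *=\bbS$ of commutative $\bbS$-algebras, and smashing with $\bbSt$ over $\bbS$ yields an $E_\infty$ map
\[
\bbSt\wedge\bbP X\longrightarrow\bbSt\wedge\bbS=\bbSt
\]
of commutative $\bbSt$-algebras. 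Equivalently, under the base-change identification $\bbSt\wedge\bbP X\simeq\bbP_{\bbSt}(\bbSt\wedge X)$ of free commutative $\bbSt$-algebras and the resulting adjunction $\calE_\infty^{\bbSt}(\bbSt\wedge\bbP X,E)\cong\calS_\infty(X,E)$, this augmentation is simply the $\bbSt$-algebra map corresponding to the zero map $X\to\bbSt$, so it exists no matter what $X$ is.

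The second step is to transport this augmentation along the assumed equivalence $\MO\simeq\bbSt\wedge\bbP X$, producing an $E_\infty$ map $\MO\to\bbSt$ of commutative $\bbSt$-algebras. Forgetting structure along the unit $\bbS\to\bbSt$, every $\bbSt$-algebra map is in particular an $\bbS$-algebra map, so we obtain an $E_\infty$ map $\MO\to\bbSt$ of commutative $\bbS$-algebras. This contradicts Proposition~\ref{prop:no_MO->SSp}, and the corollary follows.

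The argument is essentially formal once Proposition~\ref{prop:no_MO->SSp} is available, so the only point I would take care to spell out — and the one closest to a genuine step rather than a triviality — is the base-change identification $\bbSt\wedge\bbP X\simeq\bbP_{\bbSt}(\bbSt\wedge X)$ together with the adjunctions $\calE_\infty^{\bbSt}(\bbP_{\bbSt}M,E)\cong\mathrm{Mod}_{\bbSt}(M,E)$ and $\mathrm{Mod}_{\bbSt}(\bbSt\wedge X,E)\cong\calS_\infty(X,E)$; these are standard (see~\cite{EKMM}), and they are what guarantees both that the augmentation exists for arbitrary $X$ and that it is genuinely a map of commutative ring spectra rather than merely of underlying spectra. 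No homotopical input beyond Proposition~\ref{prop:no_MO->SSp} is needed.
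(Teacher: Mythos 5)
Your argument is correct and is essentially the paper's own proof: both use the collapse map $X\to\star$ to produce an augmentation $\MO\simeq\bbSt\wedge\bbP X\to\bbSt\wedge\bbP\star\simeq\bbSt$ and then invoke Proposition~\ref{prop:no_MO->SSp}. The extra remarks on base change and the free--forgetful adjunctions are fine but not needed beyond what the paper states.
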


\begin{proof}
Otherwise~$X\to\star$ would induce an~$E_\infty$ map
\[
\MO\simeq\bbSt\wedge\bbP X\longrightarrow\bbSt\wedge\bbP\star\simeq\bbSt\wedge\bbS\simeq\bbSt,
\]
in contradiction to Proposition~\ref{prop:no_MO->SSp}.
\end{proof}

All these results demonstrate that the picture suggested by Thom's computation
\[
\pi_*(\MO)\cong\bbF_2[x_2,x_4,x_5,x_6,x_8,\dots]
\]
of the homotopy ring is misleading when it comes to understanding~$\MO$ itself as an~$E_\infty$ ring spectrum.


\section*{Acknowledgments}

This research has been supported by the Danish National Research Foundation (DNRF) through the Centre for Symmetry and Deformation. I would also like to thank Andy Baker, Andrew Blumberg, John Francis, Mike Hopkins, Gerd Laures, Tyler Lawson, Thomas Nikolaus, Justin Noel, Birgit Richter, and Stefan Schwede for discussions related to this work, and the referees for their detailed reports.



\vfill

\parbox{\linewidth}{%
Department of Mathematical Sciences\\
University of Copenhagen\\
Universitetsparken 5\\
2100 Copenhagen \O\\
DENMARK\\
\phantom{ }\\
\href{mailto:szymik@math.ku.dk}{szymik@math.ku.dk}\\
\href{http://www.math.ku.dk/~xvd217}{www.math.ku.dk/$\sim$xvd217}
}


\begin{thebibliography}{MMMM00}

\bibitem[Ang08]{Angeltveit} V. Angeltveit. Topological Hochschild homology and cohomology of~$A_\infty$ ring spectra. Geom. Topol. 12 (2008) 987--1032.

\bibitem[ABGHR]{ABGHR}
M. Ando, A.J. Blumberg, D.J. Gepner, M.J. Hopkins, C. Rezk.
An~$\infty$-categorical approach to~$R$-line bundles,~$R$-module Thom spectra, and twisted~$R$-homology. To appear in: J. Topol. 

\bibitem[AT65]{araki+toda:1} S. Araki, H. Toda. 
Multiplicative structures in mod~$q$ cohomology theories. I. 
Osaka J. Math. 2 (1965) 71--115.

\bibitem[AT66]{araki+toda:2} S. Araki, H. Toda. 
Multiplicative structures in mod~$q$ cohomology theories. II. 
Osaka J. Math. 3 (1966) 81--120.
 
\bibitem[Ast97]{astey} L. Astey. Commutative~$2$-local ring
  spectra. Proc. Roy. Soc. Edinburgh Sect. A 127 (1997) 1--10.
  
\bibitem[Bak]{Baker:BP} A. Baker.~$BP$: Close encounters of the~$E_\infty$ kind.
To appear in:  J. Homotopy Relat. Struct.\\
\href{http://dx.doi.org/10.1007/s40062-013-0051-6}{http://dx.doi.org/10.1007/s40062-013-0051-6}
  
\bibitem[Bak']{Baker:free} A. Baker. Calculating with topological Andr\'e-Quillen theory, I: Homotopical properties of universal derivations and free commutative~$S$-algebras.\\
\href{http://arxiv.org/abs/1208.1868}{http://arxiv.org/abs/1208.1868}

\bibitem[BGR08]{Baker+Gilmour+Reinhard} A. Baker, H. Gilmour, P. Reinhard.
Topological Andr\'e-Quillen homology for cellular commutative~$S$-algebras.
Abh. Math. Semin. Univ. Hambg. 78 (2008) 27--50. 

\bibitem[BM04]{Baker+May} A. Baker, J.P. May. Minimal atomic complexes. Topology 43 (2004) 645--665.

\bibitem[BR08]{Baker+Richter:connective} A. Baker, B. Richter.
Uniqueness of~$E_\infty$ structures for connective covers.
Proc. Amer. Math. Soc. 136 (2008) 707--714.

\bibitem[BR]{Baker+Richter:Thom} A. Baker, B. Richter. Some properties of the Thom spectrum over loop suspension of complex projective space. To appear in: Contemporary Mathematics volume Proceedings of the Fourth Arolla Conference on Algebraic Topology.\\
\href{http://arxiv.org/abs/1207.4947}{http://arxiv.org/abs/1207.4947}

\bibitem[Bas99]{Basterra} M. Basterra.
Andr\'e-Quillen cohomology of commutative~$S$-algebras.
J. Pure Appl. Algebra 144 (1999) 111--143. 

\bibitem[BM05]{Basterra+Mandell} M. Basterra, M.A. Mandell. Homology and cohomology of~$E_\infty$ ring spectra. Math. Z. 249 (2005) 903--944.

\bibitem[BM72]{bass+madsen} N.A. Baas, I. Madsen.
On the realization of certain modules over the Steenrod algebra. 
Math. Scand. 31 (1972) 220--224. 

\bibitem[Blu10]{Blumberg} A.J. Blumberg. Topological Hochschild homology of Thom spectra which are~$E_\infty$ ring spectra. J. Topol. 3 (2010) 535--560.

\bibitem[Boa80]{boardman} J.M. Boardman. Graded Eilenberg-Mac Lane ring spectra. Amer. J. Math. 102 (1980) 979--1010. 

\bibitem[BMMS86]{steinberger} R.R. Bruner, J.P. May, J.E. McClure,
  M. Steinberger.~$H\sb \infty$ ring spectra and their
  applications. Lecture Notes in Mathematics 1176. Springer-Verlag,
  Berlin, 1986.
  
\bibitem[CM]{Chadwick+Mandell} S.G. Chadwick, M.A. Mandell.~$E_n$ Genera.\\
\href{http://arxiv.org/abs/1310.3336}{http://arxiv.org/abs/1310.3336}

\bibitem[CLM76]{cohen+lada+may} F.R. Cohen, T.J. Lada, J.P. May. The
  homology of iterated loop spaces. Lecture Notes in Mathematics
  533. Springer-Verlag, Berlin-New York, 1976.
  
\bibitem[EKMM97]{EKMM} A.D. Elmendorf, I. Kriz, M.A. Mandell, J.P. May. Rings, modules, and algebras in stable homotopy theory. With an appendix by M. Cole. Mathematical Surveys and Monographs, 47. American Mathematical Society, Providence, RI, 1997.
  
\bibitem[HKM01]{Hu+Kriz+May} P. Hu, I. Kriz, J.P. May. Cores of spaces, spectra, and~$E_\infty$ ring spectra. Equivariant stable homotopy theory and related areas (Stanford, CA, 2000). Homology Homotopy Appl. 3 (2001) 341--354.

\bibitem[Koc73]{Kochman}  S.O. Kochman. Homology of the classical groups over the Dyer-Lashof algebra. Trans. Amer. Math. Soc. 185 (1973) 83--136.

\bibitem[Kuh06]{Kuhn} N.J. Kuhn. Localization of Andr\'e-Quillen-Goodwillie towers, and the periodic homology of infinite loopspaces. Adv. Math. 201 (2006) 318--378.

\bibitem[Laz04]{Lazarev} A. Lazarev. Cohomology theories for highly structured ring spectra. Structured ring spectra, 201--231, London Math. Soc. Lecture Note Ser., 315, Cambridge Univ. Press, Cambridge, 2004.

\bibitem[LMS86]{lewis} L.G. Lewis, J.P. May, M. Steinberger,
  J.E. McClure. Equivariant stable homotopy theory. Lecture Notes in
  Mathematics 1213. Springer-Verlag, Berlin, 1986.

\bibitem[Mah79]{mahowald} M. Mahowald. Ring spectra which are Thom
  complexes. Duke Math. J. 46 (1979) 549--559.
  
\bibitem[MRS01]{Mahowald+Ravenel+Shick} M. Mahowald, D.C. Ravenel, P. Shick.
The Thomified Eilenberg-Moore spectral sequence. 
Cohomological methods in homotopy theory (Bellaterra, 1998), 249--262. Progr. Math., 196, Birkh\"auser, Basel, 2001. 
  
\bibitem[MMSS01]{MMSS}  M.A. Mandell, J. P. May, S. Schwede, B. Shipley.
Model categories of diagram spectra.
Proc. London Math. Soc. 82 (2001) 441--512.

\bibitem[PR85]{pazhitnov+rudyak} A.V. Pazhitnov, Y.B. Rudyak.
Commutative ring spectra of characteristic~$2$.
Math. USSR-Sb. 52 (1985) 471--479.

\bibitem[Pri75]{Priddy} S. Priddy. Dyer-Lashof operations for the classifying spaces of certain matrix groups. Quart. J. Math. Oxford Ser. 26 (1975) 179--193.

\bibitem[Ric03]{Richter:Symmetry} B. Richter. Symmetry properties of the Dold-Kan correspondence. Math. Proc. Cambridge Philos. Soc. 134 (2003) 95--102.

\bibitem[Ric06]{Richter:Homotopy} B. Richter. Homotopy algebras and the inverse of the normalization functor. J. Pure Appl. Algebra 206 (2006) 277--321.

\bibitem[Rog08]{rognes:galois} J. Rognes. 
Galois extensions of structured ring spectra. Stably dualizable groups.
Mem. Amer. Math. Soc. 192 (2008) 898.

\bibitem[Rud98]{rudyak} Y.B. Rudyak. On Thom spectra, orientability, and cobordism. Springer-Verlag, Berlin, 1998.
  
\bibitem[SVW98]{SVW} R. Schw\"anzl, R.M. Vogt, F. Waldhausen. Adjoining roots of unity to~$E_\infty$ ring spectra in good cases---a remark. Homotopy invariant algebraic structures (Baltimore, MD, 1998) 245--249. Contemp. Math. 239, Amer. Math. Soc., Providence, RI, 1999.

\bibitem[Shi07]{Shipley} B. Shipley.~$\rmH\bbZ$-algebra spectra are differential graded algebras. Amer. J. Math. 129 (2007) 351--379.

\bibitem[Szy]{Szymik} M. Szymik. String bordism and chromatic characteristics.\\
\href{http://arxiv.org/abs/1312.4658}{http://arxiv.org/abs/1312.4658}

\bibitem[W{\"u}r86]{würgler} U. W\"urgler. Commutative ring-spectra of
  characteristic~$2$. Comment. Math. Helv. 61 (1986) 33--45.
  
\bibitem[Yan92]{yan:proc} D.Y. Yan. On the Thom spectra over~$\Omega(\mathrm{SU}(n)/\mathrm{SO}(n))$ and Maho\-wald's~$X_k$ spectra. Proc. Amer. Math. Soc. 116 (1992) 567--573.
  
\bibitem[Yan94]{yan:trans} D.Y. Yan. The Brown-Peterson homology of Mahowald's~$X_k$ spectra. Trans. Amer. Math. Soc. 344 (1994) 261--289.

\end{thebibliography}
\end{document}